\pdfoutput=1
\RequirePackage[l2tabu, orthodox]{nag}
\documentclass{amsart}
\usepackage[T1]{fontenc}
\usepackage[utf8]{inputenc}

      \title{Assembly maps for topological cyclic homology of group algebras}

     \author{Wolfgang Lück}
    \address{HIM (Hausdorff Research Institute for Mathematics)
             \&\ Mathematisches Institut, Rheinische Friedrich-Wilhelms-Universität Bonn, Germany}
      \email{\hemail{wolfgang.lueck@him.uni-bonn.de}}
    \urladdr{\hurl{him.uni-bonn.de/lueck/}}

     \author{Holger Reich}
    \address{Institut für Mathematik, Freie Universität Berlin, Germany}
      \email{\hemail{holger.reich@fu-berlin.de}}
    \urladdr{\hurl{mi.fu-berlin.de/math/groups/top/members/Professoren/reich.html}}

     \author{John Rognes}
    \address{Department of Mathematics, University of Oslo, Norway}
      \email{\hemail{rognes@math.uio.no}}
    \urladdr{\hurl[]{folk.uio.no/rognes/home.html}}

     \author{Marco Varisco}
    \address{Department of Mathematics and Statistics, University at Albany, SUNY, USA}
      \email{\hemail{mvarisco@albany.edu}}
    \urladdr{\hurl{albany.edu/~mv312143/}}

  \subjclass[2010]{\MSC{19D55}, \MSC{55P91}, \MSC{55P42}, \MSC{18G60}}

       \date{April 19, 2017}



\usepackage{amssymb}
\usepackage{aliascnt}
\usepackage{mathtools}  \mathtoolsset{showonlyrefs}
\usepackage{enumitem}
\usepackage{etoolbox}
\usepackage{mfirstuc}
\usepackage{pifont}
\usepackage{tikz}       \usetikzlibrary{cd}
\usepackage[shortcuts]{extdash}
\usepackage[pagebackref, pdfinfo={
      Title={Assembly maps for topological cyclic homology of group algebras},
     Author={Lück, Reich, Rognes, Varisco},
    Subject={MSC2010: 19D55, 55P91, 55P42, 18G60}
}]{hyperref}
\usepackage[alphabetic, backrefs, msc-links, nobysame]{amsrefs}

\listfiles

\setlength{\overfullrule}{2em}

\setcounter{tocdepth}{1}

\makeatletter
\def\@seccntformat#1{%
  \protect\textup{%
    \protect\@secnumfont
    \expandafter\protect\csname format#1\endcsname
    \csname the#1\endcsname
    \protect\@secnumpunct
  }%
}
\makeatother




\newcommand*{\hurl}  [2][www.]{\href{http://#1#2}{\nolinkurl{#2}}}
\newcommand*{\hemail}[1]{\href{mailto:#1}{\nolinkurl{#1}}}
\newcommand*{\DOI}   [1]{\href{http://dx.doi.org/#1}{\nolinkurl{#1}}}
\newcommand*{\arXiv} [1]{\href{http://www.arxiv.org/abs/#1}{\nolinkurl{arXiv:#1}}}
\newcommand*{\MSC}   [1]{\href{http://www.ams.org/msc/msc2010.html?t=#1}{#1}}

\setlist{leftmargin=*}
\setlist[enumerate]{label=(\roman*)}

\numberwithin{equation}{section}

\newcommand*{\definetheorem}[3][equation]{%
  \newaliascnt{#2}{#1}
  \newtheorem{#2}[#2]{#3}
  \aliascntresetthe{#2}
  \expandafter\def\csname #2autorefname\endcsname{#3}
}
\makeatletter
\newcommand*{\definetheoremsame}[2][equation]{%
  \definetheorem[#1]{\zap@space#2 \@empty}{\capitalisewords{#2}}
}
\makeatother

\theoremstyle{plain}
\forcsvlist{\definetheoremsame}%
{addendum, corollary, lemma, proposition, technical theorem, theorem}

\theoremstyle{definition}
\forcsvlist{\definetheoremsame}%
{definition, example, question, remark}

\newcommand*{\one}  {\text{\ding{192}}} 
\newcommand*{\two}  {\text{\ding{193}}} 
\newcommand*{\three}{\text{\ding{194}}} 
\newcommand*{\four} {\text{\ding{195}}} 
\newcommand*{\five} {\text{\ding{196}}}

\DeclareMathAlphabet{\matheurm}{U}{eur}{m}{n}

\newcommand*{\define}[5]{%
  \ifstrequal{#2}{*}{\expandafter#1\expandafter*}{\expandafter#1}%
  \csname#4#5\endcsname{#3{#5}}
}
%
%

\forcsvlist{\define{\newcommand}{*}{\mathbf}{B}}{%
  A,B,C,D,E,F,G,H,I,J,K,L,M,N,O,P,Q,R,S,T,U,V,W,X,Y,Z,%
}
\forcsvlist{\define{\newcommand}{*}{\mathcal}{C}}{%
  A,B,C,D,E,F,G,H,I,J,K,L,M,N,O,P,Q,R,S,T,U,V,W,X,Y,Z,%
}
\forcsvlist{\define{\newcommand}{*}{\mathbb}{I}}{%
  A,B,C,D,E,F,G,H,I,  K,L,M,N,O,P,Q,R,S,T,U,V,W,X,Y,Z%
}                

\forcsvlist{\define{\DeclareMathOperator}{}{}{}}%
{asbl, aut, coker, conj, fun, hocoeq, hocofib, hoeq, hofib, id, im, incl, Lan, map, mor, obj, pr, pt, tors}
\DeclareMathOperator{\ii}{in}

\forcsvlist{\define{\DeclareMathOperator}{*}{}{}}%
{colim, hocolim, holim}

\forcsvlist{\define{\newcommand}{*}{\matheurm}{}}%
{Cat, Groupoids, Or, Sets, Sp, Top}
\newcommand*{\RFcat}{{\matheurm{RF}}}

\forcsvlist{\define{\newcommand}{*}{\mathbf}{}}%
{K, THH, TR, TC, C, T, Wh}
\newcommand*{\tr}{\mathit{TR}}
\newcommand*{\tc}{\mathit{TC}}
\newcommand*{\wh}{\mathit{Wh}}

\newcommand*{\TO}  [1][]{\stackrel{#1}{\longrightarrow}}
\newcommand*{\FROM}[1][]{\stackrel{#1}{\longleftarrow}}
\newcommand*{\MOR} [4][]{#2\colon#3\TO[#1]#4}
\newcommand*{\AND}{\qquad\text{and}\qquad}

\DeclarePairedDelimiterX\SET[2]{\{}{\}}{\,#1\;\delimsize\vert\;#2\,}
\DeclarePairedDelimiter\real{\lvert}{\rvert}

\newcommand*{\ds}{\displaystyle}
\newcommand*{\ts}{\textstyle}

\newcommand*{\op}  {{\operatorname{op}}}

\newcommand*{\prlast}{\operatorname{pr}_{\operatorname{last}}}
\newcommand*{\prlasto}{\overline{\operatorname{pr}}_{\operatorname{last}}}

\DeclareMathOperator*{\smallcoprod}{\ts\coprod}
\DeclareMathOperator*{\smallprod}  {\ts\prod}
\DeclareMathOperator*{\tensor}     {\otimes}
\DeclareMathOperator*{\timesd}     {\times}
\DeclareMathOperator*{\sma}        {\wedge}
\newcommand*{\ssma}{\!\sma\!} 

\newcommand*{\D}{\text{-}} 

\newcommand*{\spec}[1]{\mathbb{#1}} 

\newcommand*{\sh}{\smash{\operatorname{sh}}\vphantom{\Omega}}

\newcommand*{\Cp}    [2][p]{C_{{#1}^{#2}}}

\newcommand*{\limone}[1][1]{\sideset{}{^{#1}}\lim}

\newcommand*{\Cyc} {{\mathcal{C} \hspace{-.2ex}\mathit{yc}}}
\newcommand*{\VCyc}{{\mathcal{VC}\hspace{-.2ex}\mathit{yc}}}
\newcommand*{\FCyc}{{\mathcal{FC}\hspace{-.2ex}\mathit{yc}}}
\newcommand*{\Fin} {{\mathcal{F} \hspace{-.2ex}\mathit{in}}}

\newcommand*{\oid}[2]{#1\!\smallint\!#2} 

\newcommand*{\BHM}{Bökstedt-Hsiang-Madsen}

\newcommand*{\cplus}[1][]{\ifstrequal{#1}{*}{C}{c}onnective\textsuperscript{+}}

\newcommand*{\MNM}[3]{(\mathit{#1M}_{#2\subset#3})}

\newcommand*{\Sym}{\mathit{\Sigma}}


\begin{document}

\begin{abstract}
We use assembly maps to study~$\TC(\spec{A}[G];p)$, the topological cyclic homology at a prime~$p$ of the group algebra of a discrete group~$G$ with coefficients in a connective ring spectrum~$\spec{A}$.
For any finite group, we prove that the assembly map for the family of cyclic subgroups is an isomorphism on homotopy groups.
For infinite groups, we establish pro-isomorphism, (split) injectivity, and rational injectivity results, as well as counterexamples to injectivity and surjectivity.
In particular, for hyperbolic groups and for virtually finitely generated abelian groups, we show that the assembly map for the family of virtually cyclic subgroups is injective but in general not surjective.
\end{abstract}


\maketitle
\tableofcontents
\thispagestyle{empty}


\section{Introduction}

The goal of this paper is to study topological cyclic homology of group algebras using assembly maps.
Since it was invented by Bökstedt, Hsiang, and Madsen in~\cite{BHM}, $\tc$ has been extensively studied, and deep structural and computational results have been established for example in \citelist{
\cite{H-p-typical}
\cite{HM-top}
\cite{McCarthy-rel}
\cite{Dundas-rel}
\cite{AR-top}
\cite{HM-annals}
\cite{GH}
\cite{BM-loc}
\cite{AGHL}
\cite{BGT}
\cite{BM-theory}
}, among many other works;
we~recommend the book~\cite{Dundas} for an overview.
New interactions with arithmetic and algebraic geometry, inspired by~\cite{BMS}, have recently amplified the interest in~$\tc$ and related theories.

Explicit computations usually rely on the connection to the de~Rham-Witt complex, and there are only few theorems about non-commutative rings, e.g., \citelist{
\cite{H-non-comm}
\cite{AR-Morava}
\cite{Angeltveit}
}.
Our results here show that assembly maps provide a powerful tool to study~$\tc$ of group algebras.

Consider a ring or more generally a connective ring spectrum~$\spec{A}$, and let $p$ be a prime.
Consider also a discrete group~$G$ and a family~$\CF$ of subgroups of~$G$.
As explained in \autoref{ASBL}, the assembly map for topological cyclic homology is a map of spectra
\begin{equation}
\label{eq:asbl-TC}
EG(\CF)_+\sma_{\Or G}\TC(\spec{A}[\oid{G}{-}];p)
\TO
\TC(\spec{A}[G];p)
\,,
\end{equation}
whose source may be interpreted as the homotopy colimit of~$\TC(\spec{A}[H];p)$ as $H$ ranges over the subgroups that belong to the family~$\CF$.
We always and tacitly assume that the symmetric ring spectrum~$\spec{A}$ is \cplus, in the sense of \autoref{cplus}.
This is a mild technical condition, which is satisfied by the sphere spectrum~$\spec{S}$ and by the Eilenberg-Mac Lane spectra of discrete rings.
We point out that all the results mentioned below hold not only for~$\tc$ but also for~$\tr$; see \autoref{also-TR}.

Our first result states that for any finite group~$G$ the assembly map~\eqref{eq:asbl-TC} for the family of cyclic subgroups induces isomorphisms on homotopy groups.
We should think of this result as an integral induction theorem for topological cyclic homology, in the spirit of Artin and Brauer induction in the representation theory of finite groups.

\begin{theorem}[isomorphism]
\label{iso-TC-finite-groups}
If the group~$G$ is finite, then the assembly map for the family of cyclic subgroups
\[
EG(\Cyc)_+\sma_{\Or G}\TC(\spec{A}[\oid{G}{-}];p)
\TO
\TC(\spec{A}[G];p)
\]
is a $\pi_*$-isomorphism.
\end{theorem}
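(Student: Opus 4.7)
The plan is to use the splitting of $\THH$ of a group algebra, as a cyclotomic spectrum, into summands indexed by conjugacy classes of group elements---a decomposition that descends to $\TC(-;p)$. For any discrete group $H$ there is a natural cyclotomic equivalence
\[
\THH(\spec{A}[H]) \simeq \bigvee_{[h]\in \conj(H)} \THH(\spec{A}[H])_{[h]},
\]
in which each $[h]$-summand is built from $\THH(\spec{A})$ and $BZ_H(h)$ up to the usual cyclotomic twist. For $H=G$ finite, the target of the assembly map thus becomes a finite wedge indexed by $\conj(G)$, with well-understood $[g]$-summands $\TC(\spec{A}[G];p)_{[g]}$.

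Applying the same splitting functorially in $H$ to the source, as $H = C$ varies over the cyclic subgroups of $G$, each $\TC(\spec{A}[C];p)$ splits as a wedge indexed by the elements of $C$ itself (conjugation in an abelian group being trivial), and the homotopy colimit over the restriction of $\Or G$ to $\Cyc$ reorganizes these contributions according to the $G$\nobreakdash-conjugacy class of the chosen element. Because the assembly map respects the decompositions, the theorem reduces to a summand-by-summand comparison. Fix $[g] \in \conj(G)$ and let $C_0 = \langle g\rangle$. The $[g]$\nobreakdash-summand of the source is then the homotopy colimit, over the full subcategory of $\Or G|_{\Cyc}$ on those orbits $G/C$ for which $C$ contains a $G$\nobreakdash-conjugate of $g$, of the elementary summands $\THH(\spec{A}[C])_{[h]}$ attached to pairs $(C,h)$ with $h\in C$ and $h\sim_G g$. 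Each inclusion $C_0 \subseteq C$ induces an equivalence on these elementary summands, since they depend only on the subgroup generated by $h$.

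The main obstacle is to turn this into a rigorous homotopy colimit calculation. It comes down to a $Z_G(g)$\nobreakdash-equivariant contractibility statement for the nerve of the poset of cyclic subgroups of $G$ containing $C_0$, together with careful tracking of the cyclotomic structure under the $p$\nobreakdash-power Frobenius maps, which permute conjugacy classes via $g \mapsto g^p$ and therefore force one to work with $p$\nobreakdash-cyclic orbits of classes rather than classes themselves. This Frobenius bookkeeping is the delicate point: the identification of source and target summands has to be natural with respect to the $p$\nobreakdash-power maps on both sides. Once the collapse onto $C_0$ is established, the resulting source summand is visibly identified with $\TC(\spec{A}[G];p)_{[g]}$ (via the orbit $G/Z_G(g)$ acting on the elementary summand at $g$), and summing over conjugacy classes yields the claimed $\pi_*$\nobreakdash-isomorphism.
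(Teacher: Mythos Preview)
Your strategy of decomposing source and target by conjugacy class and comparing summand by summand is natural and is related to what underlies the paper's argument, but there is a concrete error at the step you single out. You assert that for $C_0=\langle h\rangle\subseteq C$ the inclusion induces an equivalence on the $[h]$-summands, ``since they depend only on the subgroup generated by~$h$.'' This is false. For an abelian group~$C$ one has $\real{CN_{\bullet,[h]}(C)}\simeq BC$, so the $[h]$-summand of $\THH(\spec{A}[C])$ is $\THH(\spec{A})\sma BC_+$, which depends on all of~$C$ and not only on~$\langle h\rangle$; the map induced by $C_0\hookrightarrow C$ is $\THH(\spec{A})\sma B{C_0}_+\to\THH(\spec{A})\sma BC_+$, which is not an equivalence unless $C_0=C$. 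Contractibility of the poset of cyclic subgroups containing~$C_0$ (which does hold, $C_0$ being initial) therefore does \emph{not} collapse the homotopy colimit as you suggest: that would require the diagram to send every morphism to an equivalence. The relevant homotopy colimit of the~$BC$'s does turn out to be $BZ_G(g)$, but establishing this is essentially the substance of the splitting theorem you want to quote, not a corollary of it. Your acknowledged ``Frobenius bookkeeping'' is likewise left unresolved.

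The paper takes a route that sidesteps both issues. Rather than working class by class, it bundles all classes with $\langle c\rangle\in\CF$ into the retract~$\THH_\CF$ and proves in \autoref{split-THH-fix} that the assembly map with target $\bigl(\THH_\CF(\spec{A}[G])\bigr)^{\Cp{n}}$ is a $\pi_*$-isomorphism for every~$n$, by induction along the fundamental fibration sequence for~$R$. The observation that the family~$\Cyc$ is $p$-radicable is exactly what makes $R$ restrict to~$\THH_\CF$ and thereby handles the Frobenius issue in one stroke. Since $\Cyc$ contains all cyclic subgroups, $\pr_{\Cyc}$ is an isomorphism, so one has levelwise $\pi_*$-isomorphisms before passing to the homotopy limit. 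Finiteness of~$G$ enters only at the last step: it guarantees a model for~$EG(\Cyc)$ of finite type (the nerve of the category of pointed $\Cyc$-orbits), so that $\holim_\RFcat$ commutes with $EG(\Cyc)_+\sma_{\Or G}(-)$ by \autoref{holim-smash}\ref{i:holim-smash-fin-type}, yielding \autoref{split-TC-radicable-fin-type} and hence the theorem. No summand-by-summand computation is needed.
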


\autoref{iso-TC-finite-groups} allows us to attack explicit computations for non-cyclic finite groups; more precisely, to reduce such computations to the cyclic subgroups.
We make this explicit in the smallest example: that of the symmetric group~$\Sym_3$.

\begin{proposition}
\label{TC(A[S3];p)}
For any prime~$p$ there is a $\pi_*$-isomorphism
\[
\TC(\spec{A}[C_2];p)
\vee
\widetilde\TC(\spec{A}[C_3];p)_{hC_2}
\TO[\simeq]
\TC(\spec{A}[\Sym_3];p)
\,,
\]
where $C_2$ acts on~$C_3$ by sending the generator to its inverse, and $\widetilde\TC(\spec{A}[G];p)$ is the homotopy cofiber of the map
\(
\TC(\spec{A};p)
\TO
\TC(\spec{A}[G];p)
\)
induced by the inclusion.
\end{proposition}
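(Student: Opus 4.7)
The plan is to apply \autoref{iso-TC-finite-groups} to $G = \Sym_3$, reducing the computation of $\TC(\spec{A}[\Sym_3];p)$ to the analysis of the assembly source
\[
T = E\Sym_3(\Cyc)_+ \sma_{\Or\Sym_3} \TC(\spec{A}[\oid{\Sym_3}{-}];p).
\]
The cyclic subgroups of $\Sym_3$ form three conjugacy classes: the trivial subgroup; the self-normalizing subgroup $C_2$ of order $2$ (with three conjugates, since $N_{\Sym_3}(C_2) = C_2$); and the normal alternating subgroup $C_3 = A_3$ (whose Weyl group $N_{\Sym_3}(C_3)/C_3 \cong C_2$ acts on $C_3$ by inversion).

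Decompose $\Cyc = \CF_2 \cup \CF_3$, where $\CF_2$ consists of subgroups of order at most~$2$ and $\CF_3 = \{1, C_3\}$, so that $\CF_2 \cap \CF_3 = \{1\}$. This yields a homotopy pushout of $\Sym_3$-classifying spaces, and upon smashing with the coefficient $\Or\Sym_3$-spectrum a homotopy pushout of spectra
\[
T \simeq T_{\CF_2} \cup^h_{T_{\{1\}}} T_{\CF_3}.
\]
Identify the corners using standard models: $T_{\{1\}} \simeq \TC(\spec{A};p)_{h\Sym_3}$; the model $E\Sym_3(\CF_3) = EC_2$ with $\Sym_3$-action through $\Sym_3 \twoheadrightarrow C_2 = \Sym_3/C_3$ yields $T_{\CF_3} \simeq \TC(\spec{A}[C_3];p)_{hC_2}$ with the inversion action; and the join model $E\Sym_3(\CF_2) \simeq E\Sym_3 * \Sym_3/C_2$ identifies $T_{\CF_2}$ with the homotopy pushout of $\TC(\spec{A};p)_{h\Sym_3} \leftarrow \TC(\spec{A};p)_{hC_2} \to \TC(\spec{A}[C_2];p)$, where the left map is induced by $C_2 \hookrightarrow \Sym_3$ and the right is the trivial-family $C_2$-assembly.

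Now construct the map of the proposition via the inclusions $C_2, C_3 \hookrightarrow \Sym_3$, noting that the inner conjugation $C_2$-action on $\TC(\spec{A}[\Sym_3];p)$ is trivial up to homotopy and matches the inversion $C_2$-action on~$C_3$, so that taking homotopy $C_2$-orbits makes sense. Lift it through the equivalence $T \simeq \TC(\spec{A}[\Sym_3];p)$: the $\TC(\spec{A}[C_2];p)$-summand includes as the right corner of $T_{\CF_2}$, and the $\widetilde\TC(\spec{A}[C_3];p)_{hC_2}$-summand includes via the augmentation splitting $T_{\CF_3} \simeq \TC(\spec{A};p)_{hC_2} \vee \widetilde\TC(\spec{A}[C_3];p)_{hC_2}$. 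The main obstacle is verifying that the resulting map realizes the full pushout~$T$: chasing through the iterated pushout with the aid of the augmentation splittings $\TC(\spec{A}[C_i];p) \simeq \TC(\spec{A};p) \vee \widetilde\TC(\spec{A}[C_i];p)$, one must show that the homotopy-orbit contributions $\TC(\spec{A};p)_{hH}$ for $H \in \{\Sym_3, C_2, C_3\}$ appearing in the corners cancel through the gluings along $T_{\{1\}}$, leaving only the single trivial summand $\TC(\spec{A};p) \subset \TC(\spec{A}[C_2];p)$ together with the reduced summands $\widetilde\TC(\spec{A}[C_2];p)$ (from $T_{\CF_2}$, where $W_{\Sym_3}(C_2) = 1$) and $\widetilde\TC(\spec{A}[C_3];p)_{hC_2}$ (from $T_{\CF_3}$).
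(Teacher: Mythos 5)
Your overall strategy agrees with the paper's: reduce via \autoref{iso-TC-finite-groups} to the assembly source, realize $E\Sym_3(\Cyc)$ as a homotopy pushout of $\Sym_3$-spaces, smash with the coefficient $\Or\Sym_3$-spectrum, and identify corners. But your decomposition is genuinely different. The paper's \autoref{ES3(CYC)} produces a \emph{single} homotopy cocartesian square
\[
\begin{tikzcd}
i_*EC_2 \arrow[r, "\tau_{EC_2}"] \arrow[d, "i_*\pr"'] & q^*EC_2 \arrow[d] \\
i_*\pt \arrow[r] & E\Sym_3(\Cyc)
\end{tikzcd}
\]
(with $i\colon C_2\hookrightarrow\Sym_3$ and $q\colon\Sym_3\to C_2$ killing $C_3$), so that after smashing the three interesting corners are $\TC(\spec{A};p)_{hC_2}$, $\TC(\spec{A}[C_2];p)$, and $\TC(\spec{A}[C_3];p)_{hC_2}$, and the top map is split by $C_3\to1$; \autoref{cartesian-split} then applies immediately. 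You instead split $\Cyc=\CF_2\cup\CF_3$ along the trivial family and obtain an iterated pushout with a fourth corner $\TC(\spec{A};p)_{h\Sym_3}$. This is a legitimate alternative model for $E\Sym_3(\Cyc)$ and hence for the assembly source.

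The problem is the final step. The ``chase through augmentation splittings'' you describe, in which $\TC(\spec{A};p)_{hH}$ for $H\in\{\Sym_3,C_2,C_3\}$ are supposed to cancel, is both off target and factually wrong: no $C_3$-orbit term appears among your corners (the orbit groups that do occur are $\Sym_3$, $C_2$, and the trivial group, via $\TC(\spec{A};p)\subset\TC(\spec{A}[C_2];p)$). What actually finishes the argument is that your two pushout squares \emph{paste} into a single one. The map $T_{\{1\}}\to T_{\CF_2}$ is precisely the inclusion of the corner $\TC(\spec{A};p)_{h\Sym_3}$ of the inner square, so the pasted outer square has corners $\TC(\spec{A};p)_{hC_2}$, $\TC(\spec{A}[C_2];p)$, $\TC(\spec{A}[C_3];p)_{hC_2}$, with right-hand gluing map the composite $\TC(\spec{A};p)_{hC_2}\to\TC(\spec{A};p)_{h\Sym_3}\to\TC(\spec{A}[C_3];p)_{hC_2}$. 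On the space level this composite is $i_*EC_2\to E\Sym_3\to q^*EC_2$; since $i_*EC_2$ is a free $\Sym_3$-CW complex and $q^*EC_2$ has contractible underlying space, this is $\Sym_3$-homotopic to $\tau_{EC_2}$, so the pasted square is the paper's. After that collapse there is no further cancellation to arrange: the top map is split by the retraction $C_3\to1$, and \autoref{cartesian-split} gives the claimed $\pi_*$-isomorphism. So your route is valid, but the last step should be the pasting lemma plus \autoref{cartesian-split}, not the heuristic cancellation you sketched.
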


For infinite groups we obtain an analog of \autoref{iso-TC-finite-groups} for the pro-spectrum whose homotopy limit defines~$\TC$.

\begin{theorem}[pro-isomorphism]
\label{pro-iso-TC}
For any group~$G$, the assembly maps for the family of cyclic subgroups induce a strict map of pro-spectra
\begin{align*}
\Bigl\{
EG(\Cyc)_+\sma_{\Or G}\TC^n(\spec{A}[\oid{G}{-}];p)
\Bigr\}
&\TO
\Bigl\{
\TC^n(\spec{A}[G];p)
\Bigr\}
\\\shortintertext{which in each level of the pro-system is a $\pi_*$-isomorphism. In particular, there is a $\pi_*$-isomorphism}
\holim_{n\in\IN}\Bigl(EG(\Cyc)_+\sma_{\Or G}\TC^n(\spec{A}[\oid{G}{-}];p)\Bigr)
&\TO
\TC(\spec{A}[G];p)
\,.
\end{align*}
\end{theorem}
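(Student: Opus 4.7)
The plan proceeds in two steps: first reduce the homotopy-limit assertion to the level-wise pro-isomorphism claim, and then establish the latter.

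For the reduction step, I would invoke the Milnor short exact sequence
\[
0 \to \limone \pi_{*+1} X_n \to \pi_* \holim_n X_n \to \lim \pi_* X_n \to 0
\]
associated with any tower of spectra $\{X_n\}$, applied to both sides of the induced map on homotopy limits. Since $\TC(\spec{A}[G];p) = \holim_n \TC^n(\spec{A}[G];p)$ by definition, a level-wise $\pi_*$-isomorphism yields isomorphisms on the $\lim \pi_*$ and $\limone \pi_{*+1}$ columns, and hence on $\pi_* \holim$ by the five-lemma. Thus the ``in particular'' assertion is formal once the level-wise pro-isomorphism is established.

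The substantial content is the level-wise claim. My strategy is to exploit the conjugacy-class decomposition of $\THH$ of group algebras: $\THH(\spec{A}[G])$ splits $S^1$-equivariantly into pieces indexed by conjugacy classes of elements of~$G$, with the piece at~$[g]$ controlled by the cyclic subgroup~$\langle g \rangle$ and the centralizer~$Z_G(g)$. This decomposition is preserved by the restriction and Frobenius maps that define~$\TR^n$ and~$\TC^n$, so it descends to~$\TC^n(\spec{A}[G];p)$. The source of the assembly map admits a matching decomposition coming from the orbits of the~$\Or G$-diagram on cyclic subgroups, and the core task is to verify that the assembly map respects the two decompositions and is an equivalence on each summand.

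The main obstacle, and where I expect most of the technical work to reside, is this summand-by-summand matching. For each conjugacy class~$[g]$ with cyclic subgroup~$H = \langle g \rangle$, the corresponding piece of the source---a homotopy orbit under the Weyl-type automorphisms of~$G/H$ in~$\Or G$---must be identified with the centralizer-controlled summand of~$\TC^n(\spec{A}[G];p)$. Crucially, because $\TC^n$ at finite level~$n$ sees only the finite cyclic group~$C_{p^n}$ acting rather than the full circle~$\T$, this matching is far more tractable than the analogous statement for~$\THH$ or for~$\TC$ itself; the obstruction to upgrading the result to a genuine absolute (rather than pro) isomorphism lives precisely in the failure of homotopy colimits to commute with the defining inverse limit. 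I anticipate this analysis being packaged as a technical comparison theorem at each finite stage, after which the theorem follows.
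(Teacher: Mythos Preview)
Your reduction of the homotopy-limit statement to the level-wise claim via the Milnor sequence is correct, though overkill: a level-wise $\pi_*$-isomorphism of towers induces a $\pi_*$-isomorphism on homotopy limits directly.

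The substantive gap is in your level-wise argument. You assert that the conjugacy-class decomposition of $\THH(\spec{A}[G])$ ``is preserved by the restriction and Frobenius maps that define $\TR^n$ and $\TC^n$, so it descends to $\TC^n(\spec{A}[G];p)$.'' This is true for the Frobenius~$F$ (which is just inclusion of fixed points) but \emph{false} for the restriction map~$R$. Tracing through the diagonal homeomorphism~$\Delta$ that defines~$R$, one finds that the $[c]$-piece of $\THH(\spec{A}[G])^{C_{p^n}}$ is carried by~$R$ into the wedge of the $[d]$-pieces of $\THH(\spec{A}[G])^{C_{p^{n-1}}}$ over all conjugacy classes~$[d]$ with $[d^p]=[c]$. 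Consequently $\TC^n(\spec{A}[G];p)=\hoeq(R,F)$ does \emph{not} split along individual conjugacy classes, and your summand-by-summand matching cannot get off the ground.

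The paper handles exactly this obstruction by introducing the notion of a \emph{$p$-radicable} family: one for which $\langle g\rangle\in\CF$ if and only if $\langle g^p\rangle\in\CF$. The family $\Cyc$ is $p$-radicable, and this is precisely what ensures that $R$ restricts to a self-map of the $\CF$-part $\THH_\CF$. The paper then does not attempt to decompose the target by conjugacy classes at all; instead it proves inductively on~$n$, using the fundamental stable fibration sequence
\[
\bigl(\sh^{ES^1_+}\THH(\spec{A}[\oid{G}{S}])\bigr)^{C_{p^n}}
\TO
\THH(\spec{A}[\oid{G}{S}])^{C_{p^n}}
\TO[R]
\THH(\spec{A}[\oid{G}{S}])^{C_{p^{n-1}}},
\]
that the assembly map for~$\Cyc$ is a $\pi_*$-isomorphism on each $\THH(-)^{C_{p^n}}$, the base case being the splitting theorem for~$\THH$ itself. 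The passage to $\TC^n=\hoeq(R,F)$ is then formal. Your proposal is missing both the $p$-radicability observation and the inductive use of the fundamental fibration; without these, the argument does not go through.
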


In order to deduce results about the assembly map~\eqref{eq:asbl-TC} from \autoref{pro-iso-TC}, one encounters the problem that the processes of forming homotopy limits and assembly maps do not commute in general.
For certain classes of groups, we show how this leads to results about injectivity but failure of surjectivity.

\begin{theorem}[injectivity]
\label{inj-TC-examples}
Assume that one of the following conditions holds:
\begin{enumerate}
\item
\label{i:Fin}
$\CF=\Fin$ is the family of finite subgroups, and there is a universal space $EG(\Fin)$ of finite type;
\item
\label{i:VCyc}
$\CF=\VCyc$ is the family of virtually cyclic subgroups, and $G$ is hyperbolic or virtually finitely generated abelian.
\end{enumerate}
Then the assembly map
\[
EG(\CF)_+\sma_{\Or G}\TC(\spec{A}[\oid{G}{-}];p)
\TO
\TC(\spec{A}[G];p)
\]
is $\pi_*$-injective.
Moreover, in case~\ref{i:Fin} the assembly map is split injective.
\end{theorem}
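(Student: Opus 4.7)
The central difficulty is that $\TC(\spec{A}[G];p) = \holim_n \TC^n(\spec{A}[G];p)$ is a homotopy limit while the source of the assembly is a homotopy colimit; the assembly thus factors through the canonical exchange map
\[
EG(\CF)_+ \sma_{\Or G} \holim_n \TC^n(\spec{A}[\oid G -];p)
\TO
\holim_n\bigl(EG(\CF)_+ \sma_{\Or G} \TC^n(\spec{A}[\oid G -];p)\bigr)
\,,
\]
which is not a $\pi_*$-isomorphism in general. The plan is to combine \autoref{pro-iso-TC} with finite type hypotheses on $EG(\CF)$ to control this exchange.

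The first step is to upgrade the level-wise isomorphism in \autoref{pro-iso-TC} from the family $\Cyc$ to the larger family $\CF$. Applied to each subgroup $V \in \CF$ in place of $G$, \autoref{pro-iso-TC} gives that $EV(\Cyc|_V)_+ \sma_{\Or V} \TC^n(\spec{A}[\oid V -];p) \to \TC^n(\spec{A}[V];p)$ is a $\pi_*$-iso at every level $n$. A cofinality/Kan extension argument along the inclusion $\Or_\Cyc G \to \Or_\CF G$ then shows that the relative assembly $EG(\Cyc)_+ \sma_{\Or G} \TC^n \to EG(\CF)_+ \sma_{\Or G} \TC^n$ is a $\pi_*$-iso at every level, and 2-out-of-3 with \autoref{pro-iso-TC} for $G$ itself yields that the $\CF$-assembly $EG(\CF)_+ \sma_{\Or G} \TC^n \to \TC^n(\spec{A}[G];p)$ is a $\pi_*$-iso for every $n$.

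The second step is to control the exchange map via a Milnor $\lim^1$-argument under the finite type hypothesis on $EG(\CF)$. This is the direct assumption in case~(i); for case~(ii), I would invoke that hyperbolic groups (by Mineyev-Yu and Bartels-Lück-Reich) and virtually finitely generated abelian groups admit $G$-CW models of $EG(\VCyc)$ of finite type. Filter $EG(\CF)$ by its finite equivariant subcomplexes $X_k$; each $X_{k+}\sma_{\Or G}(-)$ is a homotopy colimit over a finite diagram and so commutes with $\holim_n$. Passing to the sequential colimit in $k$ yields a Milnor short exact sequence whose $\lim^1$-term is controlled by the finite type hypothesis together with the connectivity properties of the pro-spectrum $\{\TC^n\}$. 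Combined with the first step, this yields the required $\pi_*$-injectivity of the $\CF$-assembly. In case~(i), the stronger Mittag-Leffler condition expected from finite type should make the exchange map itself a $\pi_*$-iso, and the Fin-assembly would then be a $\pi_*$-iso, hence in particular split injective.

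The main obstacle is the precise verification of the $\lim^1$/Mittag-Leffler behavior of the exchange map, which must be strong enough to guarantee $\pi_*$-injectivity in both cases and the stronger $\pi_*$-iso in case~(i). For case~(ii) there is the additional task of invoking (or constructing) explicit finite type $EG(\VCyc)$-models for hyperbolic and virtually finitely generated abelian groups. A subtle feature is that when the exchange map is only $\pi_*$-injective but not a $\pi_*$-iso, the argument still delivers the injectivity conclusion, which is consistent with the weaker statement in case~(ii) compared to case~(i).
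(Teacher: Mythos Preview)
Your proposal has two substantial gaps, one in each case.

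\textbf{Case~(i), Step~1 is broken.} Your transitivity argument promotes the level-wise isomorphism of \autoref{pro-iso-TC} from $\Cyc$ to~$\CF$ via the inclusion $\Or_\Cyc G \hookrightarrow \Or_\CF G$. But $\Cyc$ is the family of \emph{all} cyclic subgroups, and $\Cyc\not\subseteq\Fin$ as soon as $G$ contains an element of infinite order. So there is no such inclusion, and no way to conclude that the $\Fin$-assembly is a $\pi_*$-isomorphism at each level~$n$. In fact your conclusion that the $\Fin$-assembly for~$\TC$ would be a $\pi_*$-isomorphism is false: for a torsion-free hyperbolic group $\Fin=\{1\}$, and if the classical assembly were a $\pi_*$-iso then so would be the $\VCyc$-assembly (which it factors), contradicting \autoref{not-surj-TC}. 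The paper's mechanism for case~(i) is entirely different: it uses that $\Fin$ is $p$-radicable and invokes the retraction $\pr_\CF$ onto the $\CF$-part $\THH_\CF$ (see \autoref{split-THH-fix}), which yields level-wise \emph{split injectivity} rather than isomorphism. Combined with the exchange map being a $\pi_*$-iso under the finite-type hypothesis, this gives split injectivity of the $\Fin$-assembly---no more.

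\textbf{Case~(ii), Step~2 relies on a false claim.} You invoke finite-type models of $EG(\VCyc)$ for hyperbolic and virtually finitely generated abelian groups, citing Mineyev--Yu and Bartels--Lück--Reich. Those results concern $EG(\Fin)$, not $EG(\VCyc)$. In fact, a non-virtually-cyclic hyperbolic group has infinitely many conjugacy classes of maximal infinite virtually cyclic subgroups (\autoref{hyperbolic}\ref{i:hyperbolic-maxvircyc}), and the pushout~\eqref{eq:EG(VCyc)-hyperbolic} then forces $EG(\VCyc)$ to have infinitely many equivariant $0$-cells; similarly for $\IZ^n$ with~$n\ge2$. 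Your Step~1 for~$\VCyc$ is actually correct (since $\Cyc\subseteq\VCyc$), so if $EG(\VCyc)$ \emph{were} of finite type your argument would make the $\VCyc$-assembly a $\pi_*$-iso---again contradicting \autoref{not-surj-TC}. The paper circumvents this by writing $\VCyc=\bigcup_j\CF_j$ as a directed union of $p$-radicable subfamilies, each admitting a finite-type universal space (constructed from finite collections of maximal virtually cyclic subgroups; see \autoref{EG(VCyc)}). Split injectivity for each~$\CF_j$ then passes to $\pi_*$-injectivity in the directed colimit.
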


Here we say that a map of spectra~$\MOR{f}{\BX}{\BY}$ is split injective if there is a map $\MOR{g}{\BY}{\BZ}$ to some other spectrum~$\BZ$ such that $g\circ f$ is a $\pi_*$-isomorphism.
We emphasize that the existence of a universal space~$EG(\Fin)$ of finite type is a mild condition.
For example, all the following groups have even finite (i.e., finite type and finite dimensional) models for~$EG(\Fin)$:
hyperbolic groups;
CAT(0)~groups;
cocompact lattices in virtually connected Lie groups;
arithmetic groups in semisimple connected linear $\IQ$-algebraic groups;
mapping class groups;
outer automorphism groups of free groups.
For more information we refer to \cite{L-survey}*{Section~4} and also to~\cite{Mislin} in the case of mapping class groups.

In fact, \autoref{inj-TC-examples} is a special case of a more general result, \autoref{inj-TC-technical} below.
But first we want to highlight the following negative result about surjectivity.

\begin{theorem}[failure of surjectivity]
\label{not-surj-TC}
The assembly map for the family of virtually cyclic subgroups
\[
EG(\VCyc)_+\sma_{\Or G}\TC(\spec{A}[\oid{G}{-}];p)
\TO
\TC(\spec{A}[G];p)
\]
is not always $\pi_*$-surjective.
For example, it is not surjective on~$\pi_{-1}$ if
$\spec{A}=\IZ_{(p)}$ 
and $G$ is either finitely generated free abelian or torsion-free hyperbolic, but not cyclic.
\end{theorem}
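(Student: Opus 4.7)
The plan is to leverage the pro-isomorphism theorem~\autoref{pro-iso-TC}, which identifies the target $\TC(\IZ_{(p)}[G];p)$ with $\holim_n\bigl(EG(\VCyc)_+\sma_{\Or G}\TC^n(\IZ_{(p)}[\oid{G}{-}];p)\bigr)$ up to $\pi_*$-isomorphism. Under this identification the assembly map factors through the canonical interchange map
\[
EG(\VCyc)_+\sma_{\Or G}\holim_n\TC^n(\IZ_{(p)}[\oid{G}{-}];p)
\TO
\holim_n\Bigl(EG(\VCyc)_+\sma_{\Or G}\TC^n(\IZ_{(p)}[\oid{G}{-}];p)\Bigr),
\]
so that failure of $\pi_{-1}$-surjectivity is controlled by a Milnor-type $\limone$-term measuring the failure of~$\sma_{\Or G}$ to commute with~$\holim_n$.

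The second step is to compute $\pi_0$ at each finite level~$n$. Since~$G$ is torsion-free, the family~$\VCyc$ consists only of the trivial subgroup together with the infinite cyclic subgroups, so the relevant $\Or G$-diagram of source spectra is assembled from~$\TC^n(\IZ_{(p)};p)$ and~$\TC^n(\IZ_{(p)}[\IZ];p)$. Using finite-type (indeed finite-dimensional) models of~$EG(\VCyc)$, which exist both for finitely generated free abelian and for torsion-free hyperbolic~$G$, combined with an equivariant Atiyah-Hirzebruch spectral sequence, I would obtain a concrete upper bound on the image of the $n$-th level assembly. For the target I would invoke Hesselholt's description of $\TC^n$ of smooth $\IZ_{(p)}$-algebras via the big de~Rham-Witt complex, applied to $\IZ_{(p)}[t_1^{\pm 1},\ldots,t_n^{\pm 1}]$ in the free abelian case.

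The third step is to exhibit an explicit obstruction class. For~$G=\IZ^n$ with~$n\geq2$, the cup products $d\log t_i\wedge d\log t_j$ (with~$i<j$) give classes in~$\pi_0\TC^n(\IZ_{(p)}[\IZ^n];p)$ that do not lie in the image of the $n$-th level assembly, since each $\IZ$-subgroup only contributes the individual $d\log t_i$'s. Tracking these classes along the Frobenius-Verschiebung tower shows that the pro-system of cokernels is not Mittag-Leffler, so the corresponding~$\limone$ is non-zero and produces a genuine element of~$\pi_{-1}$ of the target outside the image of the assembly map. For~$G$ torsion-free hyperbolic non-cyclic, I would run a parallel argument, reducing via standard group-theoretic techniques to a subgroup or retract where a de~Rham-Witt or Waldhausen/Nil-type construction provides the required non-Mittag-Leffler cokernel.

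The main technical obstacle will be the careful bookkeeping of the Frobenius-Verschiebung filtration needed to verify that the relevant pro-system of cokernels is genuinely not Mittag-Leffler, so that it contributes a non-zero~$\limone$ in degree~$-1$; the injectivity result of~\autoref{inj-TC-examples} is then applied to guarantee that the resulting class detects genuine non-surjectivity rather than being absorbed into a kernel of the assembly map.
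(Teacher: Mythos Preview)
Your framework in the first paragraph is right: after \autoref{pro-iso-TC} the assembly for~$\VCyc$ factors through the interchange map between $-\sma_{\Or G}\holim_n$ and $\holim_n(-\sma_{\Or G})$, and non-surjectivity must come from there. But the specific mechanism you propose cannot work. Since each level-$n$ assembly is a $\pi_*$-\emph{isomorphism} by \autoref{pro-iso-TC}, the levelwise cokernels are all zero; in particular your classes $d\log t_i\wedge d\log t_j$ \emph{are} in the image at every finite stage, and the ``pro-system of cokernels'' is trivially Mittag--Leffler. Moreover, your claim that $EG(\VCyc)$ admits a finite-type model for $G=\IZ^n$ ($n\ge2$) or for non-cyclic torsion-free hyperbolic~$G$ is false---in both cases there are infinitely many conjugacy classes of maximal infinite cyclic subgroups, forcing infinitely many equivariant cells in each dimension---and had it been true, \autoref{holim-smash}\ref{i:holim-smash-fin-type} combined with \autoref{pro-iso-TC} would make the assembly a $\pi_*$-isomorphism, contradicting the theorem you are trying to prove.

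The paper's argument exploits precisely this infinitude. One first passes to the Whitehead theory~$\Wh^{\TC}$ (cofiber of the classical assembly): compactness of~$BG$ lets $\Wh^{\TC}(G)$ commute with~$\holim_n$ (\autoref{Wh-holim}), and a diagram chase (\autoref{Wh-trick}) together with the injectivity from \autoref{inj-TC-examples} reduces the surjectivity question to~$\Wh^{\TC}$. In the bottom degree the source of the $\Wh^{\TC}$-assembly is identified with $\bigoplus_{C\in\CM}\wh_{-1}^{\tc}(C)$, a direct sum over the infinite set~$\CM$ of conjugacy classes of maximal cyclic subgroups. The essential computation is then for a \emph{single} Laurent ring $\IZ_{(p)}[C]$, not for~$\IZ_{(p)}[G]$: Hesselholt's description of~$\tr^n_*(\IZ_{(p)}[C];p)$ is used to show that each $R\colon\wh_{-1}^{\tc^n}(C)\to\wh_{-1}^{\tc^{n-1}}(C)$ is surjective with nontrivial kernel (\autoref{keylemma}). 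Non-surjectivity then follows from the elementary observation (\autoref{lim-sum}) that for such a tower the natural map $\bigoplus^\infty\lim_n A_n\to\lim_n\bigoplus^\infty A_n$ is never surjective.
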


This result is in strong contrast to the situation in algebraic $K$-theory and $L$-theory, where the Farrell-Jones Conjecture~\cite{FJ-iso} predicts that the assembly map for the family of virtually cyclic subgroups is a $\pi_*$-isomorphism for any group and coefficients in any discrete ring or the sphere spectrum.
While still open in general, the Farrell-Jones Conjecture has been verified for an astonishingly large class of groups, including in particular the groups in~\autoref{not-surj-TC}; see \citelist{\cite{LR-survey} \cite{L-ICM} \cite{RV-survey}} for more information.
We do not believe that by enlarging the family one could obtain a version of such a conjecture that has a chance to be true for~$\tc$.

It is also interesting to notice that, at the time of writing, the analog of \autoref{inj-TC-examples}\ref{i:Fin} in algebraic $K$-theory is not known.
In particular, it is not known whether the $K$-theory assembly map for the family of finite subgroups is split injective in the case of outer automorphism groups of free groups---all we know is that, in the case when $\spec{A}=\IZ$ or~$\IS$, it is eventually rationally injective~\cite{kc}*{Theorem~1.15, page~936}.

Next, we introduce some terminology needed to formulate our more technical results below.

\begin{definition}[$p$-radicable]
\label{p-radicable}
A family $\CF$ of subgroups of~$G$ is called \emph{$p$\nobreakdash-radicable} provided that, for every $g\in G$, we have $\langle g\rangle\in\CF$ if and only if $\langle g^p\rangle\in\CF$.
\end{definition}

Since, by definition, any family~$\CF$ is closed under passage to subgroups, $\langle g\rangle\in\CF$ always implies that $\langle g^p\rangle\in\CF$, but the converse is not necessarily true.
Notice, for example, that the trivial family~$1$ is $p$-radicable if and only if there are no elements of order~$p$ in~$G$.

\begin{technicaltheorem}
\label{inj-TC-technical}
\leavevmode
\begin{enumerate}
\item
\label{i:split-inj-TC-technical}
Assume that~$\CF$ is $p$-radicable and has a universal space $EG(\CF)$ of finite type.
Then the assembly map~\eqref{eq:asbl-TC} is split injective.
\item
\label{i:pi*-inj-TC-technical}
Assume that~$\CF$ can be written as a directed union
\[
\CF=\bigcup_{j\in\CJ}\CF_j
\]
of subfamilies~$\CF_j$, each of which is $p$-radicable and has a universal space $EG(\CF_j)$ of finite type.
Then the assembly map~\eqref{eq:asbl-TC} is $\pi_*$-injective.
\end{enumerate}
\end{technicaltheorem}

Since the family of finite subgroups is $p$-radicable, \autoref{inj-TC-examples}\ref{i:Fin} is just a special case of \autoref{inj-TC-technical}\ref{i:split-inj-TC-technical}.
In \autoref{EG(VCyc)} we show that the groups in \autoref{inj-TC-examples}\ref{i:VCyc} satisfy the assumption of \autoref{inj-TC-technical}\ref{i:pi*-inj-TC-technical} for $\CF=\VCyc$.

Finally, we establish the following rational result.
Here we say that an abelian group~$M$ is \emph{almost finitely generated} if its torsion subgroup~$\tors M$ is annihilated by some~$r\in\IZ-\{0\}$, and the quotient~$M/\tors M$ is finitely generated.
We say that a map of spectra~$\MOR{f}{\BX}{\BY}$ is $\pi_n^\IQ$-injective if $\pi_n(f)\tensor_\IZ\IQ$ is injective.

\begin{theorem}[rational injectivity]
\label{inj-TC-rationally}
Let $N\geq0$ be an integer.
Assume that:
\begin{enumerate}[label=(\alph*)]
\item\label{i:p-radicable}
$\CF$ is $p$-radicable;
\item\label{i:F-in-Fin}
$\CF$ contains only finite subgroups;
\item\label{i:(F)-finite}
$\CF$ contains only finitely many conjugacy classes of subgroups;
\item\label{techA}
for every~$H\in\CF$ and for every $1\leq s\leq N+2$, the integral group homology $H_s(BZ_GH;\IZ)$ of the centralizer of~$H$ in~$G$ is an almost finitely generated abelian group.
\end{enumerate}
Then the assembly map~\eqref{eq:asbl-TC} is $\pi_n^\IQ$-injective for all~$-\infty<n\le N$.
\end{theorem}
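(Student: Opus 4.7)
My plan is to combine \autoref{pro-iso-TC} with a rational Milnor-$\limone$ analysis, using the four hypotheses to control the interplay between the homotopy limit over~$n$ that defines~$\TC$ and the orbit-category homotopy colimit that defines the source of~\eqref{eq:asbl-TC}.

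First, since by hypothesis~\ref{i:F-in-Fin} the family~$\CF$ consists only of finite subgroups, \autoref{iso-TC-finite-groups} applies to each $H\in\CF$ and, together with a transitivity-of-assembly argument, reduces the analysis to the assembly map for a family of cyclic subgroups inside~$G$. Invoking \autoref{pro-iso-TC} then yields a strict map of pro-spectra
\[
\bigl\{EG(\CF)_+ \sma_{\Or G}\TC^n(\spec{A}[\oid{G}{-}];p)\bigr\}_{n\in\IN}
\TO
\bigl\{\TC^n(\spec{A}[G];p)\bigr\}_{n\in\IN}
\]
that is a $\pi_*$-isomorphism at every level~$n$, with the $p$-radicability hypothesis~\ref{i:p-radicable} providing the compatibility needed for these two inputs to be coherently combined. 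Applying $\pi_*(-)\tensor_\IZ\IQ$ preserves this levelwise pro-isomorphism.

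Second, setting $S := EG(\CF)_+\sma_{\Or G}\TC(\spec{A}[\oid{G}{-}];p)$ and writing $S^n$ for its $\TC^n$-analogue, I would show that the natural comparison map $S\to\holim_n S^n$ is $\pi_m^\IQ$-injective for $m\le N$. By hypothesis~\ref{i:(F)-finite}, the orbit-category homotopy colimit defining~$S$ is a finite homotopy colimit, built from pieces involving $BZ_GH$ and $\TC(\spec{A}[H];p)$ indexed on finitely many representatives~$H$ of conjugacy classes in~$\CF$. For each such piece, an Atiyah-Hirzebruch-type spectral sequence with $E^2_{s,t}=H_s(BZ_GH;\pi_t\TC^n(\spec{A}[H];p))$ computes the relevant homotopy groups, and hypothesis~\ref{techA} forces $H_s(BZ_GH;\IZ)\tensor_\IZ\IQ$ to be a finite-dimensional $\IQ$-vector space for $1\le s\le N+2$, with the extra~$+2$ absorbing spectral-sequence edge effects around total degree~$N$. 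A Mittag-Leffler argument applied to the resulting pro-system then makes the source-side rational $\limone$-terms vanish in total degrees~$\le N$, giving the required injectivity of $\pi_m^\IQ(S)\to\pi_m^\IQ(\holim_n S^n)$.

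Combining these two steps with the target-side Milnor sequence, which identifies $\pi_m^\IQ\TC(\spec{A}[G];p)$ with $\pi_m^\IQ\holim_n\TC^n(\spec{A}[G];p)$, yields the desired $\pi_n^\IQ$-injectivity of~\eqref{eq:asbl-TC} for all $-\infty<n\le N$. The main obstacle is the rational $\limone$-vanishing of the second step: one must delicately track the pro-system of Atiyah-Hirzebruch spectral sequences, verify that the finite-dimensionality input of~\ref{techA}, in precisely the range $s\le N+2$, kills the relevant derived limits, and ensure that the finite hocolim afforded by~\ref{i:(F)-finite} commutes with the $\holim$ up to these controlled $\limone$-contributions. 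This is the subtle place where all four hypotheses are needed simultaneously.
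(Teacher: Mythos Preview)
Your overall architecture is the same as the paper's: factor the assembly map through the interchange map
\[
t\colon EG(\CF)_+\sma_{\Or G}\TC(\spec{A}[\oid{G}{-}];p)\TO\holim_n\Bigl(EG(\CF)_+\sma_{\Or G}\TC^n(\spec{A}[\oid{G}{-}];p)\Bigr),
\]
show that $t$ is a rational $\pi_n$-isomorphism for $n\le N$ using the finiteness hypotheses~\ref{i:F-in-Fin}--\ref{techA} (this is exactly \autoref{holim-smash}\ref{i:holim-smash-afg}, which cites~\cite{LRV}), and show that the remaining map $\holim(\asbl)$ is injective using~\ref{i:p-radicable}. Your Step~2 is a reasonable sketch of what \cite{LRV} does, and your Step~3 is fine once you drop the ``target-side Milnor sequence'' remark, since $\TC(\spec{A}[G];p)=\holim_n\TC^n(\spec{A}[G];p)$ by definition.

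There is, however, a genuine error in Step~1. Your assertion that the levelwise maps
\[
EG(\CF)_+\sma_{\Or G}\TC^n(\spec{A}[\oid{G}{-}];p)\TO\TC^n(\spec{A}[G];p)
\]
are $\pi_*$-isomorphisms is false in general: \autoref{pro-iso-TC} is stated for the family~$\Cyc$, and its proof (via \autoref{split-THH-fix}) shows that for an arbitrary $p$-radicable family~$\CF$ these levelwise maps are only \emph{split injective}; they are isomorphisms only when $\Cyc\subseteq\CF$. Since by~\ref{i:F-in-Fin} your~$\CF$ contains only finite subgroups, if $G$ has any element of infinite order then $\Cyc\not\subseteq\CF$ and the levelwise maps are genuinely not surjective (already for $G=\IZ$, $\CF=1$). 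Your detour through \autoref{iso-TC-finite-groups} and transitivity does not repair this: transitivity lets you replace~$\CF$ by the subfamily of finite cyclic subgroups in~$\CF$, but that is still not all of~$\Cyc$. The fix is exactly what the paper does: use \autoref{split-THH-fix} and diagram~\eqref{eq:holim-smash} directly to see that $\holim(\asbl)$ is split injective via the retraction $\pr_\CF$ onto $\THH_\CF$. Split injectivity of~$\holim(\asbl)$ combined with the rational $\pi_n$-isomorphism of~$t$ then gives the result; no levelwise isomorphism is needed or available.
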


\begin{remark}[failure of injectivity without finiteness assumptions]
Without assumption~\ref{techA} \autoref{inj-TC-rationally} would be false.
As a counterexample, consider the additive group of the rational numbers~$G=\IQ$ and the trivial family $\CF=1$.
Obviously, assumptions~\ref{i:p-radicable}, \ref{i:F-in-Fin}, and~\ref{i:(F)-finite} are satisfied, but~\ref{techA} is not.
The map~\eqref{eq:asbl-TC} becomes
\[
B\IQ_+\sma\TC(\spec{A};p)
\TO
\TC(\spec{A}[\IQ];p)
\,,
\]
and in~\cite{kc}*{Remark~3.7, pages~946--947} we show that this is \emph{not} $\pi_n^\IQ$-injective when~$\spec{A}$ is the sphere spectrum~$\spec{S}$.
\end{remark}

\begin{remark}[Thompson's group~$T$]
Let $\CF=\Fin$.
Then all the assumptions of \autoref{inj-TC-rationally} are satisfied if there is a universal space~$EG(\Fin)$ of finite type, but not vice versa; see e.g.~\cite{kc}*{Proposition~2.1, page~939}.
In~\cite{GV} it is proved that Thompson's group~$T$ of orientation preserving, piecewise linear, dyadic homeomorphisms of the circle satisfies~\ref{techA} for all~$N$, but not~\ref{i:(F)-finite}.
It is an interesting open question whether the assembly map~\eqref{eq:asbl-TC} is $\pi_n^\IQ$-injective for Thompson's group~$T$.
Notice that the main result of~\cite{kc}*{Theorem~1.13, page~935} gives an affirmative answer to the analogous question for the assembly map in connective algebraic $K$-theory when $\spec{A}=\IZ$ or~$\IS$, provided that a weak version of the Leopoldt-Schneider Conjecture in algebraic number theory holds for all cyclotomic fields.
\end{remark}

\begin{remark}[\BHM's functor~$C$]
The analog of \autoref{inj-TC-rationally} for the assembly map
\[
EG(\CF)_+\sma_{\Or G}\C(\spec{A}[\oid{G}{-}];p)
\TO
\C(\spec{A}[G];p)
\]
for \BHM's functor~$C$, a variant of~$\tc$, is proved in~\cite{kc}*{Theorem~1.19(ii), pages~937--938, Theorem~9.5, page~978}.
For~$C$ we only need to assume conditions \ref{i:F-in-Fin} and~\ref{techA}, and neither \ref{i:p-radicable} nor~\ref{i:(F)-finite} are required.
Moreover, assumption~\ref{techA} may be weakened by replacing $N+2$ with~$N+1$.
\end{remark}

\begin{addendum}[$\tr$]
\label{also-TR}
All results mentioned in this introduction (Theorems \ref{iso-TC-finite-groups}, \ref{pro-iso-TC}, \ref{inj-TC-examples}, \ref{not-surj-TC}, \ref{inj-TC-technical}, \ref{inj-TC-rationally}, and \autoref{TC(A[S3];p)}) hold also for~$\TR(-;p)$ instead of~$\TC(-;p)$.
For~$\TR(-;p)$, in \autoref{not-surj-TC} replace $\pi_{-1}$ with~$\pi_0$ (see \autoref{not-surj-TR-TC}), and in \autoref{inj-TC-rationally} assumption~\ref{techA} can be weakened by replacing $N+2$ with~$N+1$.
\end{addendum}


\subsection*{Acknowledgments}
We thank the referee for a detailed and thoughtful report.
We also thank the Hausdorff Research Institute for Mathematics in Bonn, where parts of this work were completed during the 2015 Trimester Program on ``Homotopy theory, manifolds, and field theories.''
We have been financially supported
by the first author's Leibniz Award, granted by the Deutsche Forschungsgemeinschaft,
and by his European Research Council Advanced Grant ``KL2MG-interactions'' (\#662400);
by the Collaborative Research Center~647 in Berlin;
and
by a grant from the Simons Foundation (\#419561, Marco Varisco).


\section{Preliminaries}

In this section we fix our notation and terminology, and we recall the definition of assembly maps.

\subsection{Spaces and spectra}
We work in the category~$\Top$ of compactly generated and weak Hausdorff spaces, which from now on we simply call spaces.
We let $\CT$ be the category of pointed spaces, and we write $\CW$ for the full subcategory of pointed spaces homeomorphic to countable CW complexes.
Given a group~$G$, we denote by $\Top^G$ the category of left $G$-spaces and $G$-equivariant maps; the discrete and the base pointed versions of this category are denoted $\Sets^G$ and~$\CT^G$, respectively.
A~finite cyclic group of order~$n$ is denoted~$C_n$.

Following~\cite{kc}*{Subsection~4I, page~951}, we write~$\IN\Sp$, $\Sigma\Sp$, and~$\CW\CT$ for the categories of naive spectra, symmetric spectra, and $\CW$-spaces, respectively.
All our spectra are defined using sequences of spaces, as opposed to simplicial sets.
We also consider the category~$\CW\CT^{S^1}$ of $S^1$-equivariant $\CW$-spaces, as our model of topological Hochschild homology naturally lives there.
We denote by $\Sigma\Sp\D\Cat$ the category of small symmetric spectral categories, i.e., categories enriched over the symmetric monoidal category~$\Sigma\Sp$.
We also need the following technical definition of \cplus{} symmetric ring spectra, which appears as a hypothesis in some of our results.

\begin{definition}
We say that a symmetric spectrum $\spec{E}$ is:
\begin{enumerate}

\item\emph{strictly connective}
if for every~$x\geq0$ the space $\spec{E}_x$ is $(x-1)$-connected;

\item\emph{convergent}
if there exists a non-decreasing function $\MOR{\lambda}{\IN}{\IZ}$ such that $\lim_{x\to\infty}\lambda(x)=\infty$ and the adjoint structure map $\spec{E}_x\TO\Omega\spec{E}_{x+1}$ is $(x+\lambda(x))$-connected for every~$x\geq0$;

\item\emph{well pointed}
if for every $x\in\IN$ the space~$\spec{E}_x$ is well pointed.

\end{enumerate}
\end{definition}

\begin{definition}[{\cplus[*]}]
\label{cplus}
We say that a symmetric ring spectrum~$\spec{A}$ is \emph{\cplus} if it is strictly connective, convergent, well pointed, and the unit map~$\spec{S}\TO\spec{A}$ induces a cofibration~$S^0\TO\spec{A}_0$.
\end{definition}

Any $(-1)$-connected $\Omega$-spectrum is strictly connective and convergent.
The sphere spectrum~$\spec{S}$ and suitable models for all Eilenberg-Mac Lane ring spectra of discrete rings are \cplus.

\subsection{$\THH$, $\TR$, and~$\TC$}
Given a symmetric ring spectrum~$\spec{A}$ or, more generally, a symmetric spectral category~$\spec{D}$, topological Hochschild homology defines an $S^1$-equivariant $\CW$-space~$\THH(\spec{D})$.
Some details of the construction are recalled in \autoref{PRO-ISO}; for more information, we refer to~\cite{kc}*{Section~6} for the specific model that we use, and to~\cite{Dundas} in general.

Fix a prime~$p$.
As $n\ge1$ varies, the $\Cp{n}$-fixed points of~$\THH(\spec{D})$ are related by maps
\[
\MOR{R,F}{\THH(\spec{D})^{\Cp{n}}}{\THH(\spec{D})^{\Cp{n-1}}}
\]
satisfying~$RF=FR$.
The Frobenius map~$F$ is the inclusion of fixed points;
the restriction map~$R$ is more complicated, and the essential ingredient for its construction is reviewed in \autoref{PRO-ISO}.

Following e.g.~\cite{H-survey}, we write
\begin{align*}
\TR^n(\spec{D};p)
&=
\THH(\spec{D})^{\Cp{n-1}}
,
\\
\TC^n(\spec{D};p)
&=
\hoeq\Bigl(
\MOR{R,F}{\THH(\spec{D})^{\Cp{n-1}}}{\THH(\spec{D})^{\Cp{n-2}}}
\Bigr)
.
\end{align*}
The maps
$\MOR{R}{\TR^{n+1}(\spec{D};p)}{\TR^{n}(\spec{D};p)}$
induce
$\MOR{R}{\TC^{n+1}(\spec{D};p)}{\TC^{n}(\spec{D};p)}$,
and we define
\[
\TR(\spec{D};p)=\holim_{n\in\IN}\TR^n(\spec{D};p)
\AND
\TC(\spec{D};p)=\holim_{n\in\IN}\TC^n(\spec{D};p)
\]
with the homotopy limit taken over the maps~$R$ above.
Equivalently up to $\pi_*$-isomorphism, one can define~$\TC(\spec{D};p)$ as
\[
\holim_{n\in\obj\RFcat}\THH(\spec{D})^{C_{p^n}}
,
\]
where $\RFcat$ is the category with set of objects~$\IN$, and where the morphisms from $m$ to~$n$ are the pairs $(i,j)\in\IN\times\IN$ with $i+j=m-n$.

Now fix a symmetric ring spectrum~$\spec{A}$.
Given a group~$G$ or more generally a groupoid~$\CG$, consider the symmetric spectral category~$\spec{A}[\CG]$ with set of objects $\obj\CG$ and morphism spectra $\spec{A}\sma\CG(x,y)_+$.
We thus get functors
\[
\THH(\spec{A}[-])
\,,\
\TR^n(\spec{A}[-];p)
\,,\
\TC^n(\spec{A}[-];p)
\,,\
\TR(\spec{A}[-];p)
\,,\
\TC(\spec{A}[-];p)
\]
from the category of groupoids to the category of naive spectra, and all these functors send equivalences of groupoids to $\pi_*$-isomorphisms.

\subsection{Assembly maps}
\label{ASBL}
Following the approach of~\cite{Davis-L}, the input for the construction of assembly maps is for us a functor
\[
\MOR{\T}{\Groupoids}{\IN\Sp}
\]
that preserves equivalences, i.e., that sends equivalences of groupoids to $\pi_*$\=/isomorphisms.
Given a group~$G$, consider the functor~$\MOR{\oid{G}{-}}{\Sets^G}{\Groupoids}$ that sends a $G$-set~$S$ to its action groupoid~$\oid{G}{S}$, with $\obj\oid{G}{S}=S$ and $\mor_{\oid{G}{S}}(s,s')=\SET{g\in G}{gs=s'}$.
Restricting to the orbit category~$\Or G$, i.e., the full subcategory of~$\Sets^G$ with objects~$G/H$ as $H$ varies among the subgroups of~$G$, we obtain the composition
\[
\begin{tikzcd}[cramped]
\Or G
\arrow[r, hook]
&
\Sets^G
\arrow[r, "\oid{G}{-}"]
&
\Groupoids
\arrow[r, "\T"]
&
\IN\Sp
\,.
\end{tikzcd}
\]
Finally, we take the left Kan extension of $\T(\oid{G}{-})$ along the full and faithful functor~$\iota\colon\Or{G}\hookrightarrow\Top^G$.
Given a $G$-space~$X$,
\[
\bigl(\Lan_\iota\T(\oid{G}{-})\bigr)
(X)
=
\smash{X_+\sma_{\Or G}\T(\oid{G}{-})}
\]
is defined as the coend of the functor
\begin{align*}
(\Or G)^\op\times\Or G
&\TO
\IN\Sp
\,,
\\
(G/H,G/K)
&\longmapsto
\map(G/H,X)^G_+\sma\T(\oid{G}{\,(G/K)})\cong X^H_+\sma\T(\oid{G}{\,(G/K)})
\,.
\end{align*}
Notice that $\pt_+\sma_{\Or G}\T(\oid{G}{-})\cong\T(G)$ and, for any subgroup~$H\leq G$, the fact that $\T$ preserves equivalences implies that $G/H_+\sma_{\Or G}\T(\oid{G}{-})$ is $\pi_*$-isomorphic to~$\T(H)$.

Now consider a family~$\CF$ of subgroups of~$G$ (i.e., a collection of subgroups closed under passage to subgroups and conjugates)
and consider a universal $G$-space $EG(\CF)$.
This is a $G$-CW~complex characterized up to $G$-homotopy equivalence by the property that, for any subgroup~$H\le G$, the $H$-fixed point space
\[
\bigl(EG(\CF)\bigr)^H
\,
\text{ is }
\begin{cases}
\text{empty}&\text{if $H\not\in\CF$;}\\
\text{contractible}&\text{if $H\in\CF$.}
\end{cases}
\]
The projection $EG(\CF)\TO\pt=G/G$ induces then the \emph{assembly map}
\[
\MOR{\asbl}{EG(\CF)_+\sma_{\Or G}\T(\oid{G}{-})}{\T(G)}
\,.
\]

Finally, we remark that the source of the assembly map is a model for
\[
\hocolim_{\substack{G/H\in\obj\Or G\\\text{s.t. }H\in\CF}}\T(\oid{G}{\,(G/H)})
\,,
\]
the homotopy colimit of the restriction of~$\T(\oid{G}{-})$ to the full subcategory of~$\Or G$ spanned by the objects $G/H$ with $H\in\CF$.


\section{Pro-isomorphism result}
\label{PRO-ISO}

In this section we prove \autoref{pro-iso-TC} and an important intermediate step, \autoref{split-THH-fix}, on which our other isomorphism and injectivity results are based.
The starting point is the following theorem from~\cite{kc}.

\begin{theorem}
\label{split-THH}
Let $G$ be a group, $\CF$ a family of subgroups of~$G$, $\spec{A}$ a symmetric ring spectrum, and $C$ a finite subgroup of~$S^1$.
\begin{enumerate}
\item
\label{i:split-THH}
Consider the following commutative diagram in~$\CW\CT^{S^1}$.
\[
\begin{tikzcd}[column sep=large]
\ds EG(\CF)_+\sma_{\Or G}\THH    (\spec{A}[\oid{G}{-}])
\arrow[r, "\asbl"]
\arrow[d, "\id\sma\pr_\CF"']
&
\THH    (\spec{A}[G])
\arrow[d, "\pr_\CF"]
\\
\ds EG(\CF)_+\sma_{\Or G}\THH_\CF(\spec{A}[\oid{G}{-}])
\arrow[r, "\asbl"']
&
\THH_\CF(\spec{A}[G])
\end{tikzcd}
\]
The left-hand map and the bottom map are $\pi_*$-isomorphisms of the underlying non-equivariant spectra.
If $\CF$ contains all cyclic groups, then the right-hand map is an isomorphism.
\item
\label{i:split-THH-sh-fix}
Assume that $\spec{A}$ is \cplus.
Consider the following commutative diagram in~$\CW\CT$.
\[
\hspace{2em}
\begin{tikzcd}[column sep=large]
\ds EG(\CF)_+\sma_{\Or G}\bigl(\sh^{ES^1_+}\THH    (\spec{A}[\oid{G}{-}])\bigr)^C
\arrow[r, "\asbl"]
\arrow[d, "\id\sma\pr_\CF"']
&
\bigl(\sh^{ES^1_+}\THH    (\spec{A}[G])\bigr)^C
\arrow[d, "\pr_\CF"]
\\
\ds EG(\CF)_+\sma_{\Or G}\bigl(\sh^{ES^1_+}\THH_\CF(\spec{A}[\oid{G}{-}])\bigr)^C
\arrow[r, "\asbl"']
&
\bigl(\sh^{ES^1_+}\THH_\CF(\spec{A}[G])\bigr)^C
\end{tikzcd}
\]
The left-hand map and the bottom map are $\pi_*$-isomorphisms.
If $\CF$ contains all cyclic groups, then the right-hand map is an isomorphism.
\end{enumerate}
\end{theorem}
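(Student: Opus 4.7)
The statement is quoted verbatim from \cite{kc}, so the plan is to assemble the main conceptual ingredients rather than to re-derive them from scratch. The heart of the matter is the conjugacy decomposition of topological Hochschild homology of a groupoid algebra: for any groupoid $\CG$ the cyclic bar construction splits $S^1$-equivariantly as a wedge
\[
\THH(\spec{A}[\CG]) \simeq \bigvee_{[\gamma]} \THH(\spec{A}[\CG],\gamma)
\]
indexed by conjugacy classes of self-loops $\gamma$ in $\CG$. Applied to an action groupoid $\oid{G}{S}$, such classes correspond to conjugacy classes of pairs $(s,g)$ with $gs=s$, and one defines $\THH_\CF$ as the sub-wedge of summands for which the cyclic subgroup $\langle\gamma\rangle$ belongs to $\CF$, with $\pr_\CF$ the corresponding projection.

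For part~(i), the last claim is immediate: if $\CF$ contains every cyclic subgroup then every summand in the decomposition survives, so the right vertical $\pr_\CF$ is the identity. For the left vertical and the bottom map, I would exploit that $EG(\CF)$ is built from $G$-cells of the form $G/H$ with $H\in\CF$, so the coend over $\Or G$ reduces, up to $\pi_*$-iso, to a diagram assembled from the $\THH(\spec{A}[H])$. Since $\CF$ is closed under subgroups, every element of such an $H$ generates a cyclic group already lying in $\CF$, whence each map $\THH(\spec{A}[H])\to\THH_\CF(\spec{A}[H])$ is an isomorphism. A cellular induction over the skeleta of $EG(\CF)$ then propagates this to the left vertical, and commutativity of the square forces the bottom map to be a $\pi_*$-iso as well.

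For part~(ii), the plan is to run the same argument after applying the functor $\bigl(\sh^{ES^1_+}(-)\bigr)^C$, and this is where the hypothesis that $\spec{A}$ be \cplus enters: strict connectivity, convergence, and well-pointedness ensure that this fixed-point model has the correct homotopy type and that the conjugacy decomposition is preserved by passage to $C$-fixed points. The main obstacle is precisely that, unlike in the non-equivariant setting of part~(i), genuine $C$-fixed points do not automatically commute with infinite wedges or with the coend over $\Or G$. I would handle this by filtering $EG(\CF)$ by its $G$-skeleta, using convergence of $\THH(\spec{A}[\oid{G}{-}])$ to reduce to one cell at a time, and then identifying the $C$-fixed points of each cell summand with the corresponding piece of $\THH_\CF(\spec{A}[H])^C$ by the same conjugacy-class bookkeeping as in part~(i). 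Once this commutation is established, the isomorphism and $\pi_*$-isomorphism claims in part~(ii) follow exactly as before.
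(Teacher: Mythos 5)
Your proposal correctly identifies the conjugacy decomposition of $\THH$ of a groupoid algebra as the central structural tool, and the argument for the left-hand vertical map is sound: for each orbit $G/H$ with $H\in\CF$, every self-loop in $\oid{G}{(G/H)}$ generates a cyclic group subconjugate to $H$, hence in $\CF$, so the projection $\pr_\CF$ on $\THH(\spec{A}[\oid{G}{\,(G/H)}])$ is an isomorphism; smashing cellwise over $\Or G$ with $EG(\CF)_+$ preserves $\pi_*$-isos, which handles the left vertical. The claim about the right-hand map when $\CF\supseteq\Cyc$ is also correct for the same reason.

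However, there is a genuine gap in the argument for the \emph{bottom} map. You write that ``commutativity of the square forces the bottom map to be a $\pi_*$-iso as well,'' but this is a logical non sequitur: knowing the left vertical is a $\pi_*$-iso and that the square commutes gives no information about the bottom map unless one already knows something about the top or right map — and the theorem makes no hypothesis on $\CF$ containing all cyclic groups in this part, so the right map is not controlled, and the top map (the assembly map for $\THH$) is precisely \emph{not} a $\pi_*$-iso in general. The fact that the assembly map for the $\CF$-part $\THH_\CF(\spec{A}[\oid{G}{-}])$ is a $\pi_*$-iso is the substantive content of \cite{kc}*{Theorem~6.1} and requires a separate argument: roughly, one shows that the $[c]$-summand of $\THH_\CF(\spec{A}[\oid{G}{S}])$ is built functorially from the $\langle c\rangle$-fixed points $S^{\langle c\rangle}$ together with the centralizer $Z_G\langle c\rangle$ action on them, and for $S=EG(\CF)$ with $\langle c\rangle\in\CF$ those fixed-point spaces are contractible, so the $[c]$-summand agrees up to $\pi_*$-iso with the corresponding summand for $S=\pt$. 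Without this step your proof of part~(i) is incomplete. Similarly, in part~(ii) your sketch does not identify the key technical input, namely the zig-zag of $\pi_*$-isomorphisms from \cite{RV} relating $(\THH_\CF(\spec{A}[\oid{G}{S}]))_{hC}$ and $(\sh^{ES^1_+}\THH_\CF(\spec{A}[\oid{G}{S}]))^C$ (an equivariant Adams isomorphism), which is how one transports the non-equivariant statement of part~(i) to the fixed-point setting; ``filtering by skeleta and using convergence'' is not a substitute for this. Note also that the paper itself does not re-prove this theorem but simply cites \cite{kc}*{Theorem~6.1 and Corollary~8.4}.
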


\begin{proof}
\ref{i:split-THH} is \cite{kc}*{Theorem~6.1, page~960}
and
\ref{i:split-THH-sh-fix} is \cite{kc}*{Corollary~8.4, pages~976--977}.
\end{proof}

We recall that the key to deduce \ref{i:split-THH-sh-fix} from~\ref{i:split-THH} is a natural zig-zag of $\pi_*$\=/isomorphisms, based on~\cite{RV}, between the underlying orthogonal spectra of
\[
\bigl(\THH_\CF(\spec{A}[\oid{G}{S}])\bigr)_{hC}
\AND
\bigl(\sh^{ES^1_+}\THH_\CF(\spec{A}[\oid{G}{S}])\bigr)^{C}
\]
that commutes with~$\pr_\CF$.
The technical definition of the equivariant shift~$\sh^{ES^1_+}\BX$ of an $S^1$-$\CW$-space~$\BX$ is given in~\cite{kc}*{Definition~4.10 and~4L, pages~953--954}.

We also review the definition of~$\THH_\CF$ and~$\pr_\CF$, since we need them explicitly in the proof of \autoref{split-THH-fix} below.
We begin with~$\THH$ itself, following the notation of~\cite{kc}*{Section~6, pages~960--961}.

Let $\CI$ be the category with objects the finite sets $x=\{1,2,\dotsc,x\}$ for all~$x\ge0$, and morphisms all the injective functions, and let $\CI^{[q]}=\CI^{q+1}$ for each~$q\in\IN$.
Given a symmetric spectral category~$\spec{D}$, there are functors
\begin{align*}
\MOR{cn_{[q]}\spec{D}}{\CI^{[q]}}{{}&\ \CT}
\,,
\\
\vec{x}=(x_0,\dotsc,x_q)
\longmapsto{}
&
\quad\bigvee_{\mathclap{\substack{d_0,\dotsc,d_q\\\text{in}\,\obj\spec{D}}}}\quad
\spec{D}(d_0,d_q)_{x_0}\sma
\spec{D}(d_1,d_0)_{x_1}\sma
\dotsb\sma
\spec{D}(d_q,d_{q-1})_{x_q}
\,;
\\[\medskipamount]
\MOR{ M_{[q]}\spec{D}}{\CI^{[q]}}{{}&\CW\CT}
\,,
\\
\vec{x}=(x_0,\dotsc,x_q)
\longmapsto{}
&
\map\bigl(cn_{[q]}\spec{S}(\vec{x}),\;-\sma cn_{[q]}\spec{D}(\vec{x})\bigr)=
\\
&=
\map\bigl(S^{x_0}\sma\dotsb\sma S^{x_q},\;-\sma cn_{[q]}\spec{D}(x_0,\dotsc,x_q)\bigr)
,
\end{align*}
where $-$ denotes the variable in~$\CW$.
Then $THH_{[q]}(\spec{D})$ is defined as the homotopy colimit of~$M_{[q]}\spec{D}$, and after geometric realization we obtain the functor
\[
\MOR{\THH(-)=\real{THH_\bullet(-)}}{\Sigma\Sp\D\Cat}{\CW\CT^{S^1}}
.
\]

Now we specialize to~$\spec{D}=\spec{A}[\oid{G}{S}]$ for a $G$-set~$S$.
Let $CN_\bullet(\oid{G}{S})$ be the cyclic nerve (compare e.g.~\cite{kc}*{Definition~5.1, page~956}), and let $\conj G$ be the set of conjugacy classes~$[g]$ of elements $g\in G$.
There are natural isomorphisms (see~\cite{kc}*{Lemma~6.4, page~962, and pages~957--958})
\[
cn_\bullet(\spec{A}[\oid{G}{S}])
\cong
\bigl(cn_\bullet\spec{A}\bigr)\sma CN_\bullet(\oid{G}{S})_+
\cong
\bigvee_{[c]\in\conj G}
\bigl(cn_\bullet\spec{A}\bigr)\sma CN_{\bullet,[c]}(\oid{G}{S})_+
\,,
\]
where $CN_{\bullet,[c]}(\oid{G}{S})$ denotes the preimage of~$[c]$ under the map of cyclic sets
\begin{align}
\label{eq:maptoconjG}
CN_{\bullet}(\oid{G}{S}) &\TO \conj G
\\[-2.5ex]
\raisebox{2ex}{\begin{tikzcd}[ampersand replacement=\&]
s_0
\arrow[rrrr, rounded corners,
       to path={    (\tikztostart.south)
                 |- +(3,-.3) [at end]\tikztonodes
                 -| (\tikztotarget.south)},
      "g_0"']
\&
s_1
\arrow[l, "g_1"']
\&
\dotsb
\arrow[l, "g_2"']
\&
s_{q-1}
\arrow[l, "g_{q-1}"']
\&
s_q
\arrow[l, "g_q"']
\end{tikzcd}}
&
\longmapsto
[g_0 g_1 \dotsb g_q ]
\,.
\hspace{8em}
\end{align}
Here $\conj G$ is viewed as a constant cyclic set.
The preimage of
\[
\conj_{\CF} G = \SET{[c]}{ \langle c \rangle \in \CF } \subseteq \conj G
\]
under~\eqref{eq:maptoconjG} is denoted~$CN_{\bullet,\CF}(\oid{G}{S})$.

Finally, the $\CF$-parts of~$\THH$ are defined as
\[
\smash{\THH_\CF(\spec{A}[\oid{G}{S}])
=
\real[\big]{THH_{\bullet,\CF}(\spec{A}[\oid{G}{S}])}
=
\real[\Big]{\hocolim_{\CI^\bullet}M_{\bullet,\CF}(\spec{A}[\oid{G}{S}])}
\,,}
\]
where
\begin{align*}
\MOR{M_{[q],\CF}(\spec{A}[\oid{G}{S}])}{\CI^{[q]}&}{\CW\CT}
\,,
\\
\vec{x}
&
\longmapsto
\map\bigl(cn_{[q]}\spec{S}(\vec{x}),\;-\sma cn_{[q]}\spec{A}(\vec{x})\sma CN_{[q],\CF}(\oid{G}{S})_+\bigr)
\,.
\end{align*}
Notice that there are maps
\[
\begin{tikzcd}
CN_{\bullet}    (\oid{G}{S})_+
\arrow[r, shift left, "\pr_{\CF}"]
&
CN_{\bullet,\CF}(\oid{G}{S})_+
\arrow[l, shift left, "\ii_{\CF}"]
\text{\,,\quad}
\THH    (\spec{A}[\oid{G}{S}])
\arrow[r, shift left, "\pr_{\CF}"]
&
\THH_\CF(\spec{A}[\oid{G}{S}])
\arrow[l, shift left, "\ii_{\CF}"]
\end{tikzcd}
\]
satisfying $\pr_{\CF}\circ\ii_{\CF}=\id$.

The main new result here is the following.

\begin{theorem}
\label{split-THH-fix}
Assume that the family~$\CF$ is $p$-radicable; see \autoref{p-radicable}.
Assume that the symmetric ring spectrum~$\spec{A}$ is \cplus.
For each~$n\geq0$ consider the following commutative diagram in~$\CW\CT$.
\[
\hspace{2em}
\begin{tikzcd}[column sep=large]
\ds EG(\CF)_+\sma_{\Or G}\bigl(\THH    (\spec{A}[\oid{G}{-}])\bigr)^{\Cp{n}}
\arrow[r, "\asbl"]
\arrow[d, "\id\sma\pr_\CF"']
&
\bigl(\THH    (\spec{A}[G])\bigr)^{\Cp{n}}
\arrow[d, "\pr_\CF"]
\\
\ds EG(\CF)_+\sma_{\Or G}\bigl(\THH_\CF(\spec{A}[\oid{G}{-}])\bigr)^{\Cp{n}}
\arrow[r, "\asbl"']
&
\bigl(\THH_\CF(\spec{A}[G])\bigr)^{\Cp{n}}
\end{tikzcd}
\]
The left-hand map and the bottom map are $\pi_*$-isomorphisms.
If $\CF$ contains all cyclic groups, then the right-hand map is an isomorphism.
\end{theorem}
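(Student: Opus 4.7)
My plan is to reduce the right-hand and left-hand assertions to formal consequences of the conjugacy-class decomposition of the cyclic nerve, and to prove the bottom assembly $\pi_*$-isomorphism by induction on~$n$ using the fundamental cofibration sequence of~\cite{BHM}.

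For the right-hand map, if $\CF$ contains every cyclic subgroup then $\conj_\CF G = \conj G$, so $\pr_\CF$ is the identity at the level of cyclic nerves and hence a strict isomorphism after forming~$\THH$ and taking $\Cp{n}$-fixed points. For the left-hand map, whenever $H \in \CF$ and $(s_0, g_0, s_1, g_1, \dotsc, s_q, g_q)$ is a $q$-simplex of $CN_q(\oid{G}{(G/H)})$, the product $g_0 g_1 \cdots g_q$ stabilizes $s_0 \in G/H$, so $\langle g_0 g_1 \cdots g_q\rangle$ sits inside a conjugate of~$H$ and therefore belongs to~$\CF$. Thus for every $H \in \CF$ the map $\pr_\CF$ from $\THH(\spec{A}[\oid{G}{(G/H)}])$ to $\THH_\CF(\spec{A}[\oid{G}{(G/H)}])$ is already a strict $S^1$-equivariant isomorphism of $\CW$-spaces, which remains an isomorphism on $\Cp{n}$-fixed points and consequently induces a $\pi_*$-isomorphism on the $\Or G$-indexed homotopy colimit defining the source of the assembly map. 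This is essentially the same argument as in the proof of \autoref{split-THH}\ref{i:split-THH}, transplanted to fixed-point spectra.

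The bottom $\pi_*$-isomorphism I prove by induction on~$n$, with base case $n = 0$ given by \autoref{split-THH}\ref{i:split-THH}. For the inductive step I invoke the fundamental cofibration sequence
\[
\THH(\spec{D})_{h\Cp{n}}
\TO
\THH(\spec{D})^{\Cp{n}}
\TO[R]
\THH(\spec{D})^{\Cp{n-1}}
\,,
\]
which is natural in the symmetric spectral category~$\spec{D}$ and is available for $\spec{D}=\spec{A}[\oid{G}{S}]$ thanks to the strict connectivity and convergence built into the \cplus{} hypothesis on~$\spec{A}$. The decisive point is that this cofibration sequence restricts to~$\THH_\CF$ precisely when $\CF$ is $p$-radicable: the decomposition $\THH(\spec{A}[\oid{G}{S}]) = \bigvee_{[c] \in \conj G} \THH_{[c]}(\spec{A}[\oid{G}{S}])$ is $S^1$-equivariant, and the cyclotomic structure identification $\Phi^{C_p}\THH(\spec{A}[\oid{G}{S}]) \simeq \THH(\spec{A}[\oid{G}{S}])$ realizes, via the $p$-fold edgewise subdivision of the cyclic nerve, the map of conjugacy classes $[c] \mapsto [c^p]$. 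Consequently $R$ sends the $[c]$-summand of its source into the $[c^p]$-summand of its target, and preserves the $\CF$-indexed summand if and only if $\langle c^p\rangle \in \CF \Rightarrow \langle c\rangle \in \CF$, which is exactly \autoref{p-radicable}; the analogous statement for the norm map on homotopy orbits is trivial, as that map respects the conjugacy-class decomposition componentwise. Applying the left Kan extension $EG(\CF)_+ \sma_{\Or G} (-)$, which preserves homotopy cofibration sequences, and comparing with the cofibration sequence for $\THH_\CF(\spec{A}[G])$ itself via the assembly maps, I obtain a map of cofibration sequences in which the homotopy-orbits column is a $\pi_*$-isomorphism by \autoref{split-THH}\ref{i:split-THH-sh-fix} applied with $C = \Cp{n}$ (using the natural zig-zag between $X_{h\Cp{n}}$ and $(\sh^{ES^1_+} X)^{\Cp{n}}$ that commutes with~$\pr_\CF$), while the $\Cp{n-1}$-fixed-points column is a $\pi_*$-isomorphism by the inductive hypothesis. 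The five-lemma then forces the $\Cp{n}$-fixed-points column to be a $\pi_*$-isomorphism, completing the induction.

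The main technical hurdle is verifying rigorously that the cyclotomic structure map on $\THH$ of a groupoid algebra realizes the operation $[c] \mapsto [c^p]$ on conjugacy classes, as this is what forces the $p$-radicability hypothesis to appear precisely where it does and in the direction that it does. Once this compatibility is settled and the restricted cofibration sequence for $\THH_\CF$ is constructed, the remainder is a routine five-lemma bootstrap from \autoref{split-THH}.
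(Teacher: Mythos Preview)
Your argument is correct and follows the same strategy as the paper: induction on~$n$ via the fundamental fibration sequence, with \autoref{split-THH} supplying the base case and the homotopy-orbit term, the $p$-radicability hypothesis ensuring that $R$ restricts to $\THH_\CF$, and the Five Lemma closing the induction. Your direct argument for the left-hand map (that $\pr_\CF$ is already a strict $S^1$-equivariant isomorphism on each $\THH(\spec{A}[\oid{G}{(G/H)}])$ for $H\in\CF$, hence on fixed points) is slightly cleaner than running the induction there too.

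One slip to fix: the direction you assign to $R$ on conjugacy classes is backwards. The diagonal $\Delta$ takes a simplex in class $[c]$ to its $p$-fold repetition in class $[c^p]$, and $R$ is built from $\Delta^{-1}$; hence $R$ sends the $[d]$-summand of its source into the wedge of $[c]$-summands with $[c^p]=[d]$, not the $[c]$-summand into the $[c^p]$-summand. This is exactly why preserving $\THH_\CF$ requires $\langle c^p\rangle\in\CF\Rightarrow\langle c\rangle\in\CF$: with your stated direction that implication would be automatic and the $p$-radicability hypothesis superfluous. The paper makes this precise via the bijection
\[
\MOR[\cong]{\Delta_{\oid{G}{S},\CF}}{CN_{p^{n-1}[q],\sqrt[p]{\CF}}(\oid{G}{S})}{\bigl(CN_{p^n[q],\CF}(\oid{G}{S})\bigr)^{C_p}}
\]
and the observation that $CN_{\bullet,\CF}=CN_{\bullet,\sqrt[p]{\CF}}$ exactly when $\CF$ is $p$-radicable.
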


\begin{proof}
We want to use the natural stable homotopy fibration sequence
\begin{equation}
\label{eq:hobif(R)}
\bigl(\sh^{ES^1_+}\THH(\spec{A}[\oid{G}{S}])\bigr)^{C_{p^{n  }}}
\xrightarrow{\pr_*}
                  \THH(\spec{A}[\oid{G}{S}])^{C_{p^{n  }}}
\TO[R]
                  \THH(\spec{A}[\oid{G}{S}])^{C_{p^{n-1}}}
\end{equation}
from~\cite{kc}*{Theorem~7.2, page~969} in order to deduce this result inductively  from~\autoref{split-THH}.
The problem is that the map
\begin{equation}
\label{eq:R}
\MOR{R}{\THH(\spec{A}[\oid{G}{S}])^{\Cp{n}}}{\THH(\spec{A}[\oid{G}{S}])^{\Cp{n-1}}}
\end{equation}
does not restrict in general to the $\CF$-parts of~$\THH$; see~\cite{kc}*{Warning~7.13, page~971}.
We explain how the assumption that $\CF$ is $p$-radicable solves this problem.

The key ingredient for the definition of the map~$R$ in~\eqref{eq:R} is given by the following two maps; compare~\cite{kc}*{page~970}.
\begin{equation}
\label{eq:res-to-fix}
\begin{tikzcd}[row sep=scriptsize]
\map\Bigl(cn_{p^{n}[q]}\spec{S}(p^{n}\vec{x}),\;-\sma cn_{p^{n}[q]}\spec{A}(p^{n}\vec{x})\sma CN_{p^{n}[q]}(\oid{G}{S})_+\Bigr)^{C_{p^{n}}}
\arrow[d, "\ f\mapsto f^{C_p}", shorten <=-1ex, shorten >=-1ex]
\\
\map\Bigl(\bigl(cn_{p^{n}[q]}\spec{S}(p^{n}\vec{x})\bigr)^{C_p},\;-\sma \bigl(cn_{p^{n}[q]}\spec{A}(p^{n}\vec{x})\bigr)^{C_p}\sma\bigl(CN_{p^{n}[q]}(\oid{G}{S})\bigr)^{C_p}_+\Bigr)^{C_{p^{n}}/C_p}
\\
\map\Bigl(cn_{p^{n-1}[q]}\spec{S}(p^{n-1}\vec{x}),\;-\sma cn_{p^{n-1}[q]}\spec{A}(p^{n-1}\vec{x})\sma CN_{p^{n-1}[q]}(\oid{G}{S})_+\Bigr)^{C_{p^{n-1}}}
\arrow[u, "\cong\ ", "\ \Delta"', shorten <=-1ex, shorten >=-1ex]
\end{tikzcd}
\end{equation}
Here $-$ denotes the variable in~$\CW$, and $p^n[q]$ denotes the concatenation of~$[q]$ with itself $p^n$ times, arising from the $p^n$-fold edgewise subdivision.
Note that $p^n[q]=[p^n(q+1)-1]$. 

The first map in~\eqref{eq:res-to-fix} is given by restricting to the $C_p$-fixed points, using the fact that the action on~$-$ is trivial.
For the second map, notice that there is a $C_{p^n}/C_p$-equivariant bijection
\begin{align}
\label{eq:Delta-map-CN}
\MOR[\cong]{\Delta_{\oid{G}{S}}}%
{CN_{p^{n-1}[q]}(\oid{G}{S})
&}{\bigl(CN_{p^{n}[q]}(\oid{G}{S})\bigr)^{C_p}}
,
\\
(g_0,\dotsc,g_r)&\longmapsto(g_0,\dotsc,g_r,g_0,\dotsc,g_r,\dotsc,g_0,\dotsc,g_r)
\,,
\\\shortintertext{where $r=p^{n-1}(q+1)-1$, as well as $C_{p^n}/C_p$-equivariant homeomorphisms}
\MOR[\cong]{\Delta_{\spec{A}}}%
{cn_{p^{n-1}[q]}\spec{A}(p^{n-1}\vec{x})
&}{\bigl(cn_{p^{n}[q]}\spec{A}(p^{n}\vec{x})\bigr)^{C_p}}
.
\\\shortintertext{In the case $\spec{A}=\spec{S}$, we have}
\MOR[\cong]{\Delta_{\spec{S}}}%
{(S^{x_0} \wedge \dots \wedge S^{x_{q}})^{\wedge p^{n-1}}
&}{((S^{x_0} \wedge \dots \wedge S^{x_{q}})^{\wedge p^n})^{C_p}}
.
\end{align}
The homeomorphisms $\Delta_{\spec{S}}$, $\Delta_{\spec{D}}$, and~$\Delta_{\oid{G}{S}}$, together with the obvious identification $C_{p^n}/C_p \cong C_{p^{n-1}}$ that takes every element of order~$p^n$ on the circle to its $p$-th power, induce the homeomorphism~$\Delta$ in~\eqref{eq:res-to-fix}.
Finally, the key map needed for the definition of~\eqref{eq:R} is defined as the composition of the top map with the inverse of the bottom map in~\eqref{eq:res-to-fix}.

The crucial observation now is that, given an arbitrary family~$\CF$, the bijection~$\Delta_{\oid{G}{S}}$ in~\eqref{eq:Delta-map-CN} induces a bijection
\[
\MOR[\cong]{\Delta_{\oid{G}{S},\CF}}%
{CN_{p^{n-1}[q],\!\sqrt[p]{\CF}}(\oid{G}{S})}%
{\bigl(CN_{p^{n}[q],\CF}(\oid{G}{S})\bigr)^{C_p}}
,
\]
where $CN_{\bullet,\!\sqrt[p]{\CF}}(\oid{G}{S})$ is defined as the preimage under~\eqref{eq:maptoconjG} of
\[
\SET{[c]}{ \langle c^p \rangle \in \CF } \subseteq \conj G
\,.
\]
We always have that $CN_{\bullet,{\CF}\vphantom{\sqrt[p]{\CF}}}(\oid{G}{S})\subseteq CN_{\bullet,\!\sqrt[p]{\CF}}(\oid{G}{S})$, but equality holds if and only if the family $\CF$ is $p$-radicable.
Therefore we see that the map~$R$ restricts to a map
\[
\MOR{R_\CF}{\THH_\CF(\spec{A}[\oid{G}{S}])^{\Cp{n}}}{\THH_\CF(\spec{A}[\oid{G}{S}])^{\Cp{n-1}}}
\]
if $\CF$ is $p$-radicable.

Now assume that this is the case.
Since $\Delta_{\oid{G}{S},\CF}\circ\pr_\CF=\pr_\CF\circ\Delta_{\oid{G}{S}}$ we obtain that $R_\CF\circ\pr_\CF=\pr_\CF\circ R$, and analogously for~$\ii_\CF$.
Also the natural map~$\pr_*$ in~\eqref{eq:hobif(R)} commutes with $\pr_\CF$ and~$\ii_\CF$.
Then, since retracts preserve stable homotopy fibration sequences, from~\eqref{eq:hobif(R)} we obtain for each~$n\geq1$ a stable homotopy fibration sequence in~$\CW\CT$
\[
\bigl(\sh^{ES^1_+}\THH_\CF(\spec{A}[\oid{G}{S}])\bigr)^{C_{p^{n  }}}
\xrightarrow{\pr_*}
                  \THH_\CF(\spec{A}[\oid{G}{S}])^{C_{p^{n  }}}
\xrightarrow{R_\CF}
                  \THH_\CF(\spec{A}[\oid{G}{S}])^{C_{p^{n-1}}}
\]
which is natural in~$S$.
Each of the claimed $\pi_*$-isomorphisms then follows inductively from this, \autoref{split-THH}, and the Five Lemma.
\end{proof}

\begin{proof}[Proof of \autoref{pro-iso-TC}]
From \autoref{split-THH-fix} we deduce the analogous statement for $\TC^n$ as follows.
Using the natural $\pi_*$-isomorphism $\Sigma\hoeq\TO\hocoeq$ and the fact that $\Sigma$ commutes with smash products over the orbit category, it is enough to consider $\hocoeq(R,F)$.
Since the maps $R$ and~$F$ commute with~$\pr_\CF$, and homotopy colimits preserve $\pi_*$-isomorphisms and commute with smash products over the orbit category, the statement for~$\hocoeq(R,F)$ is an immediate corollary of \autoref{split-THH-fix}.
Finally, notice that the family~$\Cyc$ is $p$-radicable.
\end{proof}


\section{Isomorphism and injectivity results}

This section is devoted to the deduction from \autoref{split-THH-fix} of Theorems~\ref{iso-TC-finite-groups}, \ref{inj-TC-technical}, and~\ref{inj-TC-rationally}.
In order to use \autoref{split-THH-fix} to obtain statements about~$\TC$, we need to study the following commutative diagram in~$\CW\CT$.
Assume that the family~$\CF$ is $p$-radicable.
\begin{equation}
\label{eq:holim-smash}
\begin{tikzcd}[column sep=4em]
\ds EG(\CF)_+\sma_{\Or G}\TC(\spec{A}[\oid{G}{-}];p)
\arrow[r, "\asbl"]
\arrow[d, "t"']
&
\TC     (\spec{A}[G];p)
\arrow[d, equal]
\\
\ds\holim_\RFcat\Bigl(
EG(\CF)_+\sma_{\Or G}\bigl(\THH    (\spec{A}[\oid{G}{-}])\bigr)^{C_{p^n}}
\Bigr)
\arrow[r, "\holim(\asbl)", "\ts\three"']
\arrow[d, "\holim(\id\sma\pr_\CF)"', "\ts\one"]
&
\ds\holim_\RFcat
\THH    (\spec{A}[G];p)^{C_{p^n}}
\arrow[d, "\pr_\CF"]
\\
\ds\holim_\RFcat\Bigl(
EG(\CF)_+\sma_{\Or G}\bigl(\THH_\CF(\spec{A}[\oid{G}{-}])\bigr)^{C_{p^n}}
\Bigr)
\arrow[r, "\holim(\asbl)"', "\ts\two"]
&
\ds\holim_\RFcat
\THH_\CF(\spec{A}[G];p)^{C_{p^n}}
\end{tikzcd}
\hspace{-.2ex}
\end{equation}
The bottom square is obtained by taking the homotopy limit of the diagrams from \autoref{split-THH-fix}.
Therefore the maps $\one$ and~$\two$ are $\pi_*$-isomorphisms, and so $\three$ is split injective.
Moreover, if $\Cyc\subseteq\CF$ then $\pr_\CF$ is an isomorphism and so $\three$ is a $\pi_*$-isomorphism.
It remains to analyze the map~$t$, the natural map that interchanges the order of smashing over~$\Or G$ and taking $\holim_\RFcat$.

\begin{theorem}
\label{holim-smash}
Consider the map~$t$ in diagram~\eqref{eq:holim-smash}.
\begin{enumerate}
\item\label{i:holim-smash-fin-type}
If $EG(\CF)$ is of finite type then~$t$ is a $\pi_*$-isomorphism.
\item\label{i:holim-smash-afg}
Let $N\ge0$ be an integer.
Assume that conditions~\ref{i:F-in-Fin}, \ref{i:(F)-finite}, and~\ref{techA} in \autoref{inj-TC-rationally} are satisfied.
Then~$t$ is a $\pi_n^\IQ$-isomorphism for all~$n\le N$.
\end{enumerate}
\end{theorem}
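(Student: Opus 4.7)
The map~$t$ compares
\[
EG(\CF)_+\sma_{\Or G}\holim_\RFcat\bigl(\THH(\spec{A}[\oid{G}{-}])\bigr)^{\Cp{n}}
\AND
\holim_\RFcat\Bigl(EG(\CF)_+\sma_{\Or G}\bigl(\THH(\spec{A}[\oid{G}{-}])\bigr)^{\Cp{n}}\Bigr),
\]
and the plan in both parts is to analyze the equivariant cellular filtration of~$EG(\CF)$ and exploit the fact that finite homotopy colimits commute with the homotopy limit of a tower of spectra.

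For part~\ref{i:holim-smash-fin-type}, let $X^{(k)}$ denote the equivariant $k$-skeleton of~$EG(\CF)$. The finite-type hypothesis ensures that each cofiber $X^{(k)}/X^{(k-1)}$ is a finite wedge of the form $\bigvee_{\alpha\in I_k}(G/H_\alpha)_+\sma S^k$ with $I_k$ finite and $H_\alpha\in\CF$. For each such finite~$X^{(k)}$, the functor $X^{(k)}_+\sma_{\Or G}(-)$ is built by iterating finitely many homotopy cofiber sequences out of the evaluation functors $(G/H_\alpha)_+\sma_{\Or G}(-)\simeq(-)(G/H_\alpha)$, so it commutes with $\holim_\RFcat$. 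To pass from $X^{(k)}$ to $EG(\CF)$, I use that $(-)_+\sma_{\Or G}F$ commutes with the sequential homotopy colimit along the cellular inclusions, and then invoke the Milnor $\lim/\lim^1$ sequence for $\holim_\RFcat$. The \cplus{} hypothesis on~$\spec{A}$ guarantees that the towers involved are uniformly bounded below in each cellular degree, so the Mittag-Leffler condition holds and $\hocolim_k$ commutes with $\holim_\RFcat$; composing these identifications yields the claim.

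For part~\ref{i:holim-smash-afg}, I replace the finite-type assumption by a spectral sequence argument after rationalization. The equivariant Atiyah-Hirzebruch spectral sequence for the cellular filtration of~$EG(\CF)$ applied to the source of~$t$ has an $E^2$-page of the form
\[
E^2_{s,t}\cong\bigoplus_{(H)}H_s\bigl(BZ_GH;\pi_t\TC(\spec{A}[H];p)\bigr),
\]
where $(H)$ runs over the conjugacy classes of subgroups in~$\CF$. This is a finite direct sum by~\ref{i:(F)-finite}, and every $H\in\CF$ is finite by~\ref{i:F-in-Fin}. An analogous spectral sequence converges to the homotopy groups of the target of~$t$, with coefficients replaced by $\pi_t$ of the corresponding $\holim_\RFcat$. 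Condition~\ref{techA} makes $H_s(BZ_GH;\IZ)$ almost finitely generated for $s\le N+2$, and hence finitely generated after tensoring with~$\IQ$; for finitely generated abelian groups, $(-)\tensor\IQ$ commutes with sequential $\lim$ and annihilates $\lim^1$. This identifies the rationalized $E^2$-pages of the two spectral sequences in total degree $\le N$ and gives the claimed $\pi_n^\IQ$-isomorphism.

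The main obstacle is the bookkeeping in part~\ref{i:holim-smash-afg}: one must verify that, in total degree~$\le N$, the contribution of the Milnor $\lim^1$ term from the tower over~$\RFcat$ together with the Atiyah-Hirzebruch edge effects only ever involve homological degrees $s\le N+2$, which is precisely the range demanded by hypothesis~\ref{techA}. A secondary difficulty in part~\ref{i:holim-smash-fin-type} is making the Mittag-Leffler interchange of $\hocolim_k$ and $\holim_\RFcat$ sufficiently uniform across all cellular degrees, which relies crucially on the connectivity and convergence properties built into the definition of a \cplus{} symmetric ring spectrum.
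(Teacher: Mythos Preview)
Both parts of your argument contain genuine gaps.

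For part~\ref{i:holim-smash-fin-type}, you claim that the \cplus{} hypothesis, by making the towers uniformly bounded below, forces the Mittag--Leffler condition and hence allows $\hocolim_k$ to commute with~$\holim_\RFcat$. But bounded-below does not imply Mittag--Leffler: the tower $\IZ\xleftarrow{\,p\,}\IZ\xleftarrow{\,p\,}\dotsb$ sits in degree~$0$ yet has nonzero~$\lim^1$. Your outline can in fact be rescued by a different mechanism---the $(-1)$-connectivity of each $(\THH(\spec{A}[\oid{G}{-}]))^{C_{p^n}}$ makes the skeletal colimit stabilize on~$\pi_m$ at a stage depending only on~$m$ and not on the tower variable, after which a Milnor-sequence plus five-lemma argument gives the interchange---but that is not the argument you wrote. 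The paper takes a different and cleaner route: following~\cite{LRV}, one reduces $\holim_\RFcat$ to products and compares Atiyah--Hirzebruch spectral sequences for the cellular filtration of~$EG(\CF)$; the finite-type hypothesis enters precisely as the statement that the cellular $\IZ\Or G$-chain complex is degreewise finitely generated free, so tensoring with it commutes with products and the map of $E^2$-pages is an isomorphism.

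For part~\ref{i:holim-smash-afg}, your key algebraic claim---that for towers of finitely generated abelian groups $(-)\tensor\IQ$ commutes with~$\lim$ and annihilates~$\lim^1$---is again refuted by the tower $\IZ\xleftarrow{\,p\,}\IZ\xleftarrow{\,p\,}\dotsb$: here $\lim=0$ while the rationalized tower has $\lim\cong\IQ$, and $\lim^1\cong\IZ_p/\IZ$ is not rationally trivial. The paper instead verifies the hypotheses of~\cite{LRV}*{Addendum~1.3}; the decisive structural input you are missing is that $\RFcat$ has a $2$-dimensional model for~$E\RFcat$, which bounds the cohomological depth of~$\holim_\RFcat$ by two. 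This, together with the $(-1)$-connectivity just mentioned, is exactly the source of the shift from~$N$ to~$N+2$ in assumption~\ref{techA}, which your sketch acknowledges but does not explain.
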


\begin{proof}
This question is studied in detail in~\cite{LRV}.

\ref{i:holim-smash-fin-type} is unfortunately not stated explicitly in~\cite{LRV}, but it follows from
the inductive argument given in~\cite{LRV}*{Section~5}.
The only difference is that the $G$-CW~complex~$X$ there is now assumed to be of finite type, and therefore the proof of~\cite{LRV}*{Lemma~5.1} simplifies drastically, as we proceed to explain.
The proof remains unchanged until the discussion of the Atiyah-Hirzebruch spectral sequence.
But then one shows directly that the maps
\begin{equation}
\label{eq:sseq}
H_p^{\IZ\Or G} \Bigl( X ; \smallprod_{i \in I} \pi_q ( \BE (c_i) ) \Bigr)
\TO
\smallprod_{i \in I}  H_p^{\IZ\Or G} \bigl( X ;  \pi_q ( \BE (c_i) ) \bigr)
\end{equation}
induced from the projections $\prod_{i \in I} \pi_q ( \BE (c_i) ) \TO \pi_q ( \BE (c_i) )$ are all isomorphisms.
This follows because now the cellular $\IZ\Or G$-chain complex $C_{\ast}^{\IZ\Or G} ( X)$ is in each degree a finitely generated free $\IZ\Or G$-module, and hence the natural map
\[
C_{\ast}^{\IZ\Or G} ( X) \tensor_{\IZ\Or G} \smallprod_{i \in I} \pi_q ( \BE (c_i) )
\TO
\smallprod_{i \in I} C_{\ast}^{\IZ\Or G} ( X) \tensor_{\IZ\Or G} \pi_q ( \BE (c_i) )
\]
is an isomorphism.
Since $\prod_{i \in I}$ preserves exact sequences, the isomorphism~\eqref{eq:sseq} follows, completing the proof of~\ref{i:holim-smash-fin-type}.

For~\ref{i:holim-smash-afg}, we now verify the assumptions (A) to~(D) of~\cite{LRV}*{Addendum~1.3, page~140}, which imply that~$\pi_n(t)$ is an almost isomorphism, and so in particular a rational isomorphism, for all~$n\leq N$.
For assumption~(A), the category~$\CC=\RFcat$ has a $2$-dimensional model for~$E\RFcat$ by~\cite{LRV}*{Proposition~7.3, page~162}.
For~(B), the fundamental fibration sequence (see e.g.~\cite{kc}*{Theorem~8.1(i), page~973}) implies inductively that $(\THH(\spec{A}[\oid{G}{-}]))^{C_{p^n}}$ is always $(-1)$-connected.
For $X=EG(\CF)$, (C) is implied by our assumptions on~$\CF$.
And finally (D) follows from our assumptions on~$H_s(BZ_GH;\IZ)$ combined with~\cite{LRV}*{Proposition~1.7, page~142}.
\end{proof}

Combining \autoref{split-THH-fix}, diagram~\eqref{eq:holim-smash}, and \autoref{holim-smash}\ref{i:holim-smash-fin-type} we immediately obtain the following corollary.
Notice that \autoref{inj-TC-technical}\ref{i:split-inj-TC-technical} is a special case of this.

\begin{corollary}
\label{split-TC-radicable-fin-type}
If~$\CF$ is $p$-radicable and there is a universal space~$EG(\CF)$ of finite type,
then the assembly map
\[
EG(\CF)_+\sma_{\Or G}\TC(\spec{A}[\oid{G}{-}];p)
\TO
\TC(\spec{A}[G];p)
\]
is split injective.
If $\CF$ contains all cyclic groups, then it is a $\pi_*$-isomorphism.
\end{corollary}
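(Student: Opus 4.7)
The plan is to assemble the statement by diagram-chasing the big commutative diagram~\eqref{eq:holim-smash}, which was set up for precisely this purpose, using the inputs from \autoref{split-THH-fix} and \autoref{holim-smash}\ref{i:holim-smash-fin-type}. First I would invoke \autoref{split-THH-fix}: since $\CF$ is $p$-radicable, the left-hand vertical (induced by $\id\sma\pr_\CF$) and the bottom horizontal (the assembly map into the $\CF$-part) are $\pi_*$-isomorphisms at every level~$n$ of the $\RFcat$-indexed tower. Taking $\holim_\RFcat$ preserves these level-wise $\pi_*$-isomorphisms: this is safe because the spectra involved are uniformly bounded below—each $\THH(\spec{A}[\oid{G}{-}])^{C_{p^n}}$ is $(-1)$-connected by the fundamental fibration sequence, as used in the proof of \autoref{holim-smash}\ref{i:holim-smash-afg}, and the smash products over~$\Or G$ inherit this connectivity. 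Consequently the maps labeled $\one$ and $\two$ in diagram~\eqref{eq:holim-smash} are $\pi_*$-isomorphisms.

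Next I would use the finite-type hypothesis on~$EG(\CF)$ to apply \autoref{holim-smash}\ref{i:holim-smash-fin-type}, which gives that the interchange map~$t$ is a $\pi_*$-isomorphism. At this point the top row of~\eqref{eq:holim-smash}, i.e.\ the $\TC$-assembly map, factors as the composite $\three\circ t$ with $t$ a $\pi_*$-isomorphism, so it suffices to understand~$\three$. Here I would exploit that the splitting~$\ii_\CF$ on the level of $\THH$ (satisfying $\pr_\CF\circ\ii_\CF=\id$) is natural and commutes with the restriction and Frobenius maps, as verified inside the proof of \autoref{split-THH-fix}; hence the right-hand vertical map $\pr_\CF$ in~\eqref{eq:holim-smash}, obtained after $\holim_\RFcat$, is still a retraction. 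Commutativity of the bottom square then yields
\[
\pr_\CF\circ\three\circ t \;=\; \two\circ\one\circ t
\,,
\]
and the right-hand side is a composition of three $\pi_*$-isomorphisms. Taking $g=\pr_\CF$ (after~$\holim$) as the retracting map shows that the $\TC$-assembly map is split injective in the sense of the paper, proving the first claim.

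For the second claim, assume moreover that $\Cyc\subseteq\CF$. Then \autoref{split-THH-fix} upgrades the right-hand vertical $\pr_\CF$ in its diagram from a retraction to an isomorphism at each level~$n$, hence so is the right-hand $\pr_\CF$ in~\eqref{eq:holim-smash} after $\holim_\RFcat$. Combined with $\one$ and $\two$ being $\pi_*$-isomorphisms, the relation $\pr_\CF\circ\three=\two\circ\one$ forces $\three$ itself to be a $\pi_*$-isomorphism; composing with the $\pi_*$-isomorphism~$t$, we conclude that the $\TC$-assembly map is a $\pi_*$-isomorphism, as claimed.

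I do not anticipate a genuine obstacle here: all the substantive analytic work is already packaged into \autoref{split-THH-fix} (which handles the $p$-radicability condition and the interaction of~$R$, $F$ with~$\pr_\CF$) and into \autoref{holim-smash}\ref{i:holim-smash-fin-type} (which handles the commutation of~$\holim_\RFcat$ past the smash product over $\Or G$ under the finite-type assumption). The only mild point to watch is the preservation of level-wise $\pi_*$-isomorphisms by~$\holim_\RFcat$, which is ensured by the uniform lower bound on the connectivity of the $C_{p^n}$-fixed points of~$\THH$.
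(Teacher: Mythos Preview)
Your proposal is correct and follows essentially the same route as the paper: combine \autoref{split-THH-fix}, diagram~\eqref{eq:holim-smash}, and \autoref{holim-smash}\ref{i:holim-smash-fin-type} exactly as you describe. One small remark: your extra justification via uniform connectivity for why $\holim_\RFcat$ preserves the level-wise $\pi_*$-isomorphisms is unnecessary---homotopy limits preserve objectwise weak equivalences by construction---so the paper simply asserts that $\one$ and~$\two$ are $\pi_*$-isomorphisms without further comment.
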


We are now ready to prove Theorems~\ref{iso-TC-finite-groups}, \ref{inj-TC-technical}\ref{i:pi*-inj-TC-technical}, and~\ref{inj-TC-rationally}.

\begin{proof}[Proof of \autoref{iso-TC-finite-groups}]
This follows at once from \autoref{split-TC-radicable-fin-type} and the following claim:
If $G$ is finite, then there is a universal space~$EG(\CF)$ of finite type for any family~$\CF$.

In order to prove this claim, consider the category~$\CC(G,\CF)$ of pointed orbits $(G/H,gH)$ with~$H\in\CF$ and pointed $G$-maps $(G/H,gH)\TO(G/H',g'H')$.
The group~$G$ acts on~$\CC(G,\CF)$ by left multiplication.

It is well known that $\real{N_\bullet\CC(G,\CF)}$ is a model for~$EG(\CF)$; see for example~\cite{Ramras}*{Section~2} and the other references given there in Section~1. 
This proves the claim because $\CC(G,\CF)$ is finite when~$G$ is finite.

We briefly recall the argument that $\real{N_\bullet\CC(G,\CF)}$ satisfies the characterizing property of~$EG(\CF)$.
For a subgroup $K\le G$ we have $\real{N_\bullet\CC(G,\CF)}^K = \real{N_\bullet(\CC(G,\CF)^K)}$.
It is straightforward to check that $\CC(G,\CF)^K$ is the full subcategory on objects $(G/H,gH)$ such that $g^{-1}Kg\le H$.
Hence the subcategory is empty if $K\notin\CF$ and it has the initial object $(G/K,eK)$ if $K\in\CF$.
\end{proof}

\begin{proof}[Proof of \autoref{inj-TC-technical}\ref{i:pi*-inj-TC-technical}]
Assume that $\CF$ is the directed union~$\bigcup_{j\in\CJ}\CF_j$ and that, for each~$j\in\CJ$, there is a model for~$EG(\CF_j)$ of finite type and $\CF_j$ is $p$-radicable.
Using functorial models for~$EG(\CF_j)$ (e.g., those described in the previous proof, which are usually not of finite type), we obtain a functor from $\CJ$ to~$G$-spaces whose homotopy colimit
\begin{equation}
\label{eq:EGF-as-hocolim}
\hocolim_{j\in\CJ}EG(\CF_j)
\end{equation}
is a model for~$EG(\CF)$.
This leads to the following commutative diagram.
\[
\begin{tikzcd}[column sep=-2em, row sep=large]
&
\ds\mathllap{\hocolim_{j\in\CJ}}\Bigl(EG(\CF_j)_+\sma_{\Or G}\TC(\spec{A}[\oid{G}{-}];p)\Bigr)
\arrow[rd, "\hocolim(\asbl_{\CF_j})" pos=.55]
\arrow[ld, "s"']
\\
\ds\Bigl(\hocolim_{j\in\CJ}EG(\CF_j)\Bigr)_+\sma_{\Or G}\TC(\spec{A}[\oid{G}{-}];p)
\arrow[rr, "\asbl_\CF"']
&&
\ds\TC(\spec{A}[G];p)
\end{tikzcd}
\]
Here~$s$ is the natural $\pi_*$-isomorphism that interchanges the order of smashing over~$\Or G$ and taking homotopy colimits.
Since~\eqref{eq:EGF-as-hocolim} is a model for~$EG(\CF)$, the horizontal map is a model for the assembly map with respect to~$\CF$.
For each~$j\in\CJ$, \autoref{split-TC-radicable-fin-type} applies to~$\CF_j$ by assumption, and so $\asbl_{\CF_j}$ is split injective, regardless of what model for~$EG(\CF_j)$ is used.
Since homotopy groups commute with directed (homotopy) colimits, and directed colimits are exact, we conclude that $\hocolim(\asbl_{\CF_j})$ is $\pi_*$-injective, and so the same is true for~$\asbl_\CF$, completing the proof.
\end{proof}

\begin{proof}[Proof of \autoref{inj-TC-rationally}]
This follows at once from \autoref{split-THH-fix}, diagram~\eqref{eq:holim-smash}, and \autoref{holim-smash}\ref{i:holim-smash-afg}.
\end{proof}

We conclude this section with an application and a remark.
Invoking the Transitivity Principle for assembly maps~\cite{LR-survey}*{Theorem~65, page~742}, \autoref{iso-TC-finite-groups} directly implies the following result.

\begin{corollary}
For any group~$G$ the relative assembly map from finite cyclic subgroups to all finite subgroups
\[
EG(\FCyc)_+\sma_{\Or G}\TC(\spec{A}[\oid{G}{-}];p)
\TO
EG(\Fin )_+\sma_{\Or G}\TC(\spec{A}[\oid{G}{-}];p)
\]
is a $\pi_*$-isomorphism.
\end{corollary}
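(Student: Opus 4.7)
The plan is to apply the Transitivity Principle for assembly maps, exactly as suggested in the sentence preceding the corollary. Recall that the Transitivity Principle states the following: given two families of subgroups~$\CF\subseteq\CG$ of~$G$, if for every~$H\in\CG$ the assembly map for the group~$H$ with respect to the family~$\CF|_H=\SET{K\in\CF}{K\leq H}$ is a $\pi_*$\nobreakdash-isomorphism, then the relative assembly map $EG(\CF)_+\sma_{\Or G}\T(\oid{G}{-})\to EG(\CG)_+\sma_{\Or G}\T(\oid{G}{-})$ is a $\pi_*$-isomorphism.

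I would specialize this to $\CF=\FCyc$, $\CG=\Fin$, and $\T=\TC(\spec{A}[-];p)$. For each finite subgroup $H\leq G$ one needs to verify that the assembly map
\[
EH(\FCyc|_H)_+\sma_{\Or H}\TC(\spec{A}[\oid{H}{-}];p)
\TO
\TC(\spec{A}[H];p)
\]
is a $\pi_*$-isomorphism. The key observation is that, since $H$ is finite, every cyclic subgroup of~$H$ is automatically finite cyclic, so $\FCyc|_H$ coincides with the family~$\Cyc$ of cyclic subgroups of~$H$. At that point, \autoref{iso-TC-finite-groups} applied to the finite group~$H$ yields exactly the required $\pi_*$-isomorphism.

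There is no substantive obstacle: the only things to check are that $\TC(\spec{A}[-];p)$ fits into the framework for which the Transitivity Principle is formulated (which it does, being defined via a functor from~$\Groupoids$ to~$\IN\Sp$ that preserves equivalences, as explained in~\autoref{ASBL}), and the elementary identification $\FCyc|_H=\Cyc$ for finite~$H$. The real content has already been established in \autoref{iso-TC-finite-groups}, and the corollary is a formal consequence.
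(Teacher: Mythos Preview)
Your proposal is correct and follows exactly the same approach as the paper: invoke the Transitivity Principle with $\CF=\FCyc\subseteq\CG=\Fin$, observe that for each finite $H$ the restricted family $\FCyc|_H$ is just $\Cyc$, and then apply \autoref{iso-TC-finite-groups}. The paper states this in a single sentence, and your write-up spells out precisely the same steps.
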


The same strategy does not apply to prove the analogous result for the map
\[
EG(\Cyc )_+\sma_{\Or G}\TC(\spec{A}[\oid{G}{-}];p)
\TO
EG(\VCyc)_+\sma_{\Or G}\TC(\spec{A}[\oid{G}{-}];p)
\,.
\]
In fact, \autoref{CpxCoo} shows that there are virtually cyclic groups without a universal space~$EG(\Cyc)$ of finite type.
Therefore we cannot apply \autoref{split-TC-radicable-fin-type} to conclude that the assembly map with respect to the family of cyclic subgroups is a $\pi_*$-isomorphism for all virtually cyclic groups.

\begin{example}
\label{CpxCoo}
Consider the group $G=\IZ/p\IZ\oplus\IZ$.
We claim that there are infinitely many distinct maximal infinite cyclic subgroups~$C\le G$.
This claim implies that $EG(\Cyc)$ cannot be of finite type, because for any maximal infinite cyclic subgroup~$C\le G$, since $EG(\Cyc)^C\ne\emptyset$, there must be a zero-cell~$G/H$ such that $(G/H)^C\ne\emptyset$, i.e., $H=C$.
Therefore $EG(\Cyc)$ must have infinitely many $G$-zero-cells.

To prove the claim, write $C(i,m)$ for the subgroup of~$G$ generated by~$(i,m)$.
Every infinite cyclic subgroup is of the form~$C(i,m)$ for precisely one pair~$(i,m)$ with $i\in\IZ/p\IZ$ and~$m\ge1$.
Clearly, $C(i,m)\leq C(j,n)$ if and only if there exists a $k\ge1$ such that $(i,m)=(kj,kn)$.
If $(i,p^r)=(kj,kn)$, then either $n=p^r$ and hence $i=j$, or $n=p^s$ with $s<r$ and $i=kj=p^{r-s}j=0\in\IZ/p\IZ$.
Therefore the
\begin{equation}
\label{eq:CpxCoo}
C(i,p^r)
\quad
\text{with $i\in\IZ/p\IZ-\{0\}$ and~$r\ge0$}
\end{equation}
are infinitely many distinct maximal infinite cyclic subgroups.
Notice also that, if $m=kp^r$ with $(k,p)=1$, then $C(i,m)\leq C(ik^{-1},p^r)$. Therefore the maximal infinite cyclic subgroups are precisely~$C(0,1)$ and the subgroups listed in~\eqref{eq:CpxCoo}.
\end{example}

An affirmative answer to the following question would imply that only for one infinite group can \autoref{split-TC-radicable-fin-type} be used to prove nontrivial isomorphism results.

\begin{question}
\label{EG(Cyc)}
Is it true that there exists a universal space $EG(\Cyc)$ of finite type if and only if~$G$ is either finite or infinite cyclic or infinite dihedral?
\end{question}

This question is similar to a conjecture about~$EG(\VCyc)$ posed by Juan-Pineda and Leary~\cite{JPL}*{Conjecture~1, page~142}.
In our original formulation of this question the infinite dihedral case was erroneously missing.
The correct formulation of \autoref{EG(Cyc)} given above is due to Puttkamer and Wu~\cite{vPW}*{Question~A}, who construct an $EG(\Cyc)$ of finite type for the infinite dihedral group in~[loc.~cit., Lemma~3.9].
Moreover, in~[loc.~cit., Theorem~II] they answer \autoref{EG(Cyc)} affirmatively for all of the following groups:
elementary amenable groups;
one-relator groups;
3-manifold groups;
acylindrically hyperbolic groups;
CAT(0) cube groups;
linear groups.


\section{From finite to virtually cyclic subgroups}
\label{EG(VCyc)}

The purpose of this section is to prove the following result, which shows that \autoref{inj-TC-technical}\ref{i:pi*-inj-TC-technical} implies \autoref{inj-TC-examples}\ref{i:VCyc}.

\begin{proposition}
\label{hyperbolic-or-vfga-satisfy-technical}
The family~$\CF=\VCyc$ satisfies the assumption of \autoref{inj-TC-technical}\ref{i:pi*-inj-TC-technical} in each of the following two cases:
\begin{enumerate}
\item
\label{i:hyperbolic}
$G$ is hyperbolic;
\item
\label{i:vfga}
$G$ is virtually finitely generated abelian.
\end{enumerate}
\end{proposition}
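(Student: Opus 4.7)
The plan is to exhibit, in both cases, a directed system of $p$-radicable subfamilies of $\VCyc$ admitting universal spaces of finite type, whose directed union is $\VCyc$. The starting point is a common structural fact: under either hypothesis on $G$, every infinite virtually cyclic subgroup $V \leq G$ is contained in a unique maximal infinite virtually cyclic subgroup $M(V) \leq G$. For hyperbolic groups this is classical (see for instance Bridson-Haefliger), with $M(V)$ equal to the commensurator of~$V$; for virtually finitely generated abelian groups it follows by passing to a finite-index finitely generated abelian subgroup and analyzing the poset of infinite cyclic subgroups of~$\IZ^n$. In both cases $M(V)$ depends only on the commensurability class of~$V$, and in particular $M(\langle g \rangle) = M(\langle g^p \rangle)$ for every $g \in G$ of infinite order.

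Let $\CM$ be a complete set of representatives for the $G$-conjugacy classes of maximal infinite virtually cyclic subgroups of~$G$, and let $\CJ$ be the directed poset of finite subsets of~$\CM$. For each $J \in \CJ$ I would define $\CF_J$ to be the family consisting of all finite subgroups of~$G$ together with all subgroups of~$G$ subconjugate to some~$V \in J$. The equality $\VCyc = \bigcup_{J \in \CJ} \CF_J$ and the directedness of the union are immediate from the existence and uniqueness of $M(V)$. To verify that each $\CF_J$ is $p$-radicable, suppose $\langle g^p \rangle \in \CF_J$. If $g$ has finite order, then $\langle g \rangle \in \Fin \subseteq \CF_J$; if $g$ has infinite order, then $M(\langle g \rangle) = M(\langle g^p \rangle)$ shows that $\langle g \rangle$ is subconjugate to the same $V \in J$ as $\langle g^p \rangle$, so $\langle g \rangle \in \CF_J$.

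The remaining step, which I expect to be the main technical obstacle, is to construct for each $J \in \CJ$ a universal space $EG(\CF_J)$ of finite type. The plan is to apply the pushout construction of Lück and Weiermann: starting from a model of $EG(\Fin)$, one obtains $EG(\CF_J)$ by attaching, for each $V \in J$, a single $G$-orbit of cells of the form $G \times_N EN(\CF_V)$, where $N$ is the commensurator of~$V$ in~$G$ and $\CF_V$ is the family of subgroups of~$N$ that are finite or commensurable with~$V$. For hyperbolic~$G$, a finite model for $EG(\Fin)$ is provided by the truncated Rips complex (Meintrup-Schick, Mineyev), and $N$ is itself virtually cyclic, so $EN(\CF_V)$ is a finite complex. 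For virtually finitely generated abelian~$G$, a finite model for $EG(\Fin)$ arises from the proper cocompact action of a torsion-free finite-index subgroup on some~$\IR^n$, while $N$ is again virtually finitely generated abelian with $N/V$ of smaller rank, so a direct construction (or a short induction on rank) yields a finite type model for $EN(\CF_V)$. Since $J$ is finite, attaching finitely many $G$-orbits of cells to a finite type base produces a finite type model for $EG(\CF_J)$, completing the argument.
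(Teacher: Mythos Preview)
Your plan for the hyperbolic case is correct and matches the paper's approach via Lemma~\ref{NM-satisfies-technical}: condition $\MNM{N}{\Fin}{\VCyc}$ gives you exactly the unique maximal infinite virtually cyclic overgroup with trivial Weyl group, and the Lück--Weiermann pushout together with the Meintrup--Schick model for $EG(\Fin)$ produces $EG(\CF_J)$ of finite type.

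The gap is in the virtually finitely generated abelian case. Your ``common structural fact'' is false there: an infinite virtually cyclic subgroup need not lie in a \emph{unique} maximal one, and this breaks your $p$-radicability argument. Take $G=\IZ^2\rtimes C_2$ with $\sigma(x,y)=(y,x)$, let $g=(1,0)\sigma$, and set $V=\langle g^2\rangle=\langle(1,1)\rangle$. Then $H_1=\langle(1,1),\sigma\rangle\cong\IZ\times C_2$ is a maximal infinite virtually cyclic subgroup containing~$V=\langle g^2\rangle$, but a direct check shows that $\langle g\rangle$ is not subconjugate to~$H_1$ (for $h=(c,d)\sigma^\epsilon$ one gets the equation $2c-2d=\pm1$). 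So with $J=\{H_1\}$ your family~$\CF_J$ contains~$\langle g^2\rangle$ but not~$\langle g\rangle$, hence is not $2$-radicable. The underlying issue is that there are several non-conjugate maximal virtually cyclic subgroups sitting over the same primitive cyclic subgroup of~$A$, and a power of~$g$ can fall into one of them while $g$ falls into another.

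The paper circumvents this by \emph{not} indexing over maximal virtually cyclic subgroups of~$G$. Instead it fixes a finitely generated free abelian $A\trianglelefteq G$ of finite index, indexes over (conjugacy classes of) maximal infinite cyclic subgroups $C\le A$, and declares two infinite virtually cyclic subgroups equivalent when $(A\cap V_1)^{\max}=(A\cap V_2)^{\max}$ in~$A$. The family~$\CF_\CS$ is then: finite subgroups, together with infinite virtually cyclic $V$ whose class meets~$\CS$. This \emph{is} $p$-radicable, because $A\cap\langle g\rangle$ and $A\cap\langle g^p\rangle$ are commensurable in~$A$ and hence have the same maximal cyclic overgroup there. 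The corresponding pushout uses $N_GC$ and $p_C^*EW_GC(\Fin)$, where $W_GC=N_GC/C$ is again virtually finitely generated abelian (of one lower rank), so all pieces have finite type models. Your sketch can be repaired by switching to this indexing.
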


In fact, we are going to prove~\ref{i:vfga} directly, and deduce~\ref{i:hyperbolic} from a general criterion formulated in \autoref{NM-satisfies-technical}.
In order to formulate this criterion we need to introduce some notation.
Given any subgroup~$H\le G$, we denote by~$N_GH$ the normalizer of~$H$ in~$G$, and we define its Weyl group as the quotient $W_GH=N_GH/H$.
(Warning: this definition agrees with the one used in~\citelist{\cite{L-type} \cite{L-survey} \cite{LWeiermann} \cite{LRosenthal}} but not with~\cite{kc}, where the Weyl group is taken to be the quotient~$N_GH/(Z_GH\cdot H)$.)
We also use the following definitions from~\cite{LWeiermann}*{Notation 2.7, page~504}.

\begin{definition}
\label{NM}
Let $\CF\subset\CV$ be families of subgroups of a group~$G$.
We consider the following two conditions:
\begin{itemize}[leftmargin=\widthof{$\MNM{N}{\CF}{\CH}\quad$}]
\item[$\MNM{}{\CF}{\CV}:$]
each~$V\in\CV-\CF$ is contained in a unique maximal $V^{\max}\in\CV-\CF$;
\item[$\MNM{N}{\CF}{\CV}:$]
each~$V\in\CV-\CF$ is contained in a unique maximal $V^{\max}\in\CV-\CF$
and\newline we have $N_G(V^{\max})=V^{\max}$.
\end{itemize}
\end{definition}

\begin{lemma}
\label{hyperbolic}
Let $G$ be a hyperbolic group. Then:
\begin{enumerate}
\item
\label{i:hyperbolic-implies-NM}
$G$ satisfies condition $\MNM{N}{\Fin}{\VCyc}$;
\item
\label{i:hyperbolic-maxinfcyc}
if $G$ is not virtually cyclic, then $G$ contains infinitely many conjugacy classes of maximal infinite cyclic subgroups;
\item
\label{i:hyperbolic-maxvircyc}
if $G$ is not virtually cyclic, then $G$ contains infinitely many conjugacy classes of maximal infinite virtually cyclic subgroups.
\end{enumerate}
\end{lemma}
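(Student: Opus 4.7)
The plan is to prove (i) directly via the classical theory of elementary subgroups in Gromov-hyperbolic groups, to establish (iii) using ping-pong on the Gromov boundary, and to deduce (ii) from (iii) via a pigeonhole argument using (i).

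For~(i), to each $g\in G$ of infinite order I would associate the elementary subgroup
\[
E(g) = \bigl\{ h\in G \;:\; h\cdot\{g^{+\infty}, g^{-\infty}\} = \{g^{+\infty}, g^{-\infty}\} \bigr\}
\]
defined as the setwise stabilizer in~$G$ of the pair of endpoints of the axis of~$g$ on the Gromov boundary~$\partial G$. It is standard that $E(g)$ is virtually cyclic and is the unique maximal virtually cyclic subgroup of~$G$ containing~$g$. For any infinite virtually cyclic $V\leq G$, all infinite-order elements of~$V$ share the same pair of boundary endpoints, so $V^{\max}:=E(g)$ for any infinite-order $g\in V$ is well-defined independently of~$g$ and is the unique maximal virtually cyclic subgroup of~$G$ containing~$V$. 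Self-normalization $N_G V^{\max}=V^{\max}$ is then immediate: any element of the normalizer permutes~$V^{\max}$, hence permutes its infinite-order elements, hence preserves their common endpoint pair, and therefore lies in~$V^{\max}$.

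For~(iii), the hypothesis that $G$ is hyperbolic but not virtually cyclic implies that $G$ is non-elementary, and a standard ping-pong argument then yields two hyperbolic elements $a,b\in G$ with disjoint pairs of fixed points on~$\partial G$, generating a quasi-isometrically embedded free subgroup of rank two. I would exhibit infinitely many $G$-non-conjugate maximal virtually cyclic subgroups among the $E(ab^n)$ for $n\geq 1$, distinguishing their $G$-conjugacy classes by comparing the $G$-orbits of the unordered endpoint pairs $\{(ab^n)^{+\infty},(ab^n)^{-\infty}\}\subset\partial G$, which perturb off the fixed points of~$b$ as a function of~$n$ by the North-South dynamics of~$a$. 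I expect the distinctness of these $G$-orbits to be the main technical obstacle; it can be tackled either directly using the convergence-group action of $G$ on~$\partial G$, or by invoking the well-known fact (as in Juan-Pineda-Leary) that a non-elementary hyperbolic group has infinitely many conjugacy classes of maximal virtually cyclic subgroups.

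For~(ii), I would argue by pigeonhole, assuming only finitely many $G$-conjugacy classes $[C_1],\ldots,[C_k]$ of maximal infinite cyclic subgroups. Every infinite virtually cyclic group contains an infinite cyclic subgroup of finite index, and a short argument on chains of cyclic subgroups within a group containing a $\IZ$-subgroup of finite index shows that any such is contained in a maximal infinite cyclic subgroup of~$G$. Thus every maximal virtually cyclic $V\leq G$ contains some maximal infinite cyclic $C\leq V$, which is $G$-conjugate to some~$C_j$. Part~(i) then forces $V=E(c)$ for any generator~$c$ of~$C$, so $V$ is $G$-conjugate to~$E(c_j)$. This confines the conjugacy classes of maximal virtually cyclic subgroups to the finite set $\{[E(c_1)],\ldots,[E(c_k)]\}$, contradicting~(iii).
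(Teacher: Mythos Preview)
Your proposal is essentially correct, but the logical flow between (ii) and (iii) is reversed relative to the paper. The paper cites~\cite{LWeiermann} for~(i) and~\cite{Gromov} for~(ii), and then \emph{deduces}~(iii) from (i) and~(ii): given infinitely many pairwise non-conjugate maximal infinite cyclic~$C_j$, if only finitely many $C_j^{\max}$ were conjugate then some fixed~$C_0^{\max}$ would contain infinitely many distinct maximal infinite cyclic subgroups, contradicting the fact that a virtually cyclic group has only finitely many such. You instead prove~(iii) directly (or by citation) and \emph{deduce}~(ii) from (i) and~(iii) by the dual pigeonhole: each maximal virtually cyclic subgroup is $E(c)$ for some maximal infinite cyclic~$\langle c\rangle$, so finitely many classes of the latter would force finitely many classes of the former. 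Both directions work, and both ultimately rest on the finiteness of maximal cyclic subgroups inside a fixed virtually cyclic group.

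Your direct treatment of~(i) via the elementary closure~$E(g)$ is exactly the content of the references the paper cites, so nothing is lost there. The one soft spot in your proposal is the direct argument for~(iii): showing that infinitely many of the $E(ab^n)$ are pairwise non-conjugate in~$G$ via boundary dynamics is delicate, and you yourself flag it as the main obstacle. The paper sidesteps this entirely by taking~(ii) as input from Gromov's paper, which is cleaner if you are willing to cite; if you want a self-contained argument, a growth count of conjugacy classes of hyperbolic elements is more robust than tracking the specific sequence~$ab^n$.
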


\begin{proof}
\ref{i:hyperbolic-implies-NM} is proved in~\cite{LWeiermann}*{Theorem~3.1 and Example~3.6, pages~506--510}, and \ref{i:hyperbolic-maxinfcyc} is proved in~\cite{Gromov}*{Corollary~5.1.B, pages~136--137, or Corollary~8.2.G, page~213}; notice that the elementary hyperbolic groups are precisely those that are virtually cyclic.
Together these imply~\ref{i:hyperbolic-maxvircyc} as follows.
Suppose by contradiction that $\{C_j\}_{j\in\IN}$ is an infinite collection of maximal infinite cyclic subgroups which are pairwise not conjugate but such that all $C_j^{\max}$ are conjugate, i.e., for each~$j\in\IN$ there is a~$g_j\in G$ such that $g_jC_j^{\max}g_j^{-1}=C_0^{\max}$.
Then $\{g_jC_jg_j^{-1}\}_{j\in\IN}$ is an infinite collection of pairwise distinct maximal infinite cyclic subgroups of the virtually cyclic group~$C_0^{\max}$.
But any virtually cyclic group contains only finitely many maximal cyclic subgroups.
\end{proof}

\begin{example}
Suppose that every subgroup of~$G$ which is not virtually cyclic contains a non-abelian free subgroup.
Then $G$ satisfies $\MNM{N}{\Fin}{\VCyc}$ by~\cite{LWeiermann}*{Theorem~3.1 and Lemma~3.4, pages~506--509}.
\end{example}

\begin{lemma}
\label{NM-satisfies-technical}
Assume that $G$ satisfies condition $\MNM{}{\Fin}{\VCyc}$.
Assume that there are models of finite type for~$EG(\Fin)$ and for~$EW_G V$ for each maximal infinite virtually cyclic subgroup~$V$ of~$G$.
Then the family~$\VCyc$ satisfies the assumption of \autoref{inj-TC-technical}\ref{i:pi*-inj-TC-technical}.
\end{lemma}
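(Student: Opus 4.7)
The plan is to write $\VCyc$ as a directed union of $p$-radicable subfamilies with finite type universal spaces. Using the uniqueness in $\MNM{}{\Fin}{\VCyc}$, every $H\in\VCyc-\Fin$ has a unique maximal $H^{\max}\in\VCyc-\Fin$. I would fix a complete set $\CM$ of representatives of the $G$-conjugacy classes of maximal infinite virtually cyclic subgroups, and for each finite subset $J\subseteq\CM$ define
\[
\CF_J \;=\; \Fin \;\cup\; \SET{H\le G}{H\le gVg^{-1}\text{ for some }g\in G,\ V\in J}.
\]
Each $\CF_J$ is a family, closed under conjugation trivially and under passage to subgroups by the uniqueness of maximals, and the collection $\{\CF_J\}_J$ is directed under inclusion. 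Since every infinite virtually cyclic $H\le G$ satisfies $H\le H^{\max}$ with $H^{\max}$ conjugate to some $V\in\CM$, we have $\VCyc=\bigcup_{J}\CF_J$ as a directed union over finite subsets $J\subseteq\CM$.

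Next I would verify that each $\CF_J$ is $p$-radicable. The nontrivial implication is $\langle g^p\rangle\in\CF_J\Rightarrow\langle g\rangle\in\CF_J$. If $\langle g^p\rangle$ is finite, then $g$ has finite order and $\langle g\rangle\in\Fin\subseteq\CF_J$. Otherwise, $\langle g^p\rangle\le\langle g\rangle$ are both infinite virtually cyclic, and since $\langle g\rangle^{\max}$ and $\langle g^p\rangle^{\max}$ are both maximal elements of $\VCyc-\Fin$ containing $\langle g^p\rangle$, the uniqueness clause of $\MNM{}{\Fin}{\VCyc}$ forces $\langle g\rangle^{\max}=\langle g^p\rangle^{\max}$; by hypothesis this common maximal lies in a conjugate $gVg^{-1}$ with $V\in J$, and therefore $\langle g\rangle\in\CF_J$.

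The main task is to construct a finite type model for $EG(\CF_J)$. For this I would apply the Lück-Weiermann $G$-pushout construction~\cite{LWeiermann}*{Theorem~2.3} to the pair $\Fin\subset\CF_J$, yielding
\[
\begin{tikzcd}[column sep=small]
\coprod_{V\in J}G\times_{N_GV}EN_GV(\Fin\cap N_GV)
\arrow[r]\arrow[d]
& EG(\Fin)\arrow[d] \\
\coprod_{V\in J}G\times_{N_GV}EN_GV(\CF_J[V])
\arrow[r]
& EG(\CF_J)
\end{tikzcd}
\]
with $\CF_J[V]=(\Fin\cap N_GV)\cup\SET{H\le N_GV}{H\le V}$. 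Since $J$ is finite and $EG(\Fin)$ has finite type by assumption, it suffices to produce finite type models for the two pieces indexed by each $V\in J$. The space $EN_GV(\CF_J[V])$ can be realized by the equivariant join $EN_GV(\Fin\cap N_GV)\ast EW_GV$, where $EW_GV$ is pulled back along $N_GV\twoheadrightarrow W_GV$, so the problem reduces to constructing finite type models for the spaces $EN_GV(\Fin\cap N_GV)$. This last reduction is the main obstacle, because restricting $EG(\Fin)$ along $N_GV\hookrightarrow G$ does not in general preserve finite type; I would build $EN_GV(\Fin)$ directly from the extension $1\to V\to N_GV\to W_GV\to 1$, the standard one-dimensional finite type model for $EV(\Fin)$ (available because $V$ is virtually cyclic), and the hypothesized finite type $EW_GV$, via the extension-of-families constructions developed in~\cite{LWeiermann}.
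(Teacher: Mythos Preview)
Your approach is the same as the paper's: decompose $\VCyc$ over finite subsets $\CS$ of a set~$\CM$ of conjugacy representatives of maximal infinite virtually cyclic subgroups, and build $EG(\CF_\CS)$ from a Lück--Weiermann $G$-pushout. Two points where the paper is sharper:

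First, the bottom-left corner of the pushout can be taken to be simply $G\times_{N_GV}p_V^*EW_GV$ (via~\cite{LWeiermann}*{Corollary~2.10}), with no need for the join $EN_GV(\Fin)\ast p_V^*EW_GV$; this already has the correct fixed-point behavior and is manifestly of finite type given the hypothesis on~$EW_GV$.

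Second, and more importantly, for the finite type of~$EN_GV(\Fin)$ the paper makes explicit the key observation your sketch leaves implicit: since $V$ is \emph{maximal} infinite virtually cyclic, the Weyl group $W_GV=N_GV/V$ is \emph{torsionfree} (any torsion element would lift to a virtually cyclic subgroup strictly containing~$V$). With $W_GV$ torsionfree and $BW_GV=EW_GV$ of finite type, and $V$ virtually cyclic so that $EV(\Fin)$ has a finite model, \cite{L-type}*{Theorem~3.2} immediately yields a finite type~$EN_GV(\Fin)$. Your appeal to ``extension-of-families constructions developed in~\cite{LWeiermann}'' is pointed at the wrong reference; the relevant result is in~\cite{L-type}, and it needs the torsionfreeness as input.
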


\begin{proof}
Choose a complete set of representatives~$\CM$ of the conjugacy classes of maximal infinite virtually cyclic subgroups of~$G$.
Condition $\MNM{}{\Fin}{\VCyc}$ says that for any infinite virtually cyclic subgroup $H\le G$ there is exactly one $V\in\CM$ such that $gHg^{-1}\le V$ for some~$g\in G$, or, equivalently, such that $(G\times_{N_GV}p_V^*EW_GV)^H\ne\emptyset$.
Here $\MOR{p_V}{N_GV}{W_GV}$ denotes the projection and $\MOR{p_V^*}{\Top^{W_GV}}{\Top^{N_GV}}$ the corresponding restriction functor.
By~\cite{LWeiermann}*{Corollary~2.10, page~505}, there is a $G$-pushout
\begin{equation}
\label{eq:EG(VCyc)-hyperbolic}
\begin{tikzcd}
\ds\smash{\smallcoprod_{V\in\CM}}G\timesd_{N_GV}EN_GV(\Fin)
\arrow[d, "\pr"']
\arrow[r]
&
EG(\Fin)
\arrow[d]
\\
\ds\smallcoprod_{V\in\CM}G\timesd_{N_GV}p_V^*EW_GV
\arrow[r]
&
EG(\VCyc)
\mathrlap{\,.}
\end{tikzcd}
\end{equation}

Now let $\CP_f(\CM)$ be the directed poset of finite subsets of $\CM$ ordered by inclusion.
Given $\CS\in\CP_f(\CM)$, let $\CF_\CS$ be the family of subgroups of~$G$ that are either finite or subconjugate to some~$V\in\CS$.
Clearly
\[
\VCyc=\bigcup_{\CS\in\CP_f(\CM)}\CF_\CS
\]
and each family~$\CF_\CS$ is $p$-radicable.
So it only remains to construct universal spaces $EG(\CF_\CS)$ of finite type.

As in~\eqref{eq:EG(VCyc)-hyperbolic} we obtain a $G$-pushout
\[
\begin{tikzcd}
\ds\smash{\smallcoprod_{V\in\CS}}G\timesd_{N_GV}EN_GV(\Fin)
\arrow[d, "\pr"']
\arrow[r]
&
EG(\Fin)
\arrow[d]
\\
\ds\smallcoprod_{V\in\CS}G\timesd_{N_GV}p_V^*EW_GV
\arrow[r]
&
EG(\CF_\CS)
\mathrlap{\,.}
\end{tikzcd}
\]
We are assuming that there are models of finite type for $EG(\Fin)$ and each~$EW_GV$.
Since $V$ is maximal virtually cyclic, $W_GV$ is torsionfree, and hence there is a model of finite type for~$EN_GV(\Fin)$ by~\cite{L-type}*{Theorem~3.2, page~193}.
Since $\CS$ is finite, we conclude that $EG(\CF_\CS)$ is of finite type.
\end{proof}

We can now finish the proof of \autoref{hyperbolic-or-vfga-satisfy-technical}.

\begin{proof}[Proof of \autoref{hyperbolic-or-vfga-satisfy-technical}]
\ref{i:hyperbolic} follows from \autoref{NM-satisfies-technical}, \autoref{hyperbolic}\ref{i:hyperbolic-implies-NM}, and the fact that hyperbolic groups have universal spaces~$EG(\Fin)$ of finite type by~\cite{Meintrup-Schick}.

\ref{i:vfga}
We follow the arguments in~\cite{LRosenthal}*{Subsections 1.4 and~3.3, pages~1571 and~1584--1585}.
By assumption there is a group extension
\begin{equation}
\label{eq:A->G->Q}
1 \TO A \TO G \TO[q] Q \TO 1
\,,
\end{equation}
where $A$ is finitely generated free abelian and $Q$ is finite.
The conjugation action of $G$ on the normal abelian subgroup $A$ induces
an action $\MOR{\rho}{Q}{\aut(A)}$.

Let $\CM$ be the set of maximal infinite cyclic subgroups of $A$.
Since any automorphism of $A$ sends a maximal infinite cyclic subgroup to a maximal infinite cyclic subgroup,
$\rho$ induces a $Q$-action on~$\CM$.
Fix a subset
\(
\CN\subseteq\CM
\)
whose intersection with each $Q$-orbit in~$\CM$ consists of precisely one element.

Given~$C\in\CN$, denote by
\(
Q_C\le Q
\)
the isotropy group of~$C$ under the $Q$-action.
The given extension~\eqref{eq:A->G->Q} induces an extension
\[
1 \TO A/C \TO W_GC \TO Q_C \TO 1
\,,
\]
Since $C\le A$ is a maximal infinite cyclic subgroup, $A/C$ is finitely generated free abelian.

Notice that any infinite cyclic subgroup $C\le A$ is contained in a unique maximal infinite cyclic subgroup $C^{\max}\le A$.
In particular, for two maximal infinite cyclic subgroups $C,D\le A$, either $C \cap D = \{0\}$ or $C = D$.
For every $C\in\CN$ we have
\[
N_GC = \SET{g\in G}{\#(gCg^{-1}\cap C)=\infty} = q^{-1}(Q_C).
\]

Now we define an equivalence relation on the set of infinite virtually cyclic subgroups of $G$.
We say that $V_1$ and~$V_2$ are equivalent if and only if $(A\cap V_1)^{\max} = (A\cap V_2)^{\max}$.
Then for every infinite virtually cyclic subgroup $V\le G$ there is exactly one $C\in\CN$ such that $V$ is equivalent to~$gCg^{-1}$ for some $g\in G$.
We obtain from~\cite{LWeiermann}*{Theorem~2.3, page~502} a $G$-pushout
\begin{equation}
\label{eq:EG(VCyc)-vfga}
\begin{tikzcd}
\ds\smash{\smallcoprod_{C\in\CN}}G\timesd_{N_GC}EN_GC(\Fin)
\arrow[d, "\pr"']
\arrow[r]
&
EG(\Fin)
\arrow[d]
\\
\ds\smallcoprod_{C\in\CN}G\timesd_{N_GC}p_C^*EW_GC(\Fin)
\arrow[r]
&
EG(\VCyc)
\mathrlap{\,,}
\end{tikzcd}
\end{equation}
where $\MOR{p_C}{N_GC}{W_GC}$ is the projection.

We now proceed as in the proof of \autoref{NM-satisfies-technical}.
Let $\CP_f(\CN)$ be the directed poset of finite subsets of $\CN$ ordered by inclusion.
Given $\CS\in\CP_f(\CN)$, let $\CF_\CS$ be the family of subgroups that are either finite, or infinite virtually cyclic and equivalent to a conjugate of some~$C\in\CS$.
Clearly
\[
\VCyc=\bigcup_{\CS\in\CP_f(\CN)}\CF_\CS
\]
and each family~$\CF_\CS$ is $p$-radicable.

For each $\CS\in\CP_f(\CN)$ we obtain from~\eqref{eq:EG(VCyc)-vfga} a $G$-pushout
\[
\begin{tikzcd}
\ds\smash{\smallcoprod_{C\in\CS}}G\timesd_{N_GC}EN_GC(\Fin)
\arrow[d, "\pr"']
\arrow[r]
&
EG(\Fin)
\arrow[d]
\\
\ds\smallcoprod_{C\in\CS}G\timesd_{N_GC}p_C^*EW_GC(\Fin)
\arrow[r]
&
EG(\CF_\CS)
\mathrlap{\,.}
\end{tikzcd}
\]
Any virtually finitely generated abelian group~$\Gamma$ admits a surjection with finite kernel onto a crystallographic group (see e.g.~\cite{Quinn-abelian}*{Lemma~4.2.1, page~182}), and therefore it has a universal space~$E\Gamma(\Fin)$ of finite type.
So we can chose models of finite type for $EN_GC(\Fin)$, $EW_GC(\Fin)$, and $EG(\Fin)$, and obtain a finite type $EG(\CF_\CS)$ for each $\CS\in\CP_f(\CN)$.
\end{proof}


\section{Failure of surjectivity}

This section is devoted to the proof of \autoref{not-surj-TC} and its variant for~$\tr$.
More precisely, we establish the following result.

\begin{theorem}
\label{not-surj-TR-TC}
The assembly maps for the family of virtually cyclic subgroups
\[
\MOR{\asbl^\TR}{EG(\VCyc)_+\sma_{\Or G}\TR(\spec{A}[\oid{G}{-}];p)}{\TR(\spec{A}[G];p)}
\]
and
\[
\MOR{\asbl^\TC}{EG(\VCyc)_+\sma_{\Or G}\TC(\spec{A}[\oid{G}{-}];p)}{\TC(\spec{A}[G];p)}
\]
are not always $\pi_*$-surjective.
For example, if $\spec{A}=\IZ_{(p)}$ and $G$ is either finitely generated free abelian or torsion-free hyperbolic, but not cyclic, then neither $\pi_0(\asbl^\TR)$ nor $\pi_{-1}(\asbl^\TC)$ is surjective.
\end{theorem}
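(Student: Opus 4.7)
The plan is to prove both failures of surjectivity simultaneously by exhibiting a single element~$x\in\pi_0\TR(\IZ_{(p)}[G];p)$ that lies neither in the image of~$\pi_0\asbl^\TR$ nor modulo the image of~$F-1$.

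First, I would invoke the natural stable homotopy fibration sequence
\[
\TC(\spec{A}[G];p) \TO \TR(\spec{A}[G];p) \xrightarrow{F-1} \TR(\spec{A}[G];p)
\,,
\]
which is functorial and hence compatible with the assembly maps, and similarly for the source of each assembly map. Since $\spec{A}=\IZ_{(p)}$ is connective, both the source and the target of~$\asbl^\TR$ are $(-1)$-connected, so $\pi_{-1}\TC=\coker(F-1\colon\pi_0\TR\to\pi_0\TR)$ for both. A snake-lemma chase on the induced ladder of long exact sequences shows that $\pi_{-1}\asbl^\TC$ fails to be surjective as soon as there exists $x\in\pi_0\TR(\IZ_{(p)}[G];p)$ with
\[
x \not\in \im(\pi_0\asbl^\TR) + (F-1)\,\pi_0\TR(\IZ_{(p)}[G];p)
\,,
\]
and such an~$x$ also witnesses the failure of surjectivity of~$\pi_0\asbl^\TR$. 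It therefore suffices to produce a single such~$x$.

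Second, I would identify $\pi_0\TR(\IZ_{(p)}[G];p)$ with the $p$\=/typical Witt vectors $W(\IZ_{(p)}[G])$ via the Bökstedt--Hesselholt--Madsen theorem, and decompose it according to the conjugacy class splitting of the cyclic bar construction recalled in \autoref{PRO-ISO}. Under this identification the Frobenius~$F$ permutes summands by $[c]\mapsto[c^p]$. The source of~$\asbl^\TR$ is then analyzed by the $G$\=/pushouts~\eqref{eq:EG(VCyc)-vfga} and~\eqref{eq:EG(VCyc)-hyperbolic}, respectively: its image in $W(\IZ_{(p)}[G])$ is a sum of contributions, each factoring through $W(\IZ_{(p)}[V])\to W(\IZ_{(p)}[G])$ for a single maximal infinite virtually cyclic subgroup~$V\le G$ at a time.

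Third, I would construct~$x$ exploiting the fact that for~$G$ not cyclic there is genuine interaction between distinct maximal infinite cyclic subgroups that is invisible to any single such subgroup. For $G$ free abelian of rank $r\ge 2$ with basis~$e_1,\dots,e_r$, the three elements $e_1$, $e_2$, $e_1+e_2$ generate pairwise incomparable maximal infinite cyclic subgroups, and no virtually cyclic subgroup of~$G$ contains any two of them. One then constructs a Witt-vector class that combines the corresponding Teichmüller lifts (or their Verschiebungs) in a manner not realizable from any single virtually cyclic~$V$, and verifies, via the $F$\=/action governed by $g\mapsto g^p$ on the conjugacy class index, that it still avoids~$\im(F-1)$. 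For $G$ torsion-free hyperbolic and not cyclic, \autoref{hyperbolic}\ref{i:hyperbolic-maxinfcyc} supplies infinitely many conjugacy classes of maximal infinite cyclic subgroups, and the analogous construction via~\eqref{eq:EG(VCyc)-hyperbolic} yields the obstruction.

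The main obstacle will be making the identification of~$\im(\pi_0\asbl^\TR)$ inside $W(\IZ_{(p)}[G])$ sufficiently explicit, and tracking the Frobenius on the cokernel carefully enough to certify that~$x$ satisfies both non-membership conditions at once. The conceptual point, and the reason the construction collapses in the cyclic case, is that the ring- and Witt-theoretic interactions between independent maximal infinite cyclic subgroups of~$G$ have no counterpart within any single virtually cyclic subgroup.
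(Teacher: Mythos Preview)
Your second step correctly describes the image of~$\pi_0(\asbl^\TR)$ as a \emph{sum} of contributions, one for each maximal infinite virtually cyclic~$V$, but your third step then tests only the weaker criterion that $x$ be ``not realizable from any single virtually cyclic~$V$.'' These do not match: to show $x\notin\im(\pi_0\asbl^\TR)$ you must exclude every finite sum of elements coming from the various~$V$'s, not just elements from one~$V$ at a time. Any Witt-ring combination of the Teichm\"uller lifts $[e_1],[e_2],[e_1+e_2]$ (which are the monomials $[x_1],[x_2],[x_1x_2]$) and their Verschiebungs is itself a finite sum of elements each lying in some~$W(\IZ_{(p)}[\langle c\rangle])$, and hence in the image. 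The fact that no virtually cyclic subgroup contains two of $e_1,e_2,e_1+e_2$ is irrelevant, because the source of the assembly map is the homotopy colimit over \emph{all} cyclic subgroups simultaneously, and its image is closed under addition.

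The actual obstruction, as the paper shows, is a $\lim$-versus-$\bigoplus$ phenomenon that requires the set~$\CM$ of conjugacy classes of maximal cyclic subgroups to be infinite. After passing to the Whitehead theory~$\Wh^\TR$ (legitimate by \autoref{Wh-trick}), the level-$n$ assembly maps are $\pi_*$-isomorphisms by \autoref{pro-iso-TC}, and on~$\pi_0$ the assembly map for~$\Wh^\TR$ identifies with the canonical map
\[
\bigoplus_{C\in\CM}\lim_{n}\wh_0^{\tr^n}(C)\TO\lim_{n}\bigoplus_{C\in\CM}\wh_0^{\tr^n}(C)\,.
\]
By \autoref{keylemma}\ref{i:ker(R)-Wh-TR} each tower $\{\wh_0^{\tr^n}(C)\}_n$ consists of surjections with nontrivial kernel; since $\CM$ is infinite, the displayed map is not surjective (\autoref{lim-sum}). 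An element not in the image is a compatible system whose support among the~$C\in\CM$ grows without bound as~$n\to\infty$---it cannot be captured by any finite collection of cyclic subgroups, let alone three.
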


In fact, the assumptions on $\spec{A}$ and~$G$ can be relaxed, as explained in \autoref{more-counterexamples}.

We begin with the following general observations, relating the analysis of the assembly maps for $\tr$ and~$\tc$ to the assembly maps for the corresponding ``Whitehead'' theories.

Let $\MOR{\T}{\Groupoids}{\IN\Sp}$ be a functor, e.g., $\T=\TC(\spec{A}[-];p)$.
Assume that $\T$ preserves equivalences, i.e., it sends equivalences of groupoids to $\pi_*$-isomorphisms.
Given any space~$X$ there is the classical assembly map
\begin{equation}
\label{eq:classical-asbl-Pi(X)}
X_+\sma\T(1)\TO\T(\Pi(X))
\,,
\end{equation}
where $\Pi(X)$ denotes the fundamental groupoid of~$X$, and $1$ denotes the trivial groupoid.
We define $\Wh^\T(X)$ to be the homotopy cofiber of~\eqref{eq:classical-asbl-Pi(X)}.

Given a group~$G$, consider the composition
\[
\Or G
\xrightarrow{\oid{G}{-}}
\Groupoids
\xrightarrow{\ B\ }
\Top
\,,
\]
where $B=\real{N_\bullet(-)}$ is the classifying space functor,
and obtain the diagram
\begin{equation}
\label{eq:classical-asbl-Wh}
B(\oid{G}{-})_+ \sma \T(1)
\TO
\T(\Pi(B(\oid{G}{-}))
\TO
\Wh^\T(B(\oid{G}{-}))
\end{equation}
of functors $\Or G\TO\IN\Sp$.
By definition, \eqref{eq:classical-asbl-Wh} is objectwise a homotopy cofibration sequence.
To simplify the notation, we let $\Wh^\T(H)=\Wh^\T(B(\oid{G}{\,(G/H)}))$.

\begin{lemma}
\label{Wh-trick}
Assume that the assembly map
\[
\MOR{\pi_q\bigl(\asbl^{\T}\bigr)}{\pi_q\Bigl(EG(\CF)_+\sma_{\Or G}\T\Bigr)}{\pi_q\bigl(\T(G)\bigr)}
\]
is injective for some $q\in\IZ$.
Then:
\begin{enumerate}
\item
\label{i:Wh-trick-inj}
the assembly map
\[
\MOR{\pi_q\bigl(\asbl^{\Wh^{\T}}\bigr)}{\pi_q\Bigl(EG(\CF)_+\sma_{\Or G}\Wh^\T\Bigr)}{\pi_q\bigl(\Wh^\T(G)\bigr)}
\]
is also injective;
\item
\label{i:Wh-trick-surj}
$\pi_q\bigl(\asbl^{\T}\bigr)$ is surjective if and only if $\pi_q\bigl(\asbl^{\Wh^{\T}}\bigr)$ is surjective.
\end{enumerate}
\end{lemma}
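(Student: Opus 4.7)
I would apply the homotopy colimit $EG(\CF)_+ \sma_{\Or G} -$ to the objectwise homotopy cofibration sequence~\eqref{eq:classical-asbl-Wh} of functors $\Or G \to \IN\Sp$, and compare it with the same sequence evaluated at $G/G$. This yields the commutative diagram of spectra
\[
\begin{tikzcd}[cramped, column sep=small]
EG(\CF)_+ \sma_{\Or G} F_1 \ar[r] \ar[d, "\asbl_{F_1}"'] & EG(\CF)_+ \sma_{\Or G} F_2 \ar[r] \ar[d, "\asbl^\T"'] & EG(\CF)_+ \sma_{\Or G} F_3 \ar[d, "\asbl^{\Wh^\T}"] \\
BG_+ \sma \T(1) \ar[r] & \T(G) \ar[r] & \Wh^\T(G)
\end{tikzcd}
\]
in which $F_1 = B(\oid{G}{-})_+ \sma \T(1)$, $F_2 = \T(\Pi B(\oid{G}{-}))$ is naturally $\pi_*$-equivalent to $\T(\oid{G}{-})$, and $F_3 = \Wh^\T(B(\oid{G}{-}))$; both rows are homotopy cofibration sequences.

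The crucial step is to show that the leftmost vertical map $\asbl_{F_1}$ is a $\pi_*$-isomorphism in every degree. Because $\T(1)$ is constant in $\Or G$, smashing with it commutes with the coend, so the question reduces to showing that the space-level map $EG(\CF)_+ \sma_{\Or G} B(\oid{G}{-})_+ \to BG_+$ induced by $EG(\CF) \to \pt$ is a weak equivalence. Under the standard identification, its source is naturally equivalent to the Borel construction $(EG \times_G EG(\CF))_+$, and since the trivial subgroup always belongs to $\CF$ the underlying non-equivariant space of $EG(\CF)$ is contractible, whence $EG \times_G EG(\CF) \simeq BG$.

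With this in hand, both claims follow by comparing the horizontal long exact sequences, in which the vertical maps induced by $\asbl_{F_1}$ are isomorphisms in every degree. For~\ref{i:Wh-trick-inj}, a five-lemma argument in the range $\pi_{q+1}, \pi_q, \pi_{q-1}$ converts injectivity of $\pi_q(\asbl^\T)$ into injectivity of $\pi_q(\asbl^{\Wh^\T})$. For~\ref{i:Wh-trick-surj}, the direction ``$\pi_q(\asbl^{\Wh^\T})$ surjective $\Rightarrow$ $\pi_q(\asbl^\T)$ surjective'' is a direct diagram chase using only the isomorphism $\pi_q(\asbl_{F_1})$ and not requiring the injectivity hypothesis. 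For the converse, I would take vertical homotopy cofibers in the diagram above: they form a cofibration sequence $\hocofib(\asbl_{F_1}) \to \hocofib(\asbl^\T) \to \hocofib(\asbl^{\Wh^\T})$ whose first term is contractible, yielding a $\pi_*$-isomorphism $\hocofib(\asbl^\T) \to \hocofib(\asbl^{\Wh^\T})$; combining this with the injectivity of $\pi_q(\asbl^\T)$ and the compatibility of the natural projection $\T(G) \to \Wh^\T(G)$ inside both long exact sequences closes the argument.

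The main obstacle will be the converse direction in~\ref{i:Wh-trick-surj}: a naive chase only yields that the boundary map $\pi_q(\Wh^\T(G)) \to \pi_q(\hocofib(\asbl^{\Wh^\T}))$ vanishes on the image of $\pi_q(\T(G)) \to \pi_q(\Wh^\T(G))$, and promoting this to identical vanishing on the whole target requires using both the $\pi_*$-isomorphism $\hocofib(\asbl^\T) \simeq \hocofib(\asbl^{\Wh^\T})$ and the injectivity hypothesis on $\pi_q(\asbl^\T)$ to control the connecting homomorphism at degree $q-1$.
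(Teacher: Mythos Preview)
Your setup is exactly right, and your treatment of~\ref{i:Wh-trick-inj} and of the implication ``$\pi_q(\asbl^{\Wh^\T})$ surjective $\Rightarrow$ $\pi_q(\asbl^\T)$ surjective'' in~\ref{i:Wh-trick-surj} is correct and matches what the paper has in mind (the paper simply cites~\cite{kc} for both parts).  The identification of the left vertical map as a $\pi_*$-isomorphism via $EG\times_G EG(\CF)\simeq BG$ is the key point, and you have it.

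The gap is precisely where you flag it: the converse direction in~\ref{i:Wh-trick-surj}.  Your proposed fix---combining the equivalence $\hocofib(\asbl^\T)\simeq\hocofib(\asbl^{\Wh^\T})$ with the injectivity of $\pi_q(\asbl^\T)$---does not close the argument at the level of generality you are working at.  Concretely, take $A_1=A_2=B_1$ to be an Eilenberg--Mac~Lane spectrum with a single nonzero homotopy group in degree~$q-1$, let $B_2=0$, and let $A_1\to A_2$ and $A_1\to B_1$ be the identity.  Then the left column is a $\pi_*$-isomorphism, the middle column is a $\pi_q$-isomorphism (both sides vanish), yet $A_3\simeq 0$ while $\pi_q(B_3)\neq 0$, so the right column is not $\pi_q$-surjective.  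In other words, knowing only that the left vertical is a $\pi_*$-iso and the middle vertical is a $\pi_q$-iso does \emph{not} force $\pi_q$-surjectivity on the right: the obstruction lives in $\ker\pi_{q-1}(\asbl^\T)\cap\operatorname{im}\bigl(\pi_{q-1}(F_1\text{-source})\to\pi_{q-1}(\T\text{-source})\bigr)$, and nothing in your hypotheses forces this to vanish.

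So your plan, as written, does not prove the converse in~\ref{i:Wh-trick-surj}.  The paper does not spell out the argument either; it says the claim ``is an immediate consequence of the proof of~(i) in loc.~cit.'', i.e., of the specific argument in~\cite{kc}.  You should therefore not expect to recover this direction from the bare ladder of long exact sequences alone; you need to go back to the actual proof in~\cite{kc} and see what additional structure of the assembly-map situation is being used there.
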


\begin{proof}
\ref{i:Wh-trick-inj} is proved in~\cite{kc}*{Proof of Addendum~1.18, pages~1010--1011} in the special case when $\T=\K^{\ge0}(\IZ[-])$, but the argument works without changes for any~$\T$.

\ref{i:Wh-trick-surj} is an immediate consequence of the proof of~\ref{i:Wh-trick-inj} in loc.~cit.
\end{proof}

\begin{lemma}
\label{Wh-holim}
Let \[\dotsb\TO\T^{n+1}\TO\T^{n}\TO\dotsb\TO\T^2\TO\T^1\] be a sequence of functors $\Groupoids\TO\IN\Sp$ that preserve equivalences, and let $\T=\holim_{n}\T^{n}$.
If a group $G$ has a classifying space~$BG$ that is a finite CW complex, then there is a natural $\pi_*$-isomorphism
\[
\Wh^\T(G)\TO\holim_{n\in\IN}\Wh^{\T^n\!}(G)
\,.
\]
\end{lemma}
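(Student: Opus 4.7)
The plan is to unwind the definition of $\Wh^\T$ as a homotopy cofiber and to exploit the fact that, in spectra, homotopy cofiber and homotopy fiber sequences coincide, while sequential homotopy limits preserve fiber sequences. The finiteness of $BG$ will enter only through commuting a smash product past a $\holim$.

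First, since $\holim$ is formed pointwise on groupoids, the projections $\T \TO \T^n$ induce a natural transformation $\Wh^\T \TO \Wh^{\T^n}$ of functors $\Or G \TO \IN\Sp$, and hence a canonical map $\Wh^\T(G) \TO \holim_n \Wh^{\T^n}(G)$. By construction one has canonical $\pi_*$-isomorphisms $\T(G) \simeq \holim_n \T^n(G)$ and $\T(1) \simeq \holim_n \T^n(1)$, compatible with the classical assembly map of~\eqref{eq:classical-asbl-Pi(X)}.

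The crucial step is to show that, because $BG$ is a finite CW complex, the natural map
\[
BG_+ \sma \holim_n \T^n(1) \TO \holim_n\bigl(BG_+ \sma \T^n(1)\bigr)
\]
is a $\pi_*$-isomorphism. I would argue by induction on the number of cells of $BG$. The base case $BG=\pt$ is trivial. The inductive step uses a cell-attachment cofiber sequence $S^{k-1}_+ \TO X_+ \TO X'_+$, together with the facts that $(-) \sma \BE$ preserves cofiber sequences for any spectrum $\BE$, that in spectra cofiber sequences are fiber sequences, that $\holim_n$ preserves fiber sequences, and that $S^k \sma -$ trivially commutes with sequential homotopy limits. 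The assumption that $BG$ has only finitely many cells is essential, since the analogous commutation fails for infinite wedges.

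Combining these observations yields a map of homotopy fiber sequences of spectra
\[
\begin{tikzcd}[cramped]
BG_+ \sma \T(1) \arrow[r] \arrow[d, "\simeq"'] & \T(G) \arrow[r] \arrow[d, "\simeq"] & \Wh^\T(G) \arrow[d] \\
\holim_n\bigl(BG_+ \sma \T^n(1)\bigr) \arrow[r] & \holim_n \T^n(G) \arrow[r] & \holim_n \Wh^{\T^n}(G)
\end{tikzcd}
\]
in which the first two vertical maps are $\pi_*$-isomorphisms, hence so is the third. The main obstacle is the middle step: passing the smash product through a sequential $\holim$ is precisely where the finiteness hypothesis on $BG$ is used, and without it the argument breaks down.
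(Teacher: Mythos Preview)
Your argument is correct and follows exactly the strategy the paper uses: the paper's proof is the one-line remark that homotopy limits commute up to $\pi_*$-isomorphism with homotopy cofibers and with $X_+\sma-$ for finite CW~$X$, and you have simply spelled out both commutation statements (the latter by cell induction, the former via the cofiber/fiber equivalence in spectra). One small notational slip: for a cell attachment $X'=X\cup_{S^{k-1}}D^k$ the relevant pointed cofiber sequence is $X_+\to X'_+\to S^k$ rather than $S^{k-1}_+\to X_+\to X'_+$, but this does not affect the induction.
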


\begin{proof}
This follows easily from the definitions, noticing that homotopy limits commute up to $\pi_*$-isomorphisms with homotopy cofibers and with the functor $X_+\sma-$, provided that $X$ is a finite CW complex.
\end{proof}

Now we specialize to the case $\T=\TC(\spec{A}[-];p)=\holim_{n}\TC^n(\spec{A}[-];p)$.
We use the abbreviations
$\Wh^\TC=\Wh^{\TC(\spec{A}[-];p)}$
and
$\wh^\tc_q(G)=\pi_q(\Wh^\TC(G))$,
$q\in\IZ$, and similarly for $\TC^n$ and $\TR^n$.

The following result provides the essential computations used in the proof of \autoref{not-surj-TR-TC}.
This result is based on theorems of Hesselholt and Madsen about topological cyclic homology of polynomial rings~\citelist{\cite{HM-mixed}*{Theorem~C, page~4} \cite{H-S1}*{Theorem~2, page~139}}.

\begin{theorem}
\label{keylemma}
Assume that~$\spec{A}$ is a connective ring spectrum whose homotopy groups are $\IZ_{(p)}$-modules.
Let~$C$ be an infinite cyclic group.
\begin{enumerate}
\item
\label{i:ker(R)-Wh-TR}
For each $n\ge2$ there is a short exact sequence
\[
0
\TO
\bigoplus_{\substack{j\in\IZ-p\IZ\\-n<t<\infty}}
\pi_0\spec{A}
\TO
\wh_0^{\tr^{n\!}}(C)
\TO[R]
\wh_0^{\tr^{n-1\!}}(C)
\TO
0
\,.
\]

\item
\label{i:ker(R)-Wh-TC}
Assume that $\pi_0\spec{A}\cong\IZ_{(p)}$.
Then for each~$n\ge3$ there is a short exact sequence
\[
0
\TO
\bigoplus_{j\in\IZ-p\IZ}
\IF_p
\TO
\wh_{-1}^{\tc^{n\!}}(C)
\TO[R]
\wh_{-1}^{\tc^{n-1\!}}(C)
\TO
0
\,.
\]
\end{enumerate}
\end{theorem}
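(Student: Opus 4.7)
The plan is to reduce to the Hesselholt-Madsen computations of~$\THH$ and~$\TR$ of Laurent polynomial rings via the fundamental cofibration sequence, and to read off the stated short exact sequences from their explicit identifications. Since $C$ is infinite cyclic, $BC\simeq S^1$ is a finite CW~complex, and the group ring spectrum~$\spec{A}[C]$ can be identified with~$\spec{A}[t,t^{-1}]$.

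For part~(i), I would start with the fundamental cofibration sequence
\[
\THH(\spec{A}[C])_{h\Cp{n-1}}\TO\TR^n(\spec{A}[C];p)\TO[R]\TR^{n-1}(\spec{A}[C];p),
\]
which is natural in~$\spec{A}[C]$ and hence compatible with the classical assembly maps, so that taking cofibers of the classical assembly produces a cofibration sequence of Whitehead spectra
\[
\Wh^{\THH_{h\Cp{n-1}}}(C)\TO\Wh^{\tr^n}(C)\TO[R]\Wh^{\tr^{n-1}}(C).
\]
Next, I would use the standard decomposition $\THH(\spec{A}[C])\simeq\THH(\spec{A})\sma\real{CN_\bullet(C)}_+$ together with $\real{CN_\bullet(C)}\simeq C\timesd BC$ (which holds since~$C$ is abelian) to write $\THH(\spec{A}[C])$ as a wedge indexed by $m\in\IZ$, whose $m$-th summand is $\THH(\spec{A})\sma BC_+$ endowed with an $S^1$-action of weight~$m$ on the $BC$-factor. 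The classical assembly map corresponds to the $m=0$ piece, and hence $\Wh^{\THH}(C)$ is a wedge indexed by $m\in\IZ-\{0\}$.

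I would then apply the Hesselholt-Madsen computation \cite{HM-mixed}*{Theorem~C} to identify the $\Cp{n-1}$-homotopy orbits of these weighted summands. After writing each nonzero~$m$ as $m=jp^s$ with $j\in\IZ-p\IZ$ and~$s\ge0$, the resulting $\pi_0$-level contributions to $\wh_0^{\tr^n}(C)$ are copies of $\pi_0\spec{A}$ indexed by pairs~$(j,t)$, where~$t$ is a Hesselholt-Madsen ``depth'' parameter tracking how each summand propagates up the restriction tower; this matches the stated index set $\{(j,t):j\in\IZ-p\IZ,\ -n<t<\infty\}$. Under this identification, $R$~sends each summand at level~$n$ either to its counterpart at level~$n-1$ or to zero, and its kernel consists precisely of those pairs appearing for the first time at level~$n$. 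To obtain a short exact sequence rather than merely the long exact sequence of the cofibration, I would verify the vanishing of the connecting map $\pi_0\Wh^{\tr^{n-1}}(C)\to\pi_{-1}\Wh^{\THH_{h\Cp{n-1}}}(C)$; this follows from the explicit sections provided by the Hesselholt-Madsen decomposition, combined with the connectivity of $\THH(\spec{A}[C])_{h\Cp{n-1}}$.

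For part~(ii), I would use $\TC^n=\hoeq(R,F\colon\TR^n\to\TR^{n-1})$ and the induced long exact sequence relating $\wh^{\tc^n}$ to $\wh^{\tr^n}$ and $\wh^{\tr^{n-1}}$. Under the hypothesis $\pi_0\spec{A}\cong\IZ_{(p)}$, the Frobenius~$F$ acts on each $\IZ_{(p)}$-summand identified in part~(i) as multiplication by~$p$, via the Verschiebung-Frobenius type relations which are the content of \cite{H-S1}*{Theorem~2}. The $R-F$ equalizer then contributes, in the kernel of $R\colon\wh_{-1}^{\tc^n}(C)\to\wh_{-1}^{\tc^{n-1}}(C)$, a copy of $\IZ_{(p)}/p\IZ_{(p)}\cong\IF_p$ for each $j\in\IZ-p\IZ$, giving the asserted exact sequence. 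The main obstacle throughout is the careful bookkeeping of the Hesselholt-Madsen splittings and the precise identification of~$R$ and~$F$ on the various summands: both the combinatorial matching of the indexing with the stated set and the verification that the relevant boundary maps vanish must be extracted with care from the cited results.
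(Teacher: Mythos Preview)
Your approach to part~(i) is viable but less direct than the paper's. The paper quotes Hesselholt's structural result \cite{H-S1}*{Theorem~2} outright: it already gives an explicit direct-sum decomposition of $\tr^n_q(\spec{A}[C];p)$ and identifies the image of the classical assembly map with two of the summands. Specializing to $q=0$, where $\tr_0^n(\spec{A};p)\cong W_n(\pi_0\spec{A})$ and $\tr_{-1}^n(\spec{A};p)=0$ by connectivity, one sees that $R$ acts as the Witt-vector restriction on every summand, and the standard short exact sequence $0\to\pi_0\spec{A}\xrightarrow{V^{m-1}}W_m\xrightarrow{R}W_{m-1}\to0$ gives part~(i) at once. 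Your plan to rebuild this from the fundamental cofibration sequence and the cyclic-nerve splitting could be made to work, but computing $\pi_0\Wh^{\THH_{h\Cp{n-1}}}(C)$ and matching it to the stated index set would amount to re-deriving what Hesselholt's theorem already packages.

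For part~(ii) there is a genuine gap. Your claim that $F$ acts as multiplication by~$p$ on each $\IZ_{(p)}$-summand is incorrect: on the $t\ge0$ summands $F$ is the Witt-vector Frobenius (on the basis $\{V^i(1)\}$ it sends $V^0(1)\mapsto V^0(1)$ and $V^i(1)\mapsto pV^{i-1}(1)$, not multiplication by~$p$), and on the $s<0$ summands it is given by Hesselholt's maps~$L_s$. More seriously, you have not explained why the kernel of $R$ on $\wh_{-1}^{\tc^n}(C)=\coker(F-R)$ is exactly $\bigoplus_j\IF_p$ rather than, say, $\bigoplus_{j,t}\IF_p$. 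The paper sets up a Snake Lemma with rows from part~(i) and columns given by $F-R$; the induced map~$\lambda$ on the kernels is multiplication by~$p$ composed with the shift $t\mapsto t+1$, so $\coker\lambda\cong\bigoplus_{j,t}\IF_p$. To obtain the stated answer one must compute the connecting homomorphism~$\partial$ explicitly (using a $\IZ_{(p)}$-linear section of~$R$ on Witt vectors and the identities $S\ell_p=\ell_pS$, $SF=FS$, $SR-\id=-\prlast$) and show that $\im\partial=\ker\nabla$, where $\nabla$ sums the $t$-components for each fixed~$j$. The inclusion $\ker\nabla\subseteq\im\partial$ requires constructing, for each $j$ and each $T>-n+2$, an explicit element of $\ker(F-R)$ with prescribed image under~$\partial$. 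This construction is the heart of the argument and is entirely absent from your sketch; the one-line appeal to ``Verschiebung--Frobenius type relations'' does not substitute for it.
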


\begin{proof}
Define
\begin{equation}
\label{eq:Mnq}
M^n_q=\bigoplus_{j\in\IZ-p\IZ}\bigl(\tr^n_{q}(\spec{A};p)\oplus\tr^n_{q-1}(\spec{A};p)\bigr)
.
\end{equation}
With this notation \cite{H-S1}*{Theorem~2, page~139} describes, in particular, an isomorphism of abelian groups
\begin{equation}
\label{eq:TRn-Laurent}
\tr^n_{q}(\spec{A};p)\oplus\tr^n_{q-1}(\spec{A};p)
\oplus
\bigoplus_{-n<s<0}M^{n+s}_q
\oplus
\bigoplus_{0\le t<\infty}M^{n}_q
\TO[\cong]
\tr^n_q(\spec{A}[C];p)
\end{equation}
for any symmetric ring spectrum~$\spec{A}$ whose homotopy groups are $\IZ_{(p)}$-modules.
To see that the source of the isomorphism~\eqref{eq:TRn-Laurent} agrees with the group in~\cite{H-S1}*{first display after Theorem~2, page~139}, one just needs to rewrite the index set~$\IZ$ for the first direct sum in~loc.~cit.\ as the disjoint union
\[
\{0\}\cup\bigcup_{t\ge0}\SET{jp^t}{j\in\IZ-p\IZ}
\]
and replace $s$ with $-s$.
For a chosen generator~$x$ of~$C$, the map~\eqref{eq:TRn-Laurent} sends
\begin{gather*}
\Bigl(
a,b,
\bigl(
(a_{s,j},b_{s,j})_{j\in\IZ-p\IZ}
\bigr)_{-n<s<0}
\,,
\bigl(
(a_{t,j},b_{t,j})_{j\in\IZ-p\IZ}
\bigr)_{0\le t<\infty}
\Bigr)
\\\shortintertext{to}
a[x]_{n}^{0}
+
b\,d\!\log[x]_n
+
\adjustlimits\sum_{-n<s<0}\sum_{j\in\IZ-p\IZ}
\Bigl(
V^{-s}
\bigl(
a_{s,j}[x]_{n+s}^{j}
\bigr)
+
dV^{-s}
\bigl(
b_{s,j}[x]_{n+s}^{j}
\bigr)
\Bigr)
\\
+
\adjustlimits\sum_{0\le t<\infty}\sum_{j\in\IZ-p\IZ}
\Bigl(
a_{t,j}[x]_{n}^{jp^t}
+
b_{t,j}[x]_{n}^{jp^t}d\!\log[x]_n
\Bigr)
.
\end{gather*}

As remarked in loc.~cit., the image of the classical assembly map
\[
\tr^n_{q}(\spec{A};p)\oplus\tr^n_{q-1}(\spec{A};p)
\cong
\pi_q\bigl(BC_+\sma\TR^n(\spec{A};p)\bigr)
\TO
\tr^n_q(\spec{A}[C];p)
\]
corresponds exactly to the first two direct summands in the source of~\eqref{eq:TRn-Laurent}.
Therefore we get an isomorphism
\begin{equation}
\label{eq:Wh_q-TRn-Laurent}
\bigoplus_{-n<s<0}M^{n+s}_q
\oplus
\bigoplus_{0\le t<\infty}M^{n}_q
\TO[\cong]
\wh^{\tr^{n\!}}_q(C)
\,.
\end{equation}
Explicit formulas for the maps $R$ and~$F$ with respect to the isomorphism~\eqref{eq:TRn-Laurent} are given in \cite{H-S1}*{page~140}.
In particular, both maps respect the decomposition of~$\tr^n_q(\spec{A}[C];p)$ as the direct sum of the image of the classical assembly map and~$\wh^{\tr^{n\!}}_q(C)$.
On~$\wh^{\tr^{n\!}}_q(C)$, with respect to the isomorphism~\eqref{eq:Wh_q-TRn-Laurent}, the maps~$R$ and $F$ are described as follows.
\[
\begin{tikzcd}[column sep=tiny, row sep=.6em]
M^{1}_q\;
\arrow[draw=none, r, "\ts\oplus" description]
&
\;M^{2}_q\;
\arrow[draw=none, r, "\ts\oplus" description]
\arrow[ddd, "R"']
&
\dotsb
\arrow[draw=none, r, "\ts\oplus" description]
&
M^{n-2}_q
\arrow[draw=none, r, "\ts\oplus" description]
\arrow[ddd, "R"']
&
M^{n-1}_q
\arrow[draw=none, r, "\ts\oplus" description]
\arrow[ddd, "R"']
&
\mathmakebox[\widthof{$M^{n-1}_q$}][c]{M^{n}_q}
\arrow[draw=none, r, "\ts\oplus" description]
\arrow[ddd, "R"']
&
\mathmakebox[\widthof{$M^{n-1}_q$}][c]{M^{n}_q}
\arrow[draw=none, r, "\ts\oplus" description]
\arrow[ddd, "R"']
&
\dotsb
\arrow[rrr, "\cong" pos=.42]
&&&
\wh^{\tr^{n\!}}_q(C)
\arrow[ddd, "R"]
\\
\\
\\
&
\;M^{1}_q\;
\arrow[draw=none, r, "\ts\oplus" description]
&
\dotsb
\arrow[draw=none, r, "\ts\oplus" description]
&
M^{n-3}_q
\arrow[draw=none, r, "\ts\oplus" description]
&
M^{n-2}_q
\arrow[draw=none, r, "\ts\oplus" description]
&
M^{n-1}_q
\arrow[draw=none, r, "\ts\oplus" description]
&
M^{n-1}_q
\arrow[draw=none, r, "\ts\oplus" description]
&
\dotsb
\arrow[rrr, "\cong"]
&&&
\wh^{\tr^{n-1\!}}_q(C)
\\
M^{1}_q\;
\arrow[draw=none, r, "\ts\oplus" description]
\arrow[dddr, pos=.2, "L_{-n+1}\mspace{-14mu}"', end anchor={[xshift=.67ex]}]
&
\;M^{2}_q\;
\arrow[draw=none, r, "\ts\oplus" description]
\arrow[dddr, pos=.2, "L_{-n+2}\mspace{-14mu}"', end anchor={[xshift=1.0ex]}]
&
\dotsb
\arrow[draw=none, r, "\ts\oplus" description]
&
M^{n-2}_q
\arrow[draw=none, r, "\ts\oplus" description]
\arrow[dddr, pos=.2, "L_{-2}\mspace{-14mu}"']
&
M^{n-1}_q
\arrow[draw=none, r, "\ts\oplus" description]
\arrow[dddr, pos=.2, "L_{-1}\mspace{-14mu}"']
&
\mathmakebox[\widthof{$M^{n-1}_q$}][c]{M^{n}_q}
\arrow[draw=none, r, "\ts\oplus" description]
\arrow[dddr, pos=.2, "F\!"']
&
\mathmakebox[\widthof{$M^{n-1}_q$}][c]{M^{n}_q}
\arrow[draw=none, r, "\ts\oplus" description]
\arrow[dddr, pos=.2, "F\!"', end anchor={[xshift=.67ex]}]
&
\dotsb
\arrow[rrr, "\cong" pos=.42]
&&&
\wh^{\tr^{n\!}}_q(C)
\arrow[ddd, "F"]
\\
\\
\\
&
\;M^{1}_q\;
\arrow[draw=none, r, "\ts\oplus" description]
&
\mathmakebox[\widthof{$\dotsb$}][c]{\vphantom{M^{1}_q}\dotsb}
\arrow[draw=none, r, "\ts\oplus" description]
&
M^{n-3}_q
\arrow[draw=none, r, "\ts\oplus" description]
&
M^{n-2}_q
\arrow[draw=none, r, "\ts\oplus" description]
&
M^{n-1}_q
\arrow[draw=none, r, "\ts\oplus" description]
&
M^{n-1}_q
\arrow[draw=none, r, "\ts\oplus" description]
&
\mathmakebox[\widthof{$\dotsb$}][c]{\vphantom{M^{1}_q}\dotsb}
\arrow[rrr, "\cong"]
&&&
\wh^{\tr^{n-1\!}}_q(C)
\\
\scriptstyle{\mathclap{s=-n+1}}
&
&
\dotsb
&
\scriptstyle{\mathclap{s=-2}}
&
\scriptstyle{\mathclap{s=-1}}
&
\scriptstyle{\mathclap{t=0}}
&
\scriptstyle{\mathclap{t=1}}
&
\dotsb
\end{tikzcd}
\hspace{-1em}
\]
Here the maps $\MOR{R,F}{M^{n}_q}{M^{n-1}_q}$ respect the direct sum decomposition~\eqref{eq:Mnq} and are given by $R$ and $F$, respectively, on each summand.
The endomorphisms $L_{-n+1},L_{-n+2},\dotsc,L_{-2},L_{-1}$ respect the $\bigoplus_{j\in\IZ-p\IZ}$ decomposition in~\eqref{eq:Mnq}.
The map~$L_{-1}$ is given on the summand indexed by~$j$ by
\begin{align*}
\MOR{\begin{pmatrix}
\ell_p & d+\ell_{(p-1)\eta}
\\
0 & \ell_{(-1)^{q-1}j}
\end{pmatrix}&}%
{\tr^{n-1}_{q}(\spec{A};p)\oplus\tr^{n-1}_{q-1}(\spec{A};p)}%
{\tr^{n-1}_{q}(\spec{A};p)\oplus\tr^{n-1}_{q-1}(\spec{A};p)}
\,;
\\
\shortintertext{the maps~$L_{-n+1},L_{-n+2},\dotsc,L_{-2}$ are given by}
\MOR{\begin{pmatrix}
\ell_p & \ell_{(p-1)\eta}
\\
0 & \id
\end{pmatrix}&}%
{\tr^{n+s}_{q}(\spec{A};p)\oplus\tr^{n+s}_{q-1}(\spec{A};p)}%
{\tr^{n+s}_{q}(\spec{A};p)\oplus\tr^{n+s}_{q-1}(\spec{A};p)}
\end{align*}
on each of the $j$-indexed summands.
Here $d$ is the differential, $\ell_a$ denotes multiplication by~$a$, and $\eta\in\tr^1_1(\spec{S};p)$ is the Hopf class; compare~\cite{H-S1}*{pages~137--138}.
Notice that all maps respect the $\bigoplus_{j\in\IZ-p\IZ}$ decomposition, and with the sole exception of~$L_{-1}$ they do not depend on~$j$.
When $q=0$, the map~$L_{-1}$ does not depend on~$j$ either.

Now we specialize to~$q=0$.
The groups $\tr_0^n(\spec{A};p)$ are isomorphic to~$W_n(\pi_0\spec{A})$, the $p$-typical Witt vectors of length~$n$ of~$\pi_0\spec{A}$, and these isomorphisms commute with the maps $R,F,V$.
So from~\eqref{eq:Wh_q-TRn-Laurent} we obtain isomorphisms
\begin{equation}
\label{eq:Wh_0-TRn-Laurent}
\bigoplus_{j\in\IZ-p\IZ}
\Biggl(
\bigoplus_{-n<s<0}W_{n+s}(\pi_0\spec{A})
\oplus
\bigoplus_{0\le t<\infty}W_{n}(\pi_0\spec{A})
\Biggr)
\TO[\cong]
\wh^{\tr^{n\!}}_0(C)
\,.
\end{equation}
Under~\eqref{eq:Wh_0-TRn-Laurent}, the map $\MOR{R}{\wh^{\tr^{n\!}}_0(C)}{\wh^{\tr^{n-1\!}}_0(C)}$ respects the $\bigoplus_{j\in\IZ-p\IZ}$ decomposition and is given by~$\MOR{R}{W_m(\pi_0\spec{A})}{W_{m-1}(\pi_0\spec{A})}$ on each summand.

Using the short exact sequences
\[
0
\TO
\pi_0\spec{A}
\cong
W_{1}  (\pi_0\spec{A})
\xrightarrow{V^{m-1}}
W_{m}  (\pi_0\spec{A})
\TO[R]
W_{m-1}(\pi_0\spec{A})
\TO
0
\]
we obtain the short exact sequence in~\ref{i:ker(R)-Wh-TR}.

In order to prove~\ref{i:ker(R)-Wh-TC}, consider the following diagram.
\begin{equation}
\label{eq:snake}
\begin{tikzcd}[row sep=.6em, column sep=scriptsize]
\\
0
\arrow[r, dotted]
&
\ds\bigoplus_{\substack{j\in\IZ-p\IZ\\-n<t<\infty}}
\tors_p\pi_0\spec{A}
\arrow[r, dotted]
\arrow[d, shorten <=-.6ex, shorten >=-.6ex]
&
\ker(F-R)
\arrow[r, dotted, "R"]
\arrow[d]
&
\ker(F-R)
\arrow[dddll, dotted, "\partial"' pos=.85, out=0, in=180, looseness=2, overlay]
\arrow[d]
\\
0
\arrow[r]
&
\ds\bigoplus_{\substack{j\in\IZ-p\IZ\\-n<t<\infty}}
\pi_0\spec{A}
\arrow[r]
\arrow[d, shorten <=-.6ex, shorten >=-.6ex, "\lambda"' pos=.36]
&
\wh_{0}^{\tr^{n\!}}(C)
\arrow[r, "R"]
\arrow[d, "F-R"']
&
\wh_{0}^{\tr^{n-1\!}}(C)
\arrow[r]
\arrow[d, "F-R"']
\arrow[l, dashed, bend right, "S"']
&
0
\\
0
\arrow[r]
&
\ds\bigoplus_{\substack{j\in\IZ-p\IZ\\\hphantom{-n}\mathllap{-(n-1)}<t<\infty}}
\pi_0\spec{A}
\arrow[r]
\arrow[d, shorten <=-.6ex, shorten >=-.6ex]
&
\wh_{0}^{\tr^{n-1\!}}(C)
\arrow[r, "R"]
\arrow[d]
&
\wh_{0}^{\tr^{n-2\!}}(C)
\arrow[r]
\arrow[d]
&
0
\\
&
\ds\bigoplus_{\substack{j\in\IZ-p\IZ\\\hphantom{-n}\mathllap{-(n-1)}<t<\infty}}
\pi_0\spec{A}/p\,\pi_0\spec{A}
\arrow[r, dotted]
&
\wh_{-1}^{\tc^{n\!}}(C)
\arrow[r, dotted, "R"]
&
\wh_{-1}^{\tc^{n-1\!}}(C)
\arrow[r, dotted]
&
0
\end{tikzcd}
\end{equation}

The middle two rows are exact by~\ref{i:ker(R)-Wh-TR}.
Let $\lambda$ be the induced map. 
Using the explicit description of the maps $R$ and~$F$ given above and using that $FV=\ell_p$, we see that~$\lambda$ respects the $\bigoplus_{j\in\IZ-p\IZ}$ decomposition and sends the summand indexed by~$t$ to the summand indexed by~$t+1$ via~$\ell_p$.
Notice that $\wh_{-1}^{\tc^{n\!}}(C)=\coker(F-R)$.
Therefore the Snake Lemma produces the dotted exact sequence in~\eqref{eq:snake}.

We are now going to describe explicitly the connecting map~$\partial$.
Fix $j\in\IZ-p\IZ$.
Let
\begin{equation}
\label{eq:a-in-Wh-TR}
a=(a_{-n+2},a_{-n+3},\dotsc,a_{-1},a_0,a_1,\dotsc,a_N,0,0,\dotsc)
\end{equation}
be an element of the summand
\[
W_{1}(\pi_0\spec{A})
\oplus
W_{2}(\pi_0\spec{A})
\oplus
\dotsb
\oplus
W_{n-2}(\pi_0\spec{A})
\oplus
W_{n-1}(\pi_0\spec{A})
\oplus
W_{n-1}(\pi_0\spec{A})
\oplus
\dotsb
\]
indexed by~$j$ in~\eqref{eq:Wh_0-TRn-Laurent}.

Assume that $a\in\ker(F-R)$, i.e., that $a$ is in the upper right corner of~\eqref{eq:snake}.
Using the formulas above, this means that
\begin{equation}
\label{eq:ker(F-R)}
\begin{gathered}
pa_{-n+2}=Ra_{-n+3}
\,,\
\dotsc
\,,\
pa_{-2}=Ra_{-1}
\,,\
pa_{-1}=Ra_{0}
\,,
\\
Fa_{0}=Ra_{1}
\,,\
Fa_{1}=Ra_{2}
\,,\
\dotsc
\,,\
Fa_N=0
\,.
\end{gathered}
\end{equation}

Choose set theoretic sections~$S$ of the surjections~$\MOR{R}{W_m(\pi_0\spec{A})}{W_{m-1}(\pi_0\spec{A})}$ for each~$m$, and define a section~$S$ of $\MOR{R}{\wh^{\tr^{n\!}}_0(C)}{\wh^{\tr^{n-1\!}}_0(C)}$ using them on each summand.
We can assume that $S0=0$.
We obtain that
\begin{multline}
\label{eq:(F-R)(Sx)}
\quad(F-R)Sa=
\\
=(-a_{-n+2},pSa_{-n-2}-a_{-n+3},\dotsc,pSa_{-1}-a_{0},FSa_{0}-a_{1},\dotsc,FSa_N,0,\dotsc)
\,.
\end{multline}
The element $(F-R)Sa$ is in the kernel of $\MOR{R}{\wh^{\tr^{n-1\!}}_0(C)}{\wh^{\tr^{n-2\!}}_0(C)}$ and therefore corresponds to a unique~$b\in\bigoplus_{-(n-1)<t<\infty}\pi_0\spec{A}$ in the summand indexed by the same~$j\in\IZ-p\IZ$.
The class~$[b]$ of~$b$ in the cokernel of~$\lambda$ is by definition the image of~$a$ under the connecting map~$\partial$.

Now assume that $\pi_0\spec{A}\cong\IZ_{(p)}$.
Then $W_n(\IZ_{(p)})$ is a free $\IZ_{(p)}$-module with basis $\SET{V^i(1)}{0\le i\le n-1}$; compare~\cite{HM-mixed}*{Example~1.2.4, page~10}.
The maps $\MOR{F,R}{W_{n}(\IZ_{(p)})}{W_{n-1}(\IZ_{(p)})}$ are $\IZ_{(p)}$-linear and are given by
\begin{alignat}{3}
\label{eq:F-for-Z(p)}
FV^0(1)&=V^0(1)
&\qquad\text{and}\qquad
FV^i(1)&=pV^{i-1}(1)
&\quad
\text{if }&1\le i\le n-1
\,,
\\
\label{eq:R-for-Z(p)}
\qquad
RV^{n-1}(1)&=0
&\qquad\text{and}\qquad
RV^i(1)&=V^i(1)
&
\text{if }&0\le i\le n-2
\,,
\end{alignat}
and of course $V$ is given by~$VV^i(1)=V^{i+1}(1)$.
Even though we do not need it here, we note that the product is determined by
\[
V^i(1)\cdot V^j(1)=p^iV^j(1)
\quad\text{if }0\le i\le j\le n-1
\,.
\]
Define a $\IZ_{(p)}$-linear section~$\MOR{S}{W_{n-1}(\IZ_{(p)})}{W_{n}(\IZ_{(p)})}$ of~$R$ by the obvious formulas~$SV^i(1)=V^i(1)$.
This choice of~$S$ satisfies
\begin{equation}
\label{eq:SF-SR}
S\ell_p=\ell_p S
\,,
\qquad
SF=FS
\,,
\AND
SR-\id=-\prlast
\,,
\end{equation}
where $\prlast$ denotes the projection of~$W_n(\IZ_{(p)})$ onto the subspace generated by $V^{n-1}(1)$, i.e., the kernel of~$R$.
Let~$\MOR{\prlasto}{W_n(\IZ_{(p)})}{\IZ_{(p)}}$ be the map obtained by identifying $\IZ_{(p)}V^{n-1}(1)$ with~$\IZ_{(p)}$.

Using~\eqref{eq:SF-SR} and~\eqref{eq:ker(F-R)}, equation~\eqref{eq:(F-R)(Sx)} now reads
\[
(F-R)Sa
=(-\prlast a_{-n+2},-\prlast a_{-n+3},\dotsc,-\prlast a_{0},-\prlast a_{1},\dotsc)
\,.
\]
(Notice that $\prlast=\id$ on~$W_1(\IZ_{(p)})$.)
Therefore
\begin{equation}
\label{eq:connecting}
\partial a
=(-[\prlasto a_{-n+2}],-[\prlasto a_{-n+3}],\dotsc,-[\prlasto a_{0}],-[\prlasto a_{1}],\dotsc)
\,,
\end{equation}
where $\MOR{[-]}{\IZ_{(p)}}{\IZ_{(p)}/p\IZ_{(p)}\cong\IF_p}$ denotes reduction modulo~$p$.

Writing each~$a_t$ in the basis~$\{V^i(1)\}$ and using the explicit formulas in \eqref{eq:F-for-Z(p)} and~\eqref{eq:R-for-Z(p)}, it is elementary to see that the equations~\eqref{eq:ker(F-R)} imply that
\[
p^{n-2}\Bigl(\ts\sum\limits_t\prlasto a_t\Bigr)=0
\,,
\qquad\text{and hence}\qquad
\ts\sum\limits_t[\prlasto a_t]=0
\,.
\]
Combining this and~\eqref{eq:connecting} we see that
\[
\im\partial\le\ker\nabla
\,,
\qquad
\text{where}
\quad
\MOR{\nabla}%
{\bigoplus_{\substack{j\in\IZ-p\IZ\\-(n-1)<t<\infty}}\IF_p}%
{\bigoplus_{          j\in\IZ-p\IZ                  }\IF_p}
\]
is the map that respects the $\bigoplus_{j\in\IZ-p\IZ}$ decomposition, and on each $j$-summand is defined by adding the $t$-components.

We now prove that also
\begin{equation}
\label{eq:ker=im}
\ker\nabla\le\im\partial
\,,
\end{equation}
which then immediately gives the short exact sequence in~\ref{i:ker(R)-Wh-TC}.

For any fixed~$T>-n+2$, consider the element
\[
z=(-1,0,\dotsc,0,1,0,\dotsc)\in
{\bigoplus_{                        -(n-1)<t<\infty }\IF_p}
\]
with $z_{-n+2}=-1$, $z_T=1$, and all other~$z_t=0$.
In order to prove~\eqref{eq:ker=im}, it is enough to show that each such~$z$ has a preimage under~$\partial$.

Assume that $T\ge0$.
Define~$a$ as in~\eqref{eq:a-in-Wh-TR} by the following formulas:
\begin{alignat*}{3}
&&
a_{-n+2}&=1V^{0}(1)
&&\in W_{1}(\IZ_{(p)})
\,,
\\
&&
a_{-n+3}&=pV^{0}(1)=Spa_{-n+2}
&&\in W_{2}(\IZ_{(p)})
\,,
\\
&&&\vdotswithin{=}&&\vdotswithin{\in W_{n-1}}
\\
&&
a_{-1}&=p^{n-3}V^{0}(1)=Spa_{-2}
&&\in W_{n-2}(\IZ_{(p)})
\,,
\\
&0\le t\le T-1
\qquad&
a_{t}&=p^{n-2}V^{0}(1)=Spa_{-1}
&&\in W_{n-1}(\IZ_{(p)})
\,,
\\
&&
a_{T}&=p^{n-2}V^{0}(1)-1V^{n-2}(1)
&&\in W_{n-1}(\IZ_{(p)})
\,,
\\
&&
a_{T+1}&=p^{n-2}V^{0}(1)-pV^{n-3}(1)=SFa_{T}
&&\in W_{n-1}(\IZ_{(p)})
\,,
\\
&&&\vdotswithin{=}&&\vdotswithin{\in W_{n-1}}
\\
&&
a_{T+n-3}&=p^{n-2}V^{0}(1)-p^{n-3}V^{1}(1)=SFa_{T+n-4}
&&\in W_{n-1}(\IZ_{(p)})
\,,
\\
&T+n-2\le t
\qquad&
a_{t}&=0=SFa_{T+n-3}
&&\in W_{n-1}(\IZ_{(p)})
\,.
\end{alignat*}
Using~\eqref{eq:ker(F-R)} we see that $a\in\ker(F-R)$, and \eqref{eq:connecting} then shows that~$\partial a=z$.
The case~$T<0$ is handled analogously.
This completes the proof of \autoref{keylemma}.
\end{proof}

\begin{example}
If $\pi_0\spec{A}\cong\IQ$ then $\tors_p\IQ=0=\IQ/p\,\IQ$ and so both dotted maps labeled~$R$ in~\eqref{eq:snake} are isomorphisms.
If $\pi_0\spec{A}\cong\IF_p$ then $\tors_p\IF_p=\IF_p=\IF_p/p\,\IF_p$ and the map~$\lambda$ in~\eqref{eq:snake} is the zero map.
Moreover, from the explicit formulas above one easily sees that
\[
\ker\bigl(
\MOR{F-R}{\wh_0^{\tr^n\!}(C)}{\wh_0^{\tr^{n-1}\!}(C)}
\bigr)
=
\ker\bigl(
\MOR{R}  {\wh_0^{\tr^n\!}(C)}{\wh_0^{\tr^{n-1}\!}(C)}
\bigr)
\]
when~$\pi_0\spec{A}\cong\IF_p$.
This implies that $R$ restricts to the zero map on~$\ker(F-R)$, the connecting map~$\partial$ in~\eqref{eq:snake} is an isomorphism, and so also
\[
\MOR{R}{\wh_{-1}^{\tc^n\!}(C)}{\wh_{-1}^{\tc^{n-1}\!}(C)}
\]
is an isomorphism.
These observations show that \autoref{lim-sum} below does not apply when $\pi_0\spec{A}$ is isomorphic to $\IQ$ or~$\IF_p$, and therefore in these cases our arguments do not produce counterexamples to surjectivity.
\end{example}

We are now ready to finish the proof of \autoref{not-surj-TR-TC}.

\begin{proof}[Proof of \autoref{not-surj-TR-TC}]
Assume that $G$ is either finitely generated free abelian or torsion-free hyperbolic, but not cyclic (and so $\VCyc=\Cyc$).
Consider the following commutative diagram.
\begin{equation}
\label{eq:asbl-Wh}
\begin{tikzcd}[column sep=-1em]
\ds EG(\Cyc)_+\sma_{\Or G}\Wh^{\TC}
\arrow[rr, "\asbl"]
\arrow[d, "\ts\one"']
&&
\ds\Wh^{\TC}(G)
\arrow[d, "\ts\two"']
\\
\ds EG(\Cyc)_+\sma_{\Or G}\holim_{n\in\IN}\Wh^{\TC^n}
\arrow[rr, "\asbl"]
\arrow[dr, "t"']
&&
\ds\holim_{n\in\IN}\Wh^{\TC^n\!}(G)
\\
&
\ds\holim_{n\in\IN}\Bigl(EG(\Cyc)_+\sma_{\Or G}\Wh^{\TC^n}\Bigr)
\arrow[ur, "\holim(\asbl)"' pos=.7, "\ts\three\!\!\!"]
\end{tikzcd}
\end{equation}
\autoref{Wh-holim} implies that the map~$\one$ is a $\pi_*$-isomorphism, and that the same is true for $\two$ under the assumptions on~$G$.
By \autoref{Wh-trick} and \autoref{pro-iso-TC}, the map~$\three$ is a $\pi_*$-isomorphism, too.

Recall the following two well-known facts.
First, given any sequence of spectra \(\dotsb\TO\T^{n+1}\TO\T^{n}\TO\dotsb\TO\T^2\TO\T^1\), for each~$q\in\IZ$ there is a natural short exact sequence
\[
0
\TO
\limone_{n\in\IN}\pi_{q+1}\T^n
\TO
\pi_q\holim_{n\in\IN}\T^n
\TO
\lim_{n\in\IN}\pi_q\T^n
\TO
0
\,;
\]
e.g., see~\cite{Bousfield-Kan}*{Theorem~IX.3.1, page~254}.
Second, if $\MOR{\T}{\Groupoids}{\IN\Sp}$ is a functor such that $\pi_q\T(C)=0$ for each $q<\ell$ and each $C\in\CF$, then there is a natural isomorphism
\[
\pi_\ell\Bigl(EG(\CF)_+\sma_{\Or G}\T\Bigr)
\cong
\colim_{C\in\obj\Or G(\CF)}\pi_\ell\T(C)
\,;
\]
e.g., see~\cite{RV-survey}*{Proof of Proposition~18, $\two$ to~$\five$, page~11}.
These assumptions are satisfied
for $\T=\Wh^{\TR^n},\Wh^\TR$ with~$\ell=0$,
and
for $\T=\Wh^{\TC^n},\Wh^\TC$ with~$\ell=-1$,
with respect to any family~$\CF$.

Now choose a complete set of representatives~$\CM$ of the conjugacy classes of maximal cyclic subgroups of~$G$.
The assumptions on~$G$ imply that any nontrivial cyclic subgroup is subconjugate to a unique $C\in\CM$, that for each~$C\in\CM$ the quotient $N_GC/Z_GC$ is trivial, and that $\CM$ is infinite.
This is obvious if $G$ is free abelian of rank at least~$2$; for hyperbolic groups, see \autoref{hyperbolic}.
Moreover, if~$C=1$ then $\wh_q^\tc(1)=0$.
So we get an isomorphism
\[
\colim_{C\in\obj\Or G(\Cyc)}\wh_q^\tc(C)
\cong
\bigoplus_{C\in\CM}         \wh_q^\tc(C)
\,.
\]

Putting these facts together with diagram~\eqref{eq:asbl-Wh}, we obtain the following commutative diagram with exact columns.
\begin{equation}
\label{eq:lim-sum}
\begin{tikzcd}[row sep=scriptsize, column sep=huge]
0
\arrow[d]
&
0
\arrow[d]
\\
\ds\smash{\bigoplus_{C\in\CM}}\limone_{n\in\IN}\!\!\wh_0^{\tc^n\!}(C)
\arrow[d]
\arrow[r]
&
\ds\limone_{n\in\IN}\!\wh_0^{\tc^n\!}(G)
\arrow[d]
\\
\ds\smash{\bigoplus_{C\in\CM}}\wh_{-1}^{\tc}(C)
\arrow[d]
\arrow[r, "\pi_{-1}\bigl(\asbl^{\Wh^{\TC}}\bigr)", "\ts\four"']
&
\wh_{-1}^{\tc}(G)
\arrow[d]
\\
\ds\smash{\bigoplus_{C\in\CM}}\limone[\phantom{1}]_{n\in\IN}\!\!\wh_{-1}^{\tc^n\!}(C)
\arrow[d]
\arrow[r, "\ts\five"']
&
\ds\lim_{n\in\IN}\smash{\bigoplus_{C\in\CM}}\wh_{-1}^{\tc^n\!}(C)\
\arrow[d]
\\
0
&
0
\end{tikzcd}
\end{equation}
By \autoref{Wh-trick}, we need to show that $\four$ is not surjective.
It is clearly enough to show that $\five$ is not surjective.

\autoref{keylemma}\ref{i:ker(R)-Wh-TC} states that the maps $\MOR{R}{\wh_{-1}^{\tc^{n\!}}(C)}{\wh_{-1}^{\tc^{n-1\!}}(C)}$ are surjective but not injective.
This implies that $\five$ is not surjective by \autoref{lim-sum} below, whose verification is an easy exercise, thus finishing the proof for~$\tc$.

For $\tr$ we proceed in the same way and obtain a diagram analogous to~\eqref{eq:lim-sum} with $\wh_0^{\tr^n}$ instead of~$\wh_{-1}^{\tc^n}$ and $\wh_1^{\tr^n}$ instead of~$\wh_0^{\tc^n}$.
We then invoke \autoref{keylemma}\ref{i:ker(R)-Wh-TR} (which is much easier to prove than~\ref{keylemma}\ref{i:ker(R)-Wh-TC}) to finish the proof.
\end{proof}

\begin{lemma}
\label{lim-sum}
Let
\begin{equation}
\label{eq:seq-ab-groups}
\dotsb
\TO
A_3
\TO
A_2
\TO
A_1
\TO
A_0
\end{equation}
be a sequence of abelian groups, and consider the direct sum of countably infinitely many copies of~\eqref{eq:seq-ab-groups}.
If each map $A_n\TO A_{n-1}$ is surjective but not injective, then the natural map
\[
\bigoplus^\infty\lim_{n\in\IN} A_n
\TO
\lim_{n\in\IN}\bigoplus^\infty A_n
\]
is not surjective.
\hfill\qedsymbol
\end{lemma}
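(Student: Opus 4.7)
The plan is to produce an explicit element of $\lim_{n\in\IN}\bigoplus^\infty A_n$ that does not lie in the image of the natural map. Observe first that the image consists precisely of those coherent sequences $(b^{(n)})_n$ whose finite supports are contained in a \emph{single} subset $I\subset\IN$ independent of~$n$. Hence the strategy is to construct a coherent sequence whose supports grow unboundedly with~$n$, thereby escaping any such~$I$.

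For each $n\ge1$, using the non-injectivity of $A_n\to A_{n-1}$, I would fix a nonzero element $c_n\in\ker(A_n\to A_{n-1})$. Indexing the countable family of copies by $k\ge1$, I would then define elements $b^{(n)}_k\in A_n$ as follows: set $b^{(n)}_k=0$ whenever $k>n$; set $b^{(n)}_n=c_n$; and for $k<n$, inductively choose $b^{(n)}_k$ to be any preimage of $b^{(n-1)}_k$ under the surjection $A_n\twoheadrightarrow A_{n-1}$, which exists by the surjectivity assumption. Each $b^{(n)}=(b^{(n)}_k)_k$ then has finite support contained in $\{1,\dots,n\}$, and by construction $A_n\to A_{n-1}$ sends $b^{(n)}$ to $b^{(n-1)}$. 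So $(b^{(n)})_n$ defines an element of $\lim_{n\in\IN}\bigoplus^\infty A_n$.

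To conclude, I would argue by contradiction that $(b^{(n)})_n$ is not in the image of the natural map. Any preimage $(y_k)_k$ would have finite support $I\subset\{1,2,\dots\}$, which forces $b^{(n)}_k=0$ for every $k\notin I$ and every $n$. But choosing any $k\notin I$ with $k>\max I$, we have $b^{(k)}_k=c_k\ne0$, a contradiction.

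There is essentially no conceptual obstacle: the argument is a diagonal construction that exploits non-injectivity at each level to plant a new nonzero coordinate on the diagonal, and surjectivity to lift everything into a compatible sequence. The only point that requires care is the verification that each $b^{(n)}$ has finite support in~$k$, which holds by construction, and that $\lim_n \bigoplus^\infty A_n$ really contains an element escaping every finite index set, which follows because non-injectivity is assumed at every level.
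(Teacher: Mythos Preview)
Your proof is correct: the diagonal construction planting a fresh nonzero kernel element $c_n$ in the $n$-th coordinate at level~$n$, together with surjectivity to lift lower coordinates, produces a compatible system whose supports are unbounded and hence not in the image. The paper itself gives no proof of this lemma---it is stated with a \qedsymbol{} and described just before as ``an easy exercise''---so there is nothing to compare; your argument is precisely the standard one intended.
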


\begin{remark}
\label{more-counterexamples}
The proof given above that $\pi_0(\asbl^\TR)$ is not surjective works for any \cplus{} symmetric ring spectrum~$\spec{A}$ whose homotopy groups are $\IZ_{(p)}$-modules and $\pi_0\spec{A}\neq0$ (under the stated assumptions on the group~$G$, of course).
The proof that $\pi_{-1}(\asbl^\TC)$ is not surjective works if moreover $\pi_0\spec{A}\cong\IZ_{(p)}$, or more generally if $\pi_0\spec{A}$ is a nontrivial free $\IZ_{(p)}$-module.

Moreover, we use the assumptions on~$G$ only to ensure that:
\begin{enumerate}[label=(\alph*)]
\item
there exists a compact~$BG$ (and hence $G$ is torsionfree and $\VCyc=\Cyc$);
\item
condition $\MNM{}{\{1\}}{\Cyc}$ holds (see \autoref{NM});
\item
for each maximal cyclic subgroup~$C\le G$ we have $N_GC/Z_GC=1$;
\item
there are infinitely many conjugacy classes of maximal cyclic subgroups.
\end{enumerate}
These four conditions on~$G$ are the only ones needed in the proof.
\end{remark}


\section{Example of an explicit computation}

In this section we prove \autoref{TC(A[S3];p)}, as well as a variant of it (\autoref{TC(A[S3];odd)}) that holds for odd primes~$p$.
\autoref{TC(A[S3];p)} is a direct consequence of \autoref{iso-TC-finite-groups} and \autoref{ES3(CYC)-smash-T} below.
We begin by fixing some notation.

We choose a cyclic subgroup of~$\Sym_3$ of order~$2$ and denote it~$C_2$, and we write~$C_3$ for~$A_3$.
We let $\MOR{i}{C_2}{\Sym_3}$ and $\MOR{p}{\Sym_3}{\Sym_3/C_3}$ be the inclusion and the projection homomorphisms.
The composition
\[
C_2\TO[i]\Sym_3\TO[p]\Sym_3/C_3
\]
is an isomorphism, and we define $\MOR{q=(pi)^{-1}p}{\Sym_3}{C_2}$.
There is a natural transformation from induction along~$i$ to restriction along~$q$:
\[
\begin{tikzcd}
\Top^{C_2}
\arrow[d, bend right, "i_*"' pos=.58]
\arrow[d, phantom,    "\stackrel{\tau}{\Rightarrow}"]
\arrow[d, bend left,  "q^*"  pos=.58]
\\
\Top^{\Sym_3}
\end{tikzcd}
\hspace{10em}
\begin{tikzcd}[column sep=scriptsize, row sep=-1ex]
\mathllap{\ds i_*X=\Sym_3\timesd_{C_2}X}
\arrow[r, "\tau_X"]
&
q^*X
\mathrlap{\,,}
\\
\mathllap{[g,x]}
\arrow[r, mapsto]
&
q(g)x
\mathrlap{\,.}
\end{tikzcd}
\]
Let \(\MOR{\pr}{EC_2}{\pt}\) be the projection.

\begin{lemma}
\label{ES3(CYC)}
There exists a $\Sym_3$-equivariant homotopy cocartesian square
\[
\begin{tikzcd}
i_*EC_2
\arrow[r, "\tau_{EC_2}"]
\arrow[d, "i_*\pr"']
&
q^*EC_2
\arrow[d]
\\
i_*\pt
\arrow[r]
&
E\Sym_3(\Cyc)
\mathrlap{\,.}
\end{tikzcd}
\]
\end{lemma}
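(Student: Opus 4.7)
The plan is to verify that the pushout~$P$ defined by the square satisfies the characterizing property of~$E\Sym_3(\Cyc)$: that $P^H$ is contractible for every cyclic subgroup $H\le\Sym_3$ and empty when $H=\Sym_3$ (the only non-cyclic subgroup).

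First I would check that $\tau_{EC_2}$ is a $\Sym_3$-equivariant cofibration. Choosing a free $C_2$-CW model of~$EC_2$ (for instance $S^\infty$ with the antipodal action), both $i_*EC_2$ and $q^*EC_2$ acquire natural $\Sym_3$-CW structures and $\tau_{EC_2}$ is cellular. Consequently the square is a homotopy pushout, and for every subgroup $H\le\Sym_3$ taking $H$-fixed points produces a homotopy pushout square.

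The conjugacy classes of subgroups of~$\Sym_3$ are represented by~$1$, $C_2$, $C_3$, and~$\Sym_3$, of which only the last is not cyclic. Using the identification $(q^*X)^H=X^{q(H)}$ together with the standard description of fixed points of induced spaces, I would argue case by case. For $H=1$ the top row reads $3\cdot EC_2\to EC_2$, a homeomorphism on each component (on the coset $g_0C_2$ the map $\tau$ sends $x\mapsto q(g_0)\cdot x$), while the left map collapses each component to a point; replacing each contractible piece by a point, the homotopy pushout reduces to that of $3\cdot\pt\leftarrow 3\cdot\pt\to\pt$ with identity and fold, which is a tripod and hence contractible. For $H$ conjugate to~$C_2$, both top corners vanish, because $EC_2$ is $C_2$-free and $q|_{C_2}$ is an isomorphism, while $(i_*\pt)^H$ consists of a single coset, so $P^H\simeq\pt$. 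For $H=C_3$, the $C_3$-action on~$\Sym_3/C_2$ is free and transitive (the subgroups $C_3$ and $gC_2g^{-1}$ have coprime orders), hence both $(i_*EC_2)^{C_3}$ and $(i_*\pt)^{C_3}$ are empty; since $C_3=\ker q$, the corner $(q^*EC_2)^{C_3}=EC_2$ is contractible, and so $P^{C_3}\simeq\pt$. Finally for $H=\Sym_3$ all three corners are empty (each contains the corresponding $C_3$-fixed point set), and $P^{\Sym_3}=\emptyset$.

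The main potential obstacle is the commutation of $H$-fixed points with the homotopy pushout, but once the cofibration of $\tau_{EC_2}$ is in place this is standard. Everything else is a bookkeeping exercise in the subgroup lattice of~$\Sym_3$, organized by the four conjugacy classes above.
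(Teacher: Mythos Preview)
Your overall strategy matches the paper's: form the homotopy pushout~$P$, check that $(-)^H$ commutes with the construction, and then verify the characterizing property of $E\Sym_3(\Cyc)$ by computing fixed points for $H\in\{1,C_2,C_3,\Sym_3\}$. Your fixed-point computations for $C_2$, $C_3$, and $\Sym_3$ agree with the paper's.

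There is, however, a genuine slip in your setup. The map $\tau_{EC_2}\colon i_*EC_2\to q^*EC_2$ is \emph{not} a $\Sym_3$-cofibration: non-equivariantly it is the fold map $3\cdot EC_2\to EC_2$, which is three-to-one and hence not injective. Being cellular between CW complexes does not imply being a cofibration; only relative CW \emph{inclusions} are automatically cofibrations. Consequently the ordinary pushout of the square need not compute the homotopy pushout, and your justification for ``the square is a homotopy pushout, and for every subgroup $H$ taking $H$-fixed points produces a homotopy pushout square'' fails as stated. The fix is easy and standard: work directly with the double mapping cylinder model of the homotopy pushout. Since this is built from products with~$I$ and pushouts along closed inclusions, $(-)^H$ commutes with it, and the rest of your argument goes through unchanged. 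The paper sidesteps the issue by simply declaring $X$ to be ``the $\Sym_3$-homotopy pushout.''

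A minor stylistic remark: for $H=1$ your tripod argument is correct but roundabout. The paper observes instead that $i_*\pr$ is a non-equivariant homotopy equivalence, so the homotopy pushout is weakly equivalent to the opposite corner $q^*EC_2$, which is contractible.
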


\begin{proof}
Let $X$ be the $\Sym_3$-homotopy pushout of $i_*\pr$ and~$\tau_{EC_2}$.
It suffices to show that $X$, $X^{C_2}$, and~$X^{C_3}$ are contractible, and that $X^{\Sym_3}$ is empty.
Since $i_*\pr$ is a non-equivariant homotopy equivalence and $q^*EC_2$ is contractible, $X$ is also contractible.
The conditions on the fixed points are verified as follows.
\[
\biggl(
i_*\pt \FROM i_*EC_2 \TO q^*EC_2
\biggr)^H
=
\left\{
\setlength{\arraycolsep}{.15em}
\renewcommand{\arraystretch}{1.5}
\begin{array}{rcccll}
\pt&\FROM&\emptyset&\TO&\emptyset&\text{if $H=C_2$\,;}
\\
\emptyset&\FROM&\emptyset&\TO&EC_2\quad&\text{if $H=C_3$\,;}
\\
\emptyset&\FROM&\emptyset&\TO&\emptyset&\text{if $H=\Sym_3$\,.}
\end{array}
\right.
\qedhere
\]
\end{proof}

\begin{proposition}
\label{ES3(CYC)-smash-T}
Let $\MOR{\T}{\Groupoids}{\IN\Sp}$ be a functor that preserves equivalences.
Then there is a $\pi_*$-isomorphism
\[
\T(C_2)
\vee
\widetilde\T(C_3)_{hC_2}
\TO[\simeq]
E\Sym_3(\Cyc)_+\sma_{\Or\Sym_3}\T(\oid{\Sym_3}{-})
\,,
\]
where $C_2$ acts on~$C_3$ by sending the generator to its inverse, and $\widetilde\T(G)$ is the homotopy cofiber of the map
\(
\T(1)
\TO
\T(G)
\)
induced by the inclusion.
\end{proposition}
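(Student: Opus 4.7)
The plan is to apply the coend functor $X \mapsto X_+\sma_{\Or\Sym_3}\T(\oid{\Sym_3}{-})$ to the $\Sym_3$-equivariant homotopy cocartesian square of $\Sym_3$-CW~complexes supplied by \autoref{ES3(CYC)}. Since this functor is built as a homotopy colimit and $\T$ preserves equivalences, it sends such squares to homotopy cocartesian squares of spectra. The bottom-right corner of the resulting square is the target $E\Sym_3(\Cyc)_+\sma_{\Or\Sym_3}\T(\oid{\Sym_3}{-})$, so it suffices to identify the other three corners and the two maps emerging from the top-left corner, and then to simplify the resulting homotopy pushout.

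The bottom-left corner $(i_*\pt)_+\sma_{\Or\Sym_3}\T(\oid{\Sym_3}{-}) = (\Sym_3/C_2)_+\sma_{\Or\Sym_3}\T(\oid{\Sym_3}{-})$ is naturally $\pi_*$-isomorphic to $\T(C_2)$ since $\T$ preserves equivalences. For the top-left, $i_*EC_2 = \Sym_3\times_{C_2}EC_2$ is a free $\Sym_3$-CW~complex with quotient $BC_2$, so the standard identification for free actions gives $(i_*EC_2)_+\sma_{\Or\Sym_3}\T(\oid{\Sym_3}{-}) \simeq BC_2{}_+\sma\T(1)$. For the top-right, every point of $q^*EC_2$ has $\Sym_3$-isotropy equal to $\ker q = C_3$; since $C_3$ is normal in $\Sym_3$ with Weyl group $W_{\Sym_3}C_3 = C_2$ acting by conjugation (which is inversion on $C_3$, as $\Sym_3 = C_3\rtimes C_2$), and since the underlying $C_2$-space is~$EC_2$, one obtains $(q^*EC_2)_+\sma_{\Or\Sym_3}\T(\oid{\Sym_3}{-}) \simeq EC_2{}_+\sma_{C_2}\T(C_3) = \T(C_3)_{hC_2}$ with precisely the inversion action in the statement.

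It remains to analyze the top horizontal map and to deduce the wedge splitting. The map $\tau_{EC_2}\colon[g,x]\mapsto q(g)x$ descends to the identity on quotients $EC_2/C_2 \to EC_2/C_2 = BC_2$, so after smashing with $\T(\oid{\Sym_3}{-})$ the induced map $BC_2{}_+\sma\T(1) \to \T(C_3)_{hC_2}$ is obtained from the unit $\T(1)\to\T(C_3)$ by applying $EC_2{}_+\sma_{C_2}(-)$. Because $C_2$ fixes the identity element of~$C_3$, the natural retraction splitting $\T(C_3) \simeq \T(1)\vee\widetilde\T(C_3)$ is $C_2$-equivariant, yielding $\T(C_3)_{hC_2} \simeq BC_2{}_+\sma\T(1)\;\vee\;\widetilde\T(C_3)_{hC_2}$, with respect to which the top map is the inclusion of the first wedge summand. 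The homotopy pushout of
\[
\T(C_2) \;\longleftarrow\; BC_2{}_+\sma\T(1) \;\hookrightarrow\; BC_2{}_+\sma\T(1) \vee \widetilde\T(C_3)_{hC_2}
\]
therefore collapses to $\T(C_2) \vee \widetilde\T(C_3)_{hC_2}$, yielding the claimed $\pi_*$-isomorphism. The main point requiring care is the equivariance of the splitting of $\T(C_3)$ and the resulting wedge-summand cancellation; identifying the Weyl action with inversion on $C_3$ is the one place where the specific geometry of $\Sym_3$ enters explicitly.
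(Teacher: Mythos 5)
Your proof is correct and follows essentially the same route as the paper: apply the coend functor to the $\Sym_3$-homotopy pushout of \autoref{ES3(CYC)}, identify the three known corners (free quotient for $i_*EC_2$, isotropy $C_3$ with residual $C_2=W_{\Sym_3}C_3$ conjugation action for $q^*EC_2$, and $\T(C_2)$ for $i_*\pt$), and collapse the resulting pushout using the $C_2$-equivariant splitting of $\T(1)\to\T(C_3)$ by the projection $C_3\to1$. The paper packages the corner identifications more formally via an induction isomorphism [kc, Fact~13.5(i)] and the resulting $\Or C_2$-coend over the free $C_2$-space $EC_2$, and isolates the final pushout-simplification as \autoref{cartesian-split}, but the underlying ideas are the same as yours.
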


\begin{proof}
\autoref{ES3(CYC)} yields the following homotopy cartesian diagram.
\begin{equation}
\label{eq:ES3(CYC)-smash-T}
\begin{tikzcd}[column sep=large]
\ds{i_*EC_2}_+\sma_{\Or\Sym_3}\T(\oid{\Sym_3}{-})
\arrow[r, "\tau_{EC_2}\ssma\id"]
\arrow[d, "i_*\pr\ssma\id"']
&
\ds{q^*EC_2}_+\sma_{\Or\Sym_3}\T(\oid{\Sym_3}{-})
\arrow[d]
\\
\ds i_*\pt_+\sma_{\Or\Sym_3}\T(\oid{\Sym_3}{-})
\arrow[r]
&
\ds E\Sym_3(\Cyc)_+\sma_{\Or\Sym_3}\T(\oid{\Sym_3}{-})
\end{tikzcd}
\end{equation}
The bottom left-hand corner is isomorphic to~$\T(C_2)$.
We now identify the top horizontal map.

For any $C_2$-space~$X$ we have the following diagram.
\begin{equation}
\label{eq:tau_X-smash-id}
\begin{tikzcd}[column sep=huge]
\ds{X}_+\sma_{\Or C_2}\T(\oid{\Sym_3}{i_\diamond-})
\arrow[r, "\id\sma\T(\oid{\Sym_3}{\varsigma})"]
\arrow[d, "\cong"']
&
\ds{X}_+\sma_{\Or C_2}\T(\oid{\Sym_3}{q^\diamond-})
\arrow[d, "\cong"]
\\
\ds{i_*X}_+\sma_{\Or\Sym_3}\T(\oid{\Sym_3}{-})
\arrow[r, "\tau_{X}\ssma\id"']
&
\ds{q^*X}_+\sma_{\Or\Sym_3}\T(\oid{\Sym_3}{-})
\end{tikzcd}
\end{equation}
The vertical isomorphisms come from applying~\cite{kc}*{Fact~13.5(i), pages~994--995} to
\[
\begin{tikzcd}[column sep=small]
C_2/H
\arrow[d, "i_\diamond"', mapsto]
&
\Or C_2
\arrow[rr, hook]
\arrow[d, "i_\diamond"']
&&
\Top^{C_2}
\arrow[d, "i_*"]
&&
C_2/H
\arrow[d, "q^\diamond"', mapsto]
&
\Or C_2
\arrow[rr, hook]
\arrow[d, "q^\diamond"']
&&
\Top^{C_2}
\arrow[d, "q^*"]
\\
\Sym_3/H
&
\Or\Sym_3
\arrow[rr, hook]
\arrow[d, "\T(\oid{\Sym_3}{-})"]
\arrow[Rightarrow, rru, shorten <=1em, shorten >=1em, "\nu"]
&&
\Top^{\Sym_3}
&\ \text{and}\ &
\Sym_3/q^{-1}(H)
&
\Or\Sym_3
\arrow[rr, hook]
\arrow[d, "\T(\oid{\Sym_3}{-})"]
\arrow[Rightarrow, rru, shorten <=1em, shorten >=1em, "\upsilon"]
&&
\Top^{\Sym_3}
\\
&
\IN\Sp
&&&&&
\IN\Sp
\end{tikzcd}
\]
where $\nu$ and~$\upsilon$ are the obvious natural isomorphisms.
The natural transformation ${\tau}\colon{i_*}\Rightarrow{q^*}$ induces a compatible natural transformation ${\varsigma}\colon{i_\diamond}\Rightarrow{q^\diamond}$, and therefore it follows that \eqref{eq:tau_X-smash-id} commutes.
Finally, if $\MOR{\BF}{\Or C_2}{\IN\Sp}$ is any functor, then there are natural isomorphisms
\begin{equation}
\label{eq:free-smash-F}
{EC_2}_+\sma_{\Or C_2}\BF
\cong
{EC_2}_+\sma_{\Or C_2(1)}\BF_{|\Or C_2(1)}
\cong
\BF(C_2/1)_{hC_2}
\,.
\end{equation}
Combining \eqref{eq:ES3(CYC)-smash-T}, \eqref{eq:tau_X-smash-id}, and~\eqref{eq:free-smash-F}, we get the following homotopy cartesian square.
\begin{equation}
\label{eq:ES3(CYC)-smash-T-simplified}
\qquad
\begin{tikzcd}[column sep=large]
\ds\T(\oid{\Sym_3}{i_\diamond(C_2/1)})_{hC_2}
\arrow[r, "\T(\oid{\Sym_3}{\varsigma})_{hC_2}"]
\arrow[d]
&
\ds\T(\oid{\Sym_3}{q^\diamond(C_2/1)})_{hC_2}
\arrow[d]
\\
\ds\mathllap{\T(C_2)\simeq{}}i_*\pt_+\sma_{\Or\Sym_3}\T(\oid{\Sym_3}{-})
\arrow[r]
&
\ds E\Sym_3(\Cyc)_+\sma_{\Or\Sym_3}\T(\oid{\Sym_3}{-})
\end{tikzcd}
\end{equation}
The diagram of groupoids
\[
\hspace{6.33em}\begin{tikzcd}
\mathllap{\oid{\Sym_3}{i_\diamond(C_2/1)}={}}
\oid{\Sym_3}{\,(\Sym_3/1)}
\arrow[r, "\oid{\Sym_3}{\varsigma}"]
\arrow[d]
&
\oid{\Sym_3}{\,(\Sym_3/C_3)}
\mathrlap{{}=\oid{\Sym_3}{q^\diamond(C_2/1)}}
\arrow[d, "\sigma"]
&&&
{xC_3}\TO[g]{gxC_3}
\arrow[d, "\sigma", mapsto, shift right]
\\
1
\arrow[r, hook, "\incl"]
&
C_3
&&&
\!s(gxC_3)^{-1} gs(xC_3)
\end{tikzcd}
\hspace{-.25em}
\]
commutes, where $\MOR{s=i(pi)^{-1}}{\Sym_3/C_3}{\Sym_3}$.
The group~$C_2$ acts on~$C_2/1$ by right multiplication, and on~$C_3$ by conjugation.
With respect to these actions, the diagram of groupoids above is $C_2$-equivariant, and the vertical maps are non-equivariant equivalences.
Since $\T$ preserves equivalences, we obtain from~\eqref{eq:ES3(CYC)-smash-T-simplified} a homotopy cartesian square
\[
\begin{tikzcd}
\ds\T(1)_{hC_2}
\arrow[r, "\T(\incl)_{hC_2}"]
\arrow[d]
&
\ds\T(C_3)_{hC_2}
\arrow[d]
\\
\T(C_2)
\arrow[r]
&
\ds E\Sym_3(\Cyc)_+\sma_{\Or\Sym_3}\T(\oid{\Sym_3}{-})
\end{tikzcd}
\]
whose top horizontal map is evidently split by the projection~$C_3\TO1$.
Since homotopy orbits commute with homotopy cofibers, the proof of \autoref{ES3(CYC)-smash-T} is then completed by applying \autoref{cartesian-split} below.
\end{proof}

\begin{lemma}
\label{cartesian-split}
Assume that the commutative square of spectra
\[
\begin{tikzcd}
\BW
\arrow[r, "\alpha"]
\arrow[d]
&
\BX
\arrow[d]
\\
\BY
\arrow[r]
&
\BZ
\end{tikzcd}
\]
is homotopy cartesian and that $\alpha$ is split injective, i.e., there exists a $\MOR{\beta}{\BX}{\BW'}$ such that $\beta\alpha$ is a $\pi_*$-isomorphism.
Then there is a $\pi_*$-isomorphism
\[
\BY\vee\hocofib(\alpha)
\TO[\simeq]
\BZ
\,.
\]
\end{lemma}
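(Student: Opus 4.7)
The plan is to exploit the fact that for spectra, a homotopy cartesian square is automatically homotopy cocartesian, so that both vertical maps and both horizontal maps have equivalent cofibers. Concretely, the square gives a $\pi_*$-isomorphism $\hocofib(\alpha) \TO[\simeq] \hocofib(\MOR{}{\BY}{\BZ})$, hence a cofiber sequence $\BY \TO \BZ \TO \hocofib(\alpha)$ (up to this identification), with structure map $\BZ \TO \hocofib(\alpha)$ induced by $\MOR{q}{\BX}{\hocofib(\alpha)}\circ(\MOR{}{\BX}{\BZ})$.

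Next, I would manufacture a section up to $\pi_*$-isomorphism of $\BZ\TO\hocofib(\alpha)$ from the splitting $\beta$. Since $\beta\alpha$ is a $\pi_*$-iso, after replacing $\beta$ with $(\beta\alpha)^{-1}\circ\beta$ in the stable homotopy category we may assume we have a genuine retraction $\MOR{\rho}{\BX}{\BW}$ with $\rho\alpha=\id_\BW$. Then the endomorphism $\id_\BX-\alpha\rho$ satisfies $(\id_\BX-\alpha\rho)\alpha=0$, so by the universal property of the homotopy cofiber it factors as $\sigma\circ q$ for some $\MOR{\sigma}{\hocofib(\alpha)}{\BX}$. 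A direct computation on $\pi_*$ shows $q\sigma=\id_{\hocofib(\alpha)}$ (since $\pi_*q$ is surjective and $q\sigma q = q - q\alpha\rho = q$). Composing with the map $\BX\TO\BZ$ gives a map $\MOR{\widetilde\sigma}{\hocofib(\alpha)}{\BZ}$ that is a section (up to $\pi_*$-iso) of $\BZ\TO\hocofib(\alpha)$, because the latter agrees with $q$ after precomposition with $\BX\TO\BZ$ under the identification $\hocofib(\alpha)\simeq\hocofib(\MOR{}{\BY}{\BZ})$.

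Finally, I would assemble the map
\[
\MOR{\varphi}{\BY\vee\hocofib(\alpha)}{\BZ}
\]
as the wedge of the bottom arrow $\BY\TO\BZ$ with $\widetilde\sigma$. It fits into a morphism of cofiber sequences
\[
\begin{tikzcd}
\BY \arrow[r] \arrow[d, equal] & \BY\vee\hocofib(\alpha) \arrow[r] \arrow[d, "\varphi"] & \hocofib(\alpha) \arrow[d, "\simeq"] \\
\BY \arrow[r] & \BZ \arrow[r] & \hocofib(\alpha)
\end{tikzcd}
\]
whose left and right vertical maps are $\pi_*$-isomorphisms, so $\varphi$ is too by the five lemma applied to the long exact sequences in homotopy.

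The only subtlety is the passage from the hypothesis that $\beta\alpha$ is merely a $\pi_*$-iso to having a bona fide retraction of $\alpha$; this is harmless because all constructions can be performed in the stable homotopy category, where $\beta\alpha$ becomes invertible. Beyond that, everything is a routine diagram chase.
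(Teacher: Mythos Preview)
Your argument is correct. You use that a homotopy cartesian square of spectra is also cocartesian, identify $\hocofib(\alpha)\simeq\hocofib(\BY\to\BZ)$, and then explicitly split the resulting cofiber sequence $\BY\to\BZ\to\hocofib(\alpha)$ by manufacturing a section from the idempotent $\id_\BX-\alpha\rho$; the five lemma finishes it. (One small wrinkle: from $q\sigma q=q$ and surjectivity of $\pi_*(q)$ you only get that $q\sigma$ is a $\pi_*$-automorphism, not literally the identity; but that is all you need, and you could replace $\sigma$ by $\sigma(q\sigma)^{-1}$ if you want an honest section.)

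The paper takes a different, more geometric route. It places the given square as the front face of a cube whose back face is
\[
\begin{tikzcd}
\BW \arrow[r, "\beta\alpha"] \arrow[d] & \BW' \arrow[d] \\ \pt \arrow[r] & \pt
\end{tikzcd}
\]
with diagonal maps $\id_\BW$, $\beta$, and the two projections to $\pt$. Both front and back faces are homotopy cartesian, so the square of homotopy fibers of the diagonal maps is again homotopy cartesian; that square is $\pt\to\hofib(\beta)$, $\pt\to\BY$, with pushout corner~$\BZ$, giving $\BZ\simeq\BY\times\hofib(\beta)$. The identification $\hofib(\beta)\simeq\hocofib(\alpha)$ (immediate once $\beta\alpha$ is inverted) completes the proof. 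Your approach is more hands-on with the triangulated structure and avoids the cube; the paper's is a one-diagram argument but trades the explicit section for the fiber identification. Either is perfectly adequate for this lemma.
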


\begin{proof}
Consider the following commutative diagram.
\[
\begin{tikzcd}[sep=small]
&
\BW
\arrow[rr, "\beta\alpha"]
\arrow[dd]
&
&
\BW'
\arrow[dd]
\\
\BW
\arrow[rr, crossing over, "\alpha" pos=.65]
\arrow[dd]
\arrow[ur, "\id"]
&
&
\BX
\arrow[ur, "\beta\!\!\!" pos=.25]
\\
&
\pt
\arrow[rr]
&
&
\pt
\\
\BY
\arrow[rr]
\arrow[ur]
&
&
\BZ
\arrow[from=uu, crossing over]
\arrow[ur]
\end{tikzcd}
\]
The front and the back squares are homotopy cartesian.
By taking homotopy fibers of the diagonal maps, and using the $\pi_*$-isomorphism $\hofib(\beta)\simeq\hocofib(\alpha)$, the result follows.
\end{proof}


We conclude with a variant of \autoref{TC(A[S3];p)} that holds only for odd primes~$p$.
First notice that, by the natural induction $\pi_*$-isomorphisms
\[
   X_+\sma_{\Or C_2}\T(\oid{C_2}{-})
\TO[\simeq]
i_*X_+\sma_{\Or\Sym_3}\T(\oid{\Sym_3}{-})
\]
(compare~\cite{kc}*{Theorem~12.2, page~988}),
the left vertical map in~\eqref{eq:ES3(CYC)-smash-T} is $\pi_*$-isomorphic to the classical assembly map for~$\T$
\[
{BC_2}_+\sma\T(1)
\cong
{EC_2}_+\sma_{\Or C_2}\T(\oid{C_2}{-})
\xrightarrow{\pr\ssma\id}
   \pt_+\sma_{\Or C_2}\T(\oid{C_2}{-})
\cong
\T(C_2)
\,.
\]
If $p\neq2$ then the trivial family for~$C_2$ is $p$-radicable, and therefore the classical assembly map for~$\TC(\spec{A}[C_2];p)$ is split injective by \autoref{inj-TC-technical}\ref{i:split-inj-TC-technical}.
Therefore, in this case, we may equally well apply \autoref{cartesian-split} to the left-hand vertical map in~\eqref{eq:ES3(CYC)-smash-T}, instead of to the top horizontal map as in the proof of \autoref{ES3(CYC)-smash-T}.
This proves the following result.

\begin{proposition}
\label{TC(A[S3];odd)}
If $p$ is an odd prime, then there is a $\pi_*$-isomorphism
\[
\Wh^\TC(C_2)
\vee
\TC(\spec{A}[C_3];p)_{hC_2}
\TO[\simeq]
\TC(\spec{A}[\Sym_3];p)
\,,
\]
where $\Wh^\TC(C_2)$ is the homotopy cofiber of the classical assembly map
\[
{BC_2}_+\sma\TC(\spec{A};p)
\TO
\TC(\spec{A}[C_2];p)
\,.
\]
\end{proposition}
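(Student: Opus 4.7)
The plan is to revisit the homotopy cartesian square~\eqref{eq:ES3(CYC)-smash-T-simplified} furnished by \autoref{ES3(CYC)-smash-T} with $\T=\TC(\spec{A}[-];p)$, and apply \autoref{cartesian-split} to its \emph{left vertical} map rather than to the top horizontal one, as was done for the proof of \autoref{TC(A[S3];p)}. Together with the identification of the bottom-right corner with $\TC(\spec{A}[\Sym_3];p)$ via \autoref{iso-TC-finite-groups}, this will give the desired splitting provided that the left vertical map is split injective with homotopy cofiber $\Wh^\TC(C_2)$.

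First I would identify this left vertical map. Using the induction $\pi_*$-isomorphism $X_+\sma_{\Or C_2}\T(\oid{C_2}{-})\TO[\simeq]i_*X_+\sma_{\Or\Sym_3}\T(\oid{\Sym_3}{-})$ of~\cite{kc}*{Theorem~12.2, page~988} applied with $X=EC_2$ and with~$X=\pt$, the left column of~\eqref{eq:ES3(CYC)-smash-T} becomes $\pi_*$-isomorphic to the classical assembly map $BC_{2+}\sma\TC(\spec{A};p)\TO\TC(\spec{A}[C_2];p)$, whose homotopy cofiber is by definition $\Wh^\TC(C_2)$. This step does not use the hypothesis on~$p$.

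Next, to apply \autoref{cartesian-split} I need the classical assembly map above to be split injective, and this is the single place where the hypothesis that $p$ is odd enters: the trivial family~$\{1\}$ inside~$C_2$ is $p$-radicable if and only if $C_2$ contains no elements of order~$p$, i.e., if and only if $p\neq2$. Since $C_2$ is finite it admits a finite type model for~$EC_2$, so \autoref{inj-TC-technical}\ref{i:split-inj-TC-technical}---whose conclusion for the family $\CF=\{1\}$ recovers exactly split injectivity of the classical assembly map---delivers what is needed.

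Applying \autoref{cartesian-split} after transposing the square so that the left vertical map plays the role of~$\alpha$ then yields a $\pi_*$-isomorphism
\[
\TC(\spec{A}[C_3];p)_{hC_2}\vee\Wh^\TC(C_2)\TO[\simeq]\TC(\spec{A}[\Sym_3];p)\,,
\]
which is the claim. The main conceptual input beyond \autoref{ES3(CYC)-smash-T} is the identification of the left column as a classical assembly map together with its $p$-odd split injectivity; the remaining manipulations of the cartesian square are formal, and there is no real obstacle.
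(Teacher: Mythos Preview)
Your proposal is correct and follows essentially the same route as the paper: identify the left vertical map of the homotopy cartesian square with the classical assembly map for~$C_2$ via the induction isomorphism of~\cite{kc}*{Theorem~12.2}, invoke \autoref{inj-TC-technical}\ref{i:split-inj-TC-technical} for the trivial family (which is $p$-radicable precisely when $p$ is odd) to obtain split injectivity, and then apply \autoref{cartesian-split} along that map instead of along the top horizontal one. The only cosmetic difference is that the paper phrases everything in terms of diagram~\eqref{eq:ES3(CYC)-smash-T} rather than~\eqref{eq:ES3(CYC)-smash-T-simplified}, but the two are interchangeable here.
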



\begin{bibdiv}
\begin{biblist}

\bib{Angeltveit}{article}{
   author={Angeltveit, Vigleik},
   title={On the $K$-theory of truncated polynomial rings in non-commuting variables},
   journal={Bull. Lond. Math. Soc.},
   volume={47},
   date={2015},
   number={5},
   pages={731--742},
   issn={0024-6093},
   review={\MR{3403957}},
   doi={\DOI{10.1112/blms/bdv049}},
}

\bib{AGHL}{article}{
   author={Angeltveit, Vigleik},
   author={Gerhardt, Teena},
   author={Hill, Michael A.},
   author={Lindenstrauss, Ayelet},
   title={On the algebraic $K$-theory of truncated polynomial algebras in several variables},
   journal={J. K-Theory},
   volume={13},
   date={2014},
   number={1},
   pages={57--81},
   issn={1865-2433},
   review={\MR{3177818}},
   doi={\DOI{10.1017/is013010011jkt243}},
}

\bib{AR-top}{article}{
   author={Ausoni, Christian},
   author={Rognes, John},
   title={Algebraic $K$-theory of topological $K$-theory},
   journal={Acta Math.},
   volume={188},
   date={2002},
   number={1},
   pages={1--39},
   issn={0001-5962},
   review={\MR{1947457}},
   doi={\DOI{10.1007/BF02392794}},
}

\bib{AR-Morava}{article}{
   author={Ausoni, Christian},
   author={Rognes, John},
   title={Algebraic $K$-theory of the first Morava $K$-theory},
   journal={J. Eur. Math. Soc. (JEMS)},
   volume={14},
   date={2012},
   number={4},
   pages={1041--1079},
   issn={1435-9855},
   review={\MR{2928844}},
   doi={\DOI{10.4171/JEMS/326}},
}

\bib{BMS}{article}{
   author={Bhatt, Bhargav},
   author={Morrow, Matthew},
   author={Scholze, Peter},
   title={Integral $p$-adic Hodge theory},
   date={2016},
   status={preprint, available at \arXiv{1602.03148}},
}

\bib{BGT}{article}{
   author={Blumberg, Andrew J.},
   author={Gepner, David},
   author={Tabuada, Gonçalo},
   title={Uniqueness of the multiplicative cyclotomic trace},
   journal={Adv. Math.},
   volume={260},
   date={2014},
   pages={191--232},
   issn={0001-8708},
   review={\MR{3209352}},
   doi={\DOI{10.1016/j.aim.2014.02.004}},
}

\bib{BM-loc}{article}{
   author={Blumberg, Andrew J.},
   author={Mandell, Michael A.},
   title={Localization theorems in topological Hochschild homology and topological cyclic homology},
   journal={Geom. Topol.},
   volume={16},
   date={2012},
   number={2},
   pages={1053--1120},
   issn={1465-3060},
   review={\MR{2928988}},
   doi={\DOI{10.2140/gt.2012.16.1053}},
}

\bib{BM-theory}{article}{
   author={Blumberg, Andrew J.},
   author={Mandell, Michael A.},
   title={The homotopy theory of cyclotomic spectra},
   journal={Geom. Topol.},
   volume={19},
   date={2015},
   number={6},
   pages={3105--3147},
   issn={1465-3060},
   review={\MR{3447100}},
   doi={\DOI{10.2140/gt.2015.19.3105}},
}

\bib{BHM}{article}{
   author={Bökstedt, Marcel},
   author={Hsiang, Wu Chung},
   author={Madsen, Ib},
   title={The cyclotomic trace and algebraic $K$-theory of spaces},
   journal={Invent. Math.},
   volume={111},
   date={1993},
   number={3},
   pages={465--539},
   issn={0020-9910},
   review={\MR{1202133}},
   doi={\DOI{10.1007/BF01231296}},
}

\bib{Bousfield-Kan}{book}{
   author={Bousfield, Aldridge K.},
   author={Kan, Daniel M.},
   title={Homotopy limits, completions and localizations},
   series={Lecture Notes in Math.},
   volume={304},
   publisher={Springer},
   place={Berlin-New York},
   date={1972},
   pages={v+348},
   review={\MR{0365573}},
   note={DOI \DOI{10.1007/978-3-540-38117-4}},
}

\bib{Davis-L}{article}{
   author={Davis, James F.},
   author={Lück, Wolfgang},
   title={Spaces over a category and assembly maps in isomorphism conjectures in $K$- and $L$-theory},
   journal={$K$-Theory},
   volume={15},
   date={1998},
   number={3},
   pages={201--252},
   issn={0920-3036},
   review={\MR{1659969}},
   doi={\DOI{10.1023/A:1007784106877}},
}

\bib{Dundas-rel}{article}{
   author={Dundas, Bjørn Ian},
   title={Relative $K$-theory and topological cyclic homology},
   journal={Acta Math.},
   volume={179},
   date={1997},
   number={2},
   pages={223--242},
   issn={0001-5962},
   review={\MR{1607556}},
   doi={\DOI{10.1007/BF02392744}},
}

\bib{Dundas}{book}{
   author={Dundas, Bjørn Ian},
   author={Goodwillie, Thomas G.},
   author={McCarthy, Randy},
   title={The local structure of algebraic K-theory},
   series={Algebra and Applications},
   volume={18},
   publisher={Springer},
   place={London},
   date={2013},
   pages={xvi+435},
   isbn={978-1-4471-4392-5},
   isbn={978-1-4471-4393-2},
   review={\MR{3013261}},
   note={DOI \DOI{10.1007/978-1-4471-4393-2}},
}

\bib{FJ-iso}{article}{
   author={Farrell, F. Thomas},
   author={Jones, Lowell E.},
   title={Isomorphism conjectures in algebraic $K$-theory},
   journal={J. Amer. Math. Soc.},
   volume={6},
   date={1993},
   number={2},
   pages={249--297},
   issn={0894-0347},
   review={\MR{1179537}},
   doi={\DOI{10.2307/2152801}},
}

\bib{GH}{article}{
   author={Geisser, Thomas},
   author={Hesselholt, Lars},
   title={Bi-relative algebraic $K$-theory and topological cyclic homology},
   journal={Invent. Math.},
   volume={166},
   date={2006},
   number={2},
   pages={359--395},
   issn={0020-9910},
   review={\MR{2249803}},
   doi={\DOI{10.1007/s00222-006-0515-y}},
}

\bib{GV}{article}{
   author={Geoghegan, Ross},
   author={Varisco, Marco},
   title={On Thompson's group $T$ and algebraic $K$-theory},
   date={2015},
   status={to appear in \emph{Geometric and Cohomological Group Theory}, London Math.\ Soc.\ Lecture Note Ser., Cambridge Univ.\ Press, available at \arXiv{1401.0357}},
}

\bib{Gromov}{article}{
   author={Gromov, Misha},
   title={Hyperbolic groups},
   conference={
      title={Essays in group theory},
   },
   book={
      series={Math. Sci. Res. Inst. Publ.},
      volume={8},
      publisher={Springer, New York},
   },
   date={1987},
   pages={75--263},
   review={\MR{919829}},
   doi={\DOI{10.1007/978-1-4613-9586-7_3}},
}

\bib{H-p-typical}{article}{
   author={Hesselholt, Lars},
   title={On the $p$-typical curves in Quillen's $K$-theory},
   journal={Acta Math.},
   volume={177},
   date={1996},
   number={1},
   pages={1--53},
   issn={0001-5962},
   review={\MR{1417085}},
   doi={\DOI{10.1007/BF02392597}},
}

\bib{H-non-comm}{article}{
   author={Hesselholt, Lars},
   title={Witt vectors of non-commutative rings and topological cyclic homology},
   journal={Acta Math.},
   volume={178},
   date={1997},
   number={1},
   pages={109--141},
   issn={0001-5962},
   review={\MR{1448712}},
   doi={\DOI{10.1007/BF02392710}},
}

\bib{H-survey}{article}{
   author={Hesselholt, Lars},
   title={$K$-theory of truncated polynomial algebras},
   conference={
      title={Handbook of $K$\=/theory. Vol.~1},
   },
   book={
      publisher={Springer},
      place={Berlin},
   },
   date={2005},
   pages={71--110},
   review={\MR{2181821}},
   note={\hurl[]{k-theory.org/handbook},
         DOI \DOI{10.1007/3-540-27855-9_3}},
}

\bib{H-S1}{article}{
   author={Hesselholt, Lars},
   title={On the Whitehead spectrum of the circle},
   conference={
      title={Algebraic topology},
   },
   book={
      series={Abel Symp.},
      volume={4},
      publisher={Springer, Berlin},
   },
   date={2009},
   pages={131--184},
   review={\MR{2597738}},
   DOI={\DOI{10.1007/978-3-642-01200-6_7}},
}

\bib{HM-top}{article}{
   author={Hesselholt, Lars},
   author={Madsen, Ib},
   title={On the $K$-theory of finite algebras over Witt vectors of perfect fields},
   journal={Topology},
   volume={36},
   date={1997},
   number={1},
   pages={29--101},
   issn={0040-9383},
   review={\MR{1410465}},
   doi={\DOI{10.1016/0040-9383(96)00003-1}},
}

\bib{HM-annals}{article}{
   author={Hesselholt, Lars},
   author={Madsen, Ib},
   title={On the $K$-theory of local fields},
   journal={Ann. of Math. (2)},
   volume={158},
   date={2003},
   number={1},
   pages={1--113},
   issn={0003-486X},
   review={\MR{1998478}},
   doi={\DOI{10.4007/annals.2003.158.1}},
}

\bib{HM-mixed}{article}{
   author={Hesselholt, Lars},
   author={Madsen, Ib},
   title={On the De Rham-Witt complex in mixed characteristic},
   journal={Ann. Sci. École Norm. Sup. (4)},
   volume={37},
   date={2004},
   number={1},
   pages={1--43},
   issn={0012-9593},
   review={\MR{2050204}},
   doi={\DOI{10.1016/j.ansens.2003.06.001}},
}

\bib{JPL}{article}{
   author={Juan-Pineda, Daniel},
   author={Leary, Ian J.},
   title={On classifying spaces for the family of virtually cyclic subgroups},
   conference={
      title={Recent developments in algebraic topology},
   },
   book={
      series={Contemp. Math.},
      volume={407},
      publisher={Amer. Math. Soc., Providence, RI},
   },
   date={2006},
   pages={135--145},
   review={\MR{2248975}},
   doi={\DOI{10.1090/conm/407/07674}},
}

\bib{L-type}{article}{
   author={Lück, Wolfgang},
   title={The type of the classifying space for a family of subgroups},
   journal={J. Pure Appl. Algebra},
   volume={149},
   date={2000},
   number={2},
   pages={177--203},
   issn={0022-4049},
   review={\MR{1757730}},
   doi={\DOI{10.1016/S0022-4049(98)90173-6}},
}

\bib{L-survey}{article}{
   author={Lück, Wolfgang},
   title={Survey on classifying spaces for families of subgroups},
   conference={
      title={Infinite groups: geometric, combinatorial and dynamical aspects},
   },
   book={
      series={Progr. Math.},
      volume={248},
      publisher={Birkhäuser, Basel},
   },
   date={2005},
   pages={269--322},
   review={\MR{2195456}},
   DOI={\DOI{10.1007/3-7643-7447-0_7}},
}

\bib{L-ICM}{article}{
   author={Lück, Wolfgang},
   title={$K$- and $L$-theory of group rings},
   conference={
      title={Proceedings of the International Congress of Mathematicians.
      Vol.~II},
   },
   book={
      publisher={Hindustan Book Agency, New Delhi},
   },
   date={2010},
   pages={1071--1098},
   review={\MR{2827832}},
}

\bib{LR-survey}{article}{
   author={Lück, Wolfgang},
   author={Reich, Holger},
   title={The Baum-Connes and the Farrell-Jones conjectures in $K$- and $L$-theory},
   conference={
      title={Handbook of $K$-theory, Vol.~2},
   },
   book={
      publisher={Springer},
      place={Berlin},
   },
   date={2005},
   pages={703--842},
   review={\MR{2181833}},
   note={\hurl[]{k-theory.org/handbook},
         DOI \DOI{10.1007/978-3-540-27855-9_15}},
}

\bib{kc}{article}{
   author={Lück, Wolfgang},
   author={Reich, Holger},
   author={Rognes, John},
   author={Varisco, Marco},
   title={Algebraic $K$-theory of group rings and the cyclotomic trace map},
   journal={Adv. Math.},
   volume={304},
   date={2017},
   pages={930--1020},
   issn={0001-8708},
   review={\MR{3558224}},
   doi={\DOI{10.1016/j.aim.2016.09.004}},
}

\bib{LRV}{article}{
   author={Lück, Wolfgang},
   author={Reich, Holger},
   author={Varisco, Marco},
   title={Commuting homotopy limits and smash products},
   journal={$K$-Theory},
   volume={30},
   date={2003},
   number={2},
   pages={137--165},
   issn={0920-3036},
   review={\MR{2064237}},
   doi={\DOI{10.1023/B:KTHE.0000018387.87156.c4}},
}

\bib{LRosenthal}{article}{
   author={Lück, Wolfgang},
   author={Rosenthal, David},
   title={On the $K$- and $L$-theory of hyperbolic and virtually finitely generated abelian groups},
   journal={Forum Math.},
   volume={26},
   date={2014},
   number={5},
   pages={1565--1609},
   issn={0933-7741},
   review={\MR{3334038}},
   doi={\DOI{10.1515/forum-2011-0146}},
}

\bib{LWeiermann}{article}{
   author={Lück, Wolfgang},
   author={Weiermann, Michael},
   title={On the classifying space of the family of virtually cyclic subgroups},
   journal={Pure Appl. Math. Q.},
   volume={8},
   date={2012},
   number={2},
   pages={497--555},
   issn={1558-8599},
   review={\MR{2900176}},
   doi={\DOI{10.4310/PAMQ.2012.v8.n2.a6}},
}

\bib{McCarthy-rel}{article}{
   author={McCarthy, Randy},
   title={Relative algebraic $K$-theory and topological cyclic homology},
   journal={Acta Math.},
   volume={179},
   date={1997},
   number={2},
   pages={197--222},
   issn={0001-5962},
   review={\MR{1607555}},
   doi={\DOI{10.1007/BF02392743}},
}

\bib{Meintrup-Schick}{article}{
   author={Meintrup, David},
   author={Schick, Thomas},
   title={A model for the universal space for proper actions of a hyperbolic group},
   journal={New York J. Math.},
   volume={8},
   date={2002},
   pages={1--7},
   issn={1076-9803},
   review={\MR{1887695}},
}

\bib{Mislin}{article}{
   author={Mislin, Guido},
   title={Classifying spaces for proper actions of mapping class groups},
   journal={Münster J. Math.},
   volume={3},
   date={2010},
   pages={263--272},
   issn={1867-5778},
   review={\MR{2775365}},
}

\bib{vPW}{article}{
   author={von Puttkamer, Timm},
   author={Wu, Xiaolei},
   title={Linear groups, conjugacy growth, and classifying spaces for families of subgroups},
   date={2017},
   status={preprint, available at \arXiv{1704.05304}},
}

\bib{Quinn-abelian}{article}{
   author={Quinn, Frank},
   title={Algebraic $K$-theory over virtually abelian groups},
   journal={J. Pure Appl. Algebra},
   volume={216},
   date={2012},
   number={1},
   pages={170--183},
   issn={0022-4049},
   review={\MR{2826431}},
   doi={\DOI{10.1016/j.jpaa.2011.06.001}},
}

\bib{Ramras}{article}{
   author={Ramras, Daniel A.},
   title={A note on orbit categories, classifying spaces, and generalized homotopy fixed points},
   date={2016},
   status={preprint, available at \arXiv{1507.06112v3}},
}

\bib{RV}{article}{
   author={Reich, Holger},
   author={Varisco, Marco},
   title={On the Adams isomorphism for equivariant orthogonal spectra},
   journal={Algebr. Geom. Topol.},
   volume={16},
   date={2016},
   number={3},
   pages={1493--1566},
   issn={1472-2747},
   review={\MR{3523048}},
   doi={\DOI{10.2140/agt.2016.16.1493}},
}

\bib{RV-survey}{article}{
   author={Reich, Holger},
   author={Varisco, Marco},
   title={Algebraic $K$-theory, assembly maps, controlled algebra, and trace methods},
   book={
      title={Space -- Time -- Matter. Analytic and Geometric Structures},
      editor={Staudacher, Matthias},
      editor={Brüning, Jochen},
      publisher={de~Gruyter},
   },
   date={2017},
   status={to appear, available at \arXiv{1702.02218}},
}

\end{biblist}
\end{bibdiv}


\vfill
\end{document}